\documentclass{article}
\usepackage{amsmath}
\usepackage{amssymb}
\usepackage{amsthm}
\usepackage{cite}
\usepackage{stmaryrd}
\SetSymbolFont{stmry}{bold}{U}{stmry}{m}{n} 
\usepackage{slashed} 
\usepackage{euscript}
\usepackage[arrow,curve,matrix,arc,2cell]{xy}
\usepackage[utf8]{inputenc}
\usepackage[unicode]{hyperref}
\usepackage{mathtools}
\usepackage{enumitem}
\DeclareMathOperator\Or{Or}
\DeclareMathOperator\pr{pr}
\begin{document}
\def\e#1\e{\begin{equation}#1\end{equation}}
\def\ea#1\ea{\begin{align}#1\end{align}}
\def\eq#1{{\rm(\ref{#1})}}
\theoremstyle{plain}
\newtheorem{thm}{Theorem}[section]
\newtheorem{lem}[thm]{Lemma}
\newtheorem{prop}[thm]{Proposition}
\newtheorem{cor}[thm]{Corollary}
\newtheorem{quest}[thm]{Question}
\theoremstyle{definition}
\newtheorem{dfn}[thm]{Definition}
\newtheorem{con}[thm]{Construction}
\newtheorem{ex}[thm]{Example}
\newtheorem{rem}[thm]{Remark}
\numberwithin{equation}{section}
\def\dim{\mathop{\rm dim}\nolimits}
\def\codim{\mathop{\rm codim}\nolimits}
\def\Im{\mathop{\rm Im}\nolimits}
\def\det{\mathop{\rm det}\nolimits}
\def\Ker{\mathop{\rm Ker}}
\def\Coker{\mathop{\rm Coker}}
\def\Flag{\mathop{\rm Flag}\nolimits}
\def\FlagSt{\mathop{\rm FlagSt}\nolimits}
\def\Iso{\mathop{\rm Iso}\nolimits}
\def\Aut{\mathop{\rm Aut}}
\def\End{\mathop{\rm End}\nolimits}
\def\GL{\mathop{\rm GL}}
\def\SL{\mathop{\rm SL}}
\def\SO{\mathop{\rm SO}}
\def\SU{\mathop{\rm SU}}
\def\Sp{\mathop{\rm Sp}}
\def\Spin{\mathop{\rm Spin}\nolimits}
\def\U{{\mathbin{\rm U}}}
\def\vol{\mathop{\rm vol}}
\def\inc{\mathop{\rm inc}}
\def\ind{{\rm ind}}
\def\rk{\mathop{\rm rk}}
\def\rank{\mathop{\rm rank}\nolimits}
\def\Hom{\mathop{\rm Hom}\nolimits}
\def\id{{\mathop{\rm id}\nolimits}}
\def\Id{{\mathop{\rm Id}\nolimits}}
\def\TopSta{{\mathop{\bf TopSta}\nolimits}}
\def\Ad{\mathop{\rm Ad}}
\def\irr{{\rm irr}}
\def\cs{{\rm cs}}
\def\Top{{\mathop{\bf Top}\nolimits}}
\def\ul{\underline}
\def\bs{\mathbf}
\def\ge{\geqslant}
\def\le{\leqslant\nobreak}
\def\boo{{\mathbin{\mathbf 1}}}
\def\O{{\mathcal O}}
\def\bA{{\mathbin{\mathbb A}}}
\def\bG{{\mathbin{\mathbb G}}}
\def\bL{{\mathbin{\mathbb L}}}
\def\P{{\mathbin{\mathbb P}}}
\def\bT{{\mathbin{\mathbb T}}}
\def\H{{\mathbin{\mathbb H}}}
\def\K{{\mathbin{\mathbb K}}}
\def\R{{\mathbin{\mathbb R}}}
\def\Z{{\mathbin{\mathbb Z}}}
\def\Q{{\mathbin{\mathbb Q}}}
\def\N{{\mathbin{\mathbb N}}}
\def\C{{\mathbin{\mathbb C}}}
\def\CP{{\mathbin{\mathbb{CP}}}}
\def\A{{\mathbin{\mathcal A}}}
\def\G{{\mathbin{\mathcal G}}}
\def\M{{\mathbin{\mathcal M}}}
\def\cB{{\mathbin{\mathcal B}}}
\def\cC{{\mathbin{\mathcal C}}}
\def\cD{{\mathbin{\mathcal D}}}
\def\cE{{\mathbin{\mathcal E}}}
\def\cF{{\mathbin{\mathcal F}}}
\def\cG{{\mathbin{\mathcal G}}}
\def\cH{{\mathbin{\mathcal H}}}
\def\E{{\mathbin{\mathcal E}}}
\def\F{{\mathbin{\mathcal F}}}
\def\cG{{\mathbin{\mathcal G}}}
\def\cH{{\mathbin{\mathcal H}}}
\def\cI{{\mathbin{\mathcal I}}}
\def\cJ{{\mathbin{\mathcal J}}}
\def\cK{{\mathbin{\mathcal K}}}
\def\cL{{\mathbin{\mathcal L}}}
\def\cN{{\mathbin{\mathcal N}\kern .04em}}
\def\cP{{\mathbin{\mathcal P}}}
\def\cQ{{\mathbin{\mathcal Q}}}
\def\cR{{\mathbin{\mathcal R}}}
\def\cS{{\mathbin{\mathcal S}}}
\def\T{{{\mathcal T}\kern .04em}}
\def\cW{{\mathbin{\mathcal W}}}
\def\cX{{\mathcal X}}
\def\cY{{\mathcal Y}}
\def\cZ{{\mathcal Z}}
\def\cV{{\mathcal V}}
\def\cW{{\mathcal W}}
\def\g{{\mathfrak g}}
\def\al{\alpha}
\def\be{\beta}
\def\ga{\gamma}
\def\de{\delta}
\def\io{\iota}
\def\ep{\epsilon}
\def\la{\lambda}
\def\ka{\kappa}
\def\th{\theta}
\def\ze{\zeta}
\def\up{\upsilon}
\def\vp{\varphi}
\def\si{\sigma}
\def\om{\omega}
\def\De{\Delta}
\def\La{\Lambda}
\def\Om{\Omega}
\def\Ga{\Gamma}
\def\Si{\Sigma}
\def\Th{\Theta}
\def\Up{\Upsilon}
\def\Chi{{\rm X}}
\def\Tau{{T}}
\def\Nu{{\rm N}}
\def\pd{\partial}
\def\ts{\textstyle}
\def\st{\scriptstyle}
\def\sst{\scriptscriptstyle}
\def\w{\wedge}
\def\sm{\setminus}
\def\lt{\ltimes}
\def\bu{\bullet}
\def\sh{\sharp}
\def\di{\diamond}
\def\he{\heartsuit}
\def\op{\oplus}
\def\od{\odot}
\def\op{\oplus}
\def\ot{\otimes}
\def\bt{\boxtimes}
\def\ov{\overline}
\def\bigop{\bigoplus}
\def\bigot{\bigotimes}
\def\iy{\infty}
\def\es{\emptyset}
\def\ra{\rightarrow}
\def\rra{\rightrightarrows}
\def\Ra{\Rightarrow}
\def\Longra{\Longrightarrow}
\def\ab{\allowbreak}
\def\longra{\longrightarrow}
\def\hookra{\hookrightarrow}
\def\dashra{\dashrightarrow}
\def\lb{\llbracket}
\def\rb{\rrbracket}
\def\ha{{\ts\frac{1}{2}}}
\def\t{\times}
\def\ci{\circ}
\def\ti{\tilde}
\def\d{{\rm d}}
\def\md#1{\vert #1 \vert}
\def\ms#1{\vert #1 \vert^2}
\def\bmd#1{\big\vert #1 \big\vert}
\def\bms#1{\big\vert #1 \big\vert^2}
\def\an#1{\langle #1 \rangle}
\def\ban#1{\bigl\langle #1 \bigr\rangle}
\def\o{\operatorname{o}}
\def\D{\mathbin{\slashed{\operatorname{D}}}}
\def\sS{\slashed{\mathrm{S}}}
\def\O{\mathcal{O}}
\def\Lambdatop{\Lambda^\mathrm{top}}
\title{Canonical orientations for moduli spaces of $G_2$-instantons with gauge group $\SU(m)$ or $\U(m)$}
\author{Dominic Joyce and Markus Upmeier}
\date{}
\maketitle

\begin{abstract} Suppose
$(X,g)$ is a compact,
spin Riemannian 7-manifold,
with Dirac operator
$\D^g:C^\iy(X,\sS)\ra C^\iy(X,\sS).$
Let $G$ be $\SU(m)$ or $\U(m),$ and $E\ra X$
be a rank $m$ complex bundle with $G$-structure.
Write $\cB_E$ for the infinite-dimensional moduli
space of connections on $E,$ modulo gauge.
There is a natural principal $\Z_2$-bundle
$O^{\D^{\smash{g}}}_E\ra\cB_E$ parametrizing
orientations of $\det\D^g_{\Ad A}$
for twisted elliptic operators
$\D^g_{\Ad A}$ at each $[A]$ in $\cB_E.$
A theorem of Walpuski \cite{Walp2} shows $O^{\D^{\smash{g}}}_E$ is trivializable.

We prove that if we choose an orientation for $\det\D^g,$ and a flag structure on $X$ in the sense of \cite{Joyc3}, then we can define canonical trivializations of $O^{\D^{\smash{g}}}_E$ for all such bundles $E\ra X,$ satisfying natural compatibilities.

Now let $(X,\vp,g)$ be a compact $G_2$-manifold, with $\d(*\vp)=0.$ Then we can consider moduli spaces $\M_E^{G_2}$ of $G_2$-instantons on $E\ra X,$ which are smooth manifolds under suitable transversality conditions, and derived manifolds in general, with $\M_E^{G_2}\subset\cB_E.$ The restriction of $O^{\D^{\smash{g}}}_E$ to $\M_E^{G_2}$ is the $\Z_2$-bundle of orientations on $\M_E^{G_2}.$ Thus, our theorem induces canonical orientations on all such $G_2$-instanton moduli spaces~$\M_E^{G_2}.$

This contributes to the Donaldson--Segal programme \cite{DoSe}, which proposes defining enumerative invariants of $G_2$-manifolds $(X,\vp,g)$ by counting moduli spaces $\M_E^{G_2},$ with signs depending on a choice of orientation.
\end{abstract}

\setcounter{tocdepth}{1}
\tableofcontents

\section{Introduction}
\label{section.1}

This is the third of six papers: Upmeier \cite{Upme}, Joyce--Tanaka--Upmeier \cite{JTU}, this paper, Cao--Gross--Joyce \cite{CGJ}, and the authors \cite{JoUp1,JoUp2}, on orientability, canonical orientations, and spin structures, for gauge-theoretic moduli spaces. 

The first \cite{Upme} proves the Excision Theorem (see Theorem \ref{thm.2.15} below), which relates orientations on different moduli spaces. The second \cite{JTU} develops the general theory of orientations of moduli spaces, and applies it in dimensions 3,4,5 and 6. This paper studies orientations of moduli spaces in dimension 7. It uses results from \cite{JTU,Upme}, but is self-contained and can be read independently. The sequel \cite{CGJ} concerns dimension~8.

Let $X$ be a compact connected manifold, $G$ be $\SU(m)$ or $\U(m)$ for $m\ge 1,$ and $\g$ be the Lie algebra of $G,$ and $E\to X$ be a rank $m$ complex vector bundle with a $G$-structure, so that $E$ is associated to a principal $G$-bundle $Q \to X$ in the vector representation. Let $\Ad E$ be the associated bundle of Lie algebras, the bundle of skew-Hermitian endomorphisms of $E,$ trace-free if~$G=\SU(m).$

\begin{dfn} $\A_E\subset \Omega^1(Q,\g)$ is the \emph{space of connections} on $Q,$ equipped with its affine Fr\'echet structure modelled on $\Omega^1(X,\Ad E).$ The \emph{gauge group} $\cG=\Aut(Q)$ acts continuously on $\A_E$ by pullback. The quotient space $\cB_E \coloneqq \A_E / \cG$ is the \emph{moduli space of connections} on $E,$ as a topological space with the quotient topology. As in \cite[p.~133]{DoKr}, a connection $\nabla \in \A_E$ is \emph{irreducible} if the stabilizer group of $\nabla$ under the $\G$-action on $\A_E$ equals the centre $Z(G).$ Write $\A_E^\irr\subset\A_E$ for the subset of irreducible connections, and $\cB_E^\irr =\A_E^\irr/\G\subset\cB_E$ for the moduli space of irreducible connections.
\end{dfn}

Suppose now that $(X,g)$ is an odd-dimensional compact Riemannian spin manifold with real spinor bundle $\sS\ra X.$ The real Dirac operator coupled to the induced connections on $\Ad E$
defines a family of self-adjoint elliptic operators
\begin{equation}
\label{equation.1.1}
\D^g_{\Ad A} \colon C^\infty(X, \sS\otimes_\R \Ad E) \longra C^\infty(X, \sS\otimes_\R \Ad E),
\qquad \forall A \in \A_E.
\end{equation}

Let $\det \D^g_{\Ad E}$ be the determinant line bundle of this family, a real line bundle over $\A_E,$ and let ${\bar O^{\D^{\smash{g}}}_E\coloneqq \bigl(\det \D^g_{\Ad E}\setminus\{\text{zero section}\}\bigr)\big/{\R_{>0}}}$ be the associated orientation double cover, a principal $\Z_2$-bundle $\bar O^{\D^{\smash{g}}}_E\ra \A_E,$ where $\Z_2=\{\pm 1\}.$ As $\A_E$ is contractible, $\bar O^{\D^{\smash{g}}}_E$ is trivializable, and we have two possible orientations.

For $X$ a compact spin $7$-manifold and $G=\SU(m)$ the argument of Walpuski in \cite[Prop.~6.3]{Walp2} shows that the gauge group acts trivially on the set of trivializations of $\bar O^{\D^{\smash{g}}}_E,$ and \cite[Ex.~2.13]{JTU} implies that this also holds for $G=\U(m).$ Hence $\bar O^{\D^{\smash{g}}}_E$ descends to a principal $\Z_2$-bundle $O^{\D^{\smash{g}}}_E\ra\cB_E,$ and orientations may be constructed equivalently over $\A_E$ or $\cB_E.$ See \cite{JTU} for more details. 

We define a $G_2$-{\it manifold\/} $(X,\vp,g)$ to be a 7-manifold $X$ with a $G_2$-structure $(\vp,g),$ not necessarily torsion-free. (This differs from \cite[\S 10--\S 12]{Joyc1}, where $(\vp,g)$ was supposed torsion-free.) Suppose $(X,\vp,g)$ is a compact $G_2$-manifold with $\d(*\vp)=0.$ As in Donaldson--Thomas \cite{DoTh} and Donaldson--Segal \cite{DoSe}, a connection $A$ on $E$ is called a $G_2$-{\it instanton\/} if its curvature $F_A$ satisfies
\begin{equation*}
F_A\w *\vp=0. 
\end{equation*}
As $\d(*\vp)=0$ the deformation theory of $G_2$-instantons is elliptic, and therefore the moduli space $\M_E^{G_2}$ of irreducible $G_2$-instantons on $E$ modulo gauge is a smooth manifold (of dimension 0) under suitable transversality assumptions, and a derived manifold (of virtual dimension 0) in the sense of \cite{Joyc2,Joyc4,Joyc5,Joyc6} in the general case. Examples and constructions of $G_2$-instantons on compact $G_2$-manifolds are given in~\cite{MNS,SaEa,SaWa,Walp1,Walp2,Walp3}. 

As in \cite[\S 4.1]{JTU}, the restriction of 
$O^{\D^{\smash{g}}}_E\ra\cB_E$ to $\M_E^{G_2}\subset\cB_E$ is the principal $\Z_2$-bundle of orientations of $\M_E^{G_2},$ as a (derived) manifold. Thus $\M_E^{G_2}$ is orientable, and an orientation of $O^{\D^{\smash{g}}}_E\ra\cB_E$ determines an orientation of $\M_E^{G_2}.$ Such orientations are important for the programme of~\cite{DoTh,DoSe}.
\medskip

In the present paper, we solve the problem of defining canonical orientations for $\M_E^{G_2}.$ As for moduli spaces of anti-self-dual instantons in dimension four, where orientations depend on an orientation of $H^0(X)\op  H^1(X)\op H^2_+(X)$ (see Donaldson \cite{Dona} and Donaldson--Kronheimer \cite[Prop.~7.1.39]{DoKr}), this will depend on some additional algebro-topological data, a so-called {\it flag structure\/}~\cite[\S 3.1]{Joyc3}.

Oversimplifying a bit, a flag structure $F$ on a 7-manifold $X$ assigns $F(Y,s)\ab=\pm 1$ to each compact 3-submanifold $Y\subset X$ with a nonvanishing section $s$ of the normal bundle $N_Y\ra Y,$ such that if $s,s'$ have winding number $d(s,s')\in\Z$ then $F(Y,s')=(-1)^{d(s,s')}F(Y,s),$ and if $C\subset X\t[0,1]$ is an compact 4-submanifold with nonvanishing normal section $t$ and boundary $\pd(C,t)=(Y_0\t\{0\},s_0)\amalg(Y_1\t\{1\},s_1)$ then $F(Y_0,s_0)=F(Y_1,s_1).$ See \S\ref{section.3} for more details.

When $(X,\vp,g)$ is a compact $G_2$-manifold one can define an interesting class of minimal $3$-submanifolds $Y$ in $X$ called {\it associative\/ $3$-folds\/} \cite[\S 10.8]{Joyc1}. Compact associative 3-folds have elliptic deformation theory, and form well-behaved moduli spaces $\M^{\rm ass},$ as (derived) manifolds. In the spirit of \cite{DoTh,DoSe}, the first author \cite{Joyc3} discussed defining enumerative invariants of $(X,\vp,g)$ counting associative 3-folds. To determine signs, he defined canonical orientations on moduli spaces
$\M^{\rm ass},$ using the new idea of flag structures. 

Now Donaldson and Segal \cite{DoSe} (see also Walpuski \cite{Walp3}) explain that associative $3$-folds are connected to $G_2$-instantons, as a sequence of $G_2$-instantons $(E,A_i)_{i=1}^\iy$ can `bubble' along an associative $3$-fold $Y$ as $i\ra\iy.$ So the problems of defining canonical orientations on moduli spaces of associative $3$-folds and of $G_2$-instantons should be related. In \cite[Conj.~8.3]{Joyc3}, the first author conjectured that one should define canonical orientations for moduli spaces of $G_2$-instantons using flag structures. This paper proves that conjecture.

We make heavy use of ideas and results from the previous paper \cite{JTU}, recalled in Section \ref{section.3}. Given the $O^{\D^{\smash{g}}}_E\ra\cB_E$ are orientable, \cite[Th.~2.27]{JTU} gives a way to choose orientations on all $O^{\D^{\smash{g}}}_E$ and $\M_E^{G_2}$ after making finitely many {\it algebraic\/} choices. But here we do something different: we construct orientations on all $O^{\D^{\smash{g}}}_E$ and $\M_E^{G_2}$ depending on a {\it geometric structure\/} on $X,$ the flag structure. We use a general procedure for doing this using excision outlined in \cite[\S 3.3]{JTU}. 
\medskip

In \eqref{equation.2.2} we define the {\it orientation\/ $\Z_2$-torsor\/} $\Or_E$
of a $\SU(m)$-bundle $E.$ Up to an orientation for the untwisted Diracian, this is the set of orientations on the determinant line bundle of \eqref{equation.1.1}. For a $\SU(m_1)$-bundle $E_1 \to X$ and $\SU(m_2)$-bundle $E_2\to X$ we have canonical isomorphisms (Proposition~\ref{prop.2.14})
\begin{align}
\Or_{E_1\oplus E_2} &\cong \Or_{E_1} \otimes_{\Z_2} \Or_{E_2}, 
\label{equation.1.2}\\
\Or_{\underline\C^m} &\cong \Z_2, 
\label{equation.1.3}
\end{align}
where \eqref{equation.1.3} corresponds to the `standard orientations' of~\cite[\S 2.2.2]{JTU}.

Here is our main result. The proof is sketched below.

\begin{thm}
\label{thm.1.2}
A flag structure\/ $F$ on a compact spin\/ $7$-manifold\/ $X$ determines, for every\/ $\SU(m)$-bundle\/ $E \to X$ and\/ $m\in \N,$ a canonical orientation
\e
\label{equation.1.4}
o^F(E) \in \Or_E
\e
satisfying the following axioms, by which\/ $o^F(E)$ is uniquely determined:
\begin{itemize}
\setlength{\itemsep}{0pt}
\setlength{\parsep}{0pt}
\item[{\bf(a)}]\textup{(Normalization.)}~For $E=\underline\C^m$ trivial, let\/ $o^\mathrm{flat}(E) \in \Or_E$
be the image of\/ $1 \in \Z_2$ under the isomorphism \eqref{equation.1.3}. Then
\e
\label{equation.1.5}
o^{F}(E) = o^\mathrm{flat}(E).
\e
\item[{\bf(b)}]\textup{(Stabilization.)}~Under the isomorphism\/ $\Or_{E\oplus \underline\C} \cong \Or_E \otimes_{\Z_2} \Or_{\underline\C} \cong \Or_E,$ using \eqref{equation.1.2} and\/ {\rm\eqref{equation.1.3},} we have
\e
\label{equation.1.6}
o^{F}(E\oplus \underline\C) = o^{F}(E).
\e
\item[{\bf(c)}]\textup{(Excision.)}~Let\/ $E^\pm \to X^\pm$ be\/ $\SU(m)$-bundles over a pair of compact spin\/ $7$\nobreakdash-manifolds with flag structures\/ $F^\pm.$ Let\/ $\rho^\pm$ be\/ $\SU(m)$-frames of\/ $E^\pm$ outside compact subsets of open\/ $U^\pm \subset X^\pm.$ Let\/ ${\Phi\colon E^+|_{U^+} \to \phi^*(E^-|_{U^-})}$ be a\/ $\SU(m)$\nobreakdash-isomorphism covering a spin diffeomorphism\/ ${\phi \colon U^+ \to U^-}.$ Assume\/ $\Phi\circ \rho^+ = \phi^*\rho^-$ outside a compact subset of\/ $U^+.$ Under the excision isomorphism of Theorem~\textup{\ref{thm.2.15}} we then have
\e
\label{equation.1.7}
\!\!\!\!\!\!\!\!\!\!\!\Or(\phi,\Phi,\rho^+,\rho^-) \bigl( o^{F^+}(E^+) \bigr) \!=\! \bigl(F^+\vert_{U^+}/\phi^*(F^-\vert_{U^-})\bigr)(\al^+)\cdot o^{F^-}(E^-),
\e
where\/ $\al^+ \in H_3(U^+;\Z)$ is the homology class Poincar\'e dual to the relative Chern class\/ $c_2(E^+|_{U^+},\rho^+) \in H^4_\mathrm{cpt}(U^+;\Z).$
\end{itemize}
Moreover, the following additional properties hold:
\begin{itemize}
\setlength{\itemsep}{0pt}
\setlength{\parsep}{0pt}
\item[{\bf(i)}]\textup{(Direct sums.)}~Let\/ $E_1 \to X$ be a\/ $\SU(m_1)$-bundle and\/ $E_2 \to X$ a\/ $\SU(m_2)$-bundle.
Under the isomorphism \eqref{equation.1.2} we then have
\e
\label{equation.1.8}
o^F(E_1\oplus E_2) = o^F(E_1) \otimes o^F(E_2).
\e
\item[{\bf(ii)}]\textup{(Families.)}~Let\/ $P$ be a compact Hausdorff space,\/ $X$ a compact spin\/ $7$-manifold, and\/ $E \to X \times P$ a\/ $\SU(m)$-bundle. The union of all torsors\/ $\Or(E|_{X\times \{p\}})$ for each\/ $p\in P$ is
a double cover of\/ $P,$ of which the map\/ ${p\mapsto o^F(E|_{X\times \{p\}})}$ defines a continuous section. In particular, canonical orientations are deformation invariant.
\end{itemize}

Now let\/ $E\ra X$ be a rank\/ $m$ complex vector bundle with\/ $\U(m)$-structure. Then\/ $\ti E=E\op\La^mE^*$ is a rank\/ $m+1$ complex vector bundle with\/ $\SU(m+1)$-structure, and\/ {\rm\cite[Ex.~2.13]{JTU}} defines a canonical isomorphism of\/ $\Z_2$-torsors\/ $\Or_E\cong\Or_{\smash{\ti E}}.$ Hence the first part gives canonical orientations\/ $o^F(E) \in \Or_E$ for all\/ $\U(m)$-bundles\/ $E\ra X.$ These satisfy the analogues of\/ {\bf(a)\rm--\bf(c)} and\/ {\bf\bf(ii)\rm,} but may not satisfy\/~{\bf(i)}.
\end{thm}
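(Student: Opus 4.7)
The approach is the standard excision-based scheme of \cite[\S 3.3]{JTU}: fix a basepoint orientation on the trivial bundle, transport it to a general $E$ via a chosen near-trivialization together with the Excision Theorem~\ref{thm.2.15}, and then verify that the transport is unambiguous. The novelty here is that the flag structure $F$ is precisely the extra datum needed to make the transport well-defined.

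\textbf{Uniqueness.} Given $E\to X$, obstruction theory (using that $\SU(m)$ is $2$-connected, so the first nontrivial obstruction is $c_2\in H^4(X;\Z)$) produces an $\SU(m)$-frame $\rho$ of $E$ over the complement of some compact subset $K\subset X$. Applying axiom (c) to the configuration $(X^+,F^+,E^+,\rho^+)=(X,F,E,\rho)$, $(X^-,F^-,E^-,\rho^-)=(X,F,\underline\C^m,\rho^\mathrm{flat})$ with $\phi=\id$, the flag-structure factor $\bigl(F^+\vert_{U^+}/\phi^*(F^-\vert_{U^-})\bigr)(\al^+)$ is trivial since $F^+=F^-$, so the formula reduces to
\[
o^F(E) \;=\; \Or(\id,\Phi,\rho,\rho^\mathrm{flat})^{-1}\bigl(o^\mathrm{flat}(\underline\C^m)\bigr),
\]
which together with axiom (a) determines $o^F(E)$ uniquely.

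\textbf{Existence.} Take this formula as the definition of $o^F(E)$. The heart of the proof is independence of the choice of frame $\rho$. Two frames $\rho_0,\rho_1$ differ by an $\SU(m)$-gauge transformation $g = \rho_1\ci\rho_0^{-1}$ on the overlap of their domains, and a direct comparison using two invocations of Theorem~\ref{thm.2.15} shows that the ambiguity in the resulting orientation is captured by $F(\al)$, where $\al\in H_3(X;\Z)$ is the Poincar\'e dual of $c_2(E,\rho_0)-c_2(E,\rho_1)\in H^4_\mathrm{cpt}(X;\Z)$. The geometric content of a flag structure \cite[\S 3]{Joyc3} is precisely a $\Z_2$-valued assignment on $3$-classes that cancels this discrepancy under a change of trivialization, so $o^F(E)$ is well-defined. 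I expect this to be the main obstacle: translating the abstract excision ambiguity into the exact $\Z_2$-pairing on $H_3(X;\Z)$ that a flag structure evaluates requires careful bookkeeping of how relative Chern classes and excision isomorphisms interact under composition.

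\textbf{Remaining assertions.} Axioms (a) and (b) follow by choosing $\rho=\rho^\mathrm{flat}$ (resp.\ $\rho\op\rho^\mathrm{flat}$) and using the vanishing of the relative $c_2$; axiom (c) follows from the cocycle behaviour of excision together with the additivity of both $c_2$ and $F$ under the cut-and-paste of Theorem~\ref{thm.2.15}. Property (i) follows from $c_2(E_1\op E_2)=c_2(E_1)+c_2(E_2)$ applied with a direct-sum frame $\rho_1\op\rho_2$, combined with the multiplicativity \eqref{equation.1.2}; property (ii) follows from continuity of the excision formula in its inputs, local triviality of $E\to X\times P$, and the locally constant nature of $F$. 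For the $\U(m)$-case, transport the construction through the canonical isomorphism $\Or_E\cong\Or_{\smash{\ti E}}$ with $\ti E = E\op\La^m E^*$: axioms (a)--(c) and (ii) transfer because the passage $E\mapsto\ti E$ is functorial and compatible with trivializations and excision, but (i) fails because $\widetilde{E_1\op E_2}$ and $\ti E_1\op\ti E_2$ have different ranks and different top-exterior-power summands, so \eqref{equation.1.2} cannot be invoked coherently.
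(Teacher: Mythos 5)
Your proposed construction does not work for a nontrivial bundle $E$, and this is a fatal gap, not a matter of filling in details. You take $X^+=X^-=X$, $\phi=\id$, $E^-=\underline\C^m$, and invoke Theorem~\ref{thm.2.15} to get $\Or(\id,\Phi,\rho,\rho^\mathrm{flat})$. But the hypothesis of Theorem~\ref{thm.2.15} demands an $\SU(m)$-isomorphism $\Phi\colon E|_{U^+}\to\underline\C^m|_{U^+}$ over $\id$ with $\Phi\circ\rho=\rho^\mathrm{flat}$ outside a compact subset of $U^+$. If such a $\Phi$ existed, you could glue $\Phi^{-1}\circ\rho^\mathrm{flat}$ over $U^+$ with $\rho$ over $X\setminus L$ (the two agree near the overlap) to produce a global $\SU(m)$-trivialization of $E$; equivalently, $c_2(E|_{U^+},\rho)\in H^4_\mathrm{cpt}(U^+;\Z)$ would vanish, hence $c_2(E)=0$. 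So your formula $o^F(E)=\Or(\id,\Phi,\rho,\rho^\mathrm{flat})^{-1}(o^\mathrm{flat}(\underline\C^m))$ is only defined when $E$ is trivial, and the same objection voids your uniqueness argument. The point of excision here is precisely that it can move you to a \emph{different} $7$-manifold: the paper excises from $E\to X$ to a bundle $E'\to\cS^7$ via a spin diffeomorphism $\psi\colon U\to U'$ between a tubular neighbourhood $U$ of the degeneracy $3$-fold $Y\subset X$ and one $U'\subset\cS^7$; since $\pi_6\SU(m)=0$ for $m\ge 4$, $E'$ is stably trivial on $\cS^7$ and so carries $o^\mathrm{flat}(E')$. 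No such manoeuvre is available staying within $X$ with $\phi=\id$.

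A secondary issue: you locate the role of the flag structure in the dependence on the frame $\rho$. In the paper's scheme the flag structure is primarily what compensates for the choice of the normal-bundle identification $\Psi\colon N_Y\to\io^*(N_{Y'})$ (equivalently of the germ of $\psi$ along $Y$). Two choices $\Psi_0,\Psi_1$ give spin diffeomorphisms $\psi_0,\psi_1\colon U\to U'$, and the discrepancy between the corresponding excision isomorphisms is a sign computed via a mapping torus and a self-intersection number $[Y\times\cS^1]\bullet[Y\times\cS^1]$ (Propositions~\ref{prop.2.18}, \ref{prop.2.19}, Example~\ref{ex.2.20}); Proposition~\ref{prop.3.6} shows the flag factor $F/\psi^*F$ produces exactly the same sign, so the two cancel in~\eqref{equation.1.9}. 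Independence of $Y,\rho$ is a separate, substantial bordism argument using Wall's embedding theorem~\ref{thm.4.6}. Your sketch flags ``careful bookkeeping'' as the expected obstacle but supplies none of it, and in any case that bookkeeping would have to be carried out inside a construction that, as written, is empty for $c_2(E)\ne 0$.
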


\begin{rem} The problem with extending (i) to $\U(m)$-bundles in the last part, is that if $E_1,E_2\ra X$ are $\U(m_1)$- and $\U(m_2)$-bundles then the left hand side of \eqref{equation.1.8} comes from the orientation for the $\SU(m_1+m_2+1)$-bundle $(E_1\op E_2)\op\La^{m_1+m_2}(E_1\op E_2)^*,$ but the right hand side comes from the orientation for the $\SU(m_1+m_2+2)$-bundle $(E_1\op\La^{m_1}E_1^*)\op(E_2\op\La^{m_2}E_2^*),$ which is different.

The orientations $o^F(E)$ for $\U(m)$-bundles defined in the last part may not satisfy (i). For example, let $X=\CP^3\t\cS^1,$ which has two flag structures $F^+,F^-,$ and take $E_1=\pi_{\CP^3}^*(\O(k))$ and $E_2=\pi_{\CP^3}^*(\O(l))$ for $k,l\in\Z$ odd. Using \eqref{equation.1.7} we find that changing from $F^+$ to $F^-$ changes the sign of all three of $o^{F^\pm}(E_1),o^{F^\pm}(E_2),o^{F^\pm}(E_1\oplus E_2),$ so \eqref{equation.1.8} holds for only one of~$F^+,F^-.$
 
It may still be possible to choose orientations $o^F(E)$ for all $\U(m)$-bundles $E\ra X$ satisfying (a),(b),(i),(ii), and perhaps (c), by a different method.
\end{rem}

One application of this theorem is to the problem of defining orientations for moduli spaces of $G_2$-instantons $\M_E^{G_2}.$ As the moduli space is zero-dimensional, there are many arbitrary orientations, so the point of the problem is to come up with a natural assignment, in particular one that is stable under deformations of the $G_2$-structure. Following \cite[\S 4.1]{JTU}, we have already explained how Walpuski \cite[Prop.~6.3]{Walp2} and Theorem~\ref{thm.1.2} imply the following:

\begin{cor}
\label{cor.1.4}
Let\/ $(X,\vp,g)$ be a compact\/ $G_2$-manifold with\/ $\d(*\vp)=0,$ and choose an orientation of\/ $\det \D^g$ for the untwisted Diracian and a flag structure\/ $F$ on $X.$ Then we can define a canonical orientation for the moduli space\/ $\M_E^{G_2}$ of\/ $G_2$-instantons on\/ $X$ whenever\/ $E\ra X$ is a\/ $\SU(m)$- or\/ $\U(m)$-bundle.
\end{cor}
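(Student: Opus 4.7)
The plan is to combine Theorem~\ref{thm.1.2} with two ingredients already set up in the paper: first, the descent of $\bar O^{\D^{\smash{g}}}_E\ra\A_E$ to a principal $\Z_2$-bundle $O^{\D^{\smash{g}}}_E\ra\cB_E$ via Walpuski's theorem \cite[Prop.~6.3]{Walp2} and its $\U(m)$-extension \cite[Ex.~2.13]{JTU}; and second, the identification from \cite[\S 4.1]{JTU} of the restriction $O^{\D^{\smash{g}}}_E|_{\M_E^{G_2}}$ with the orientation $\Z_2$-bundle of the (derived) moduli space $\M_E^{G_2}$. Granted these, defining a canonical orientation of $\M_E^{G_2}$ reduces to producing a canonical trivialization of $O^{\D^{\smash{g}}}_E\ra\cB_E$.

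First I would fix the chosen orientation of the untwisted Dirac determinant $\det\D^g$. By the definition \eqref{equation.2.2} of the $\Z_2$-torsor $\Or_E$, this choice promotes an element of $\Or_E$ to an honest orientation of $\det\D^g_{\Ad E}$, i.e.\ a trivialization of $\bar O^{\D^{\smash{g}}}_E\ra\A_E$. Feeding in the canonical $o^F(E)\in\Or_E$ supplied by Theorem~\ref{thm.1.2} then yields a canonical trivialization of $\bar O^{\D^{\smash{g}}}_E$, which by the Walpuski/JTU descent passes to a canonical trivialization of $O^{\D^{\smash{g}}}_E\ra\cB_E$, and restriction to $\M_E^{G_2}\subset\cB_E$ gives the desired canonical orientation. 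Deformation invariance under variation of $\vp$ (keeping $F$ and the orientation of $\det\D^g$ fixed) follows from Theorem~\ref{thm.1.2}(ii). For a $\U(m)$-bundle $E$, I would invoke the last part of Theorem~\ref{thm.1.2} to transfer $o^F(\ti E)$ via the canonical isomorphism $\Or_E\cong\Or_{\smash{\ti E}}$ of \cite[Ex.~2.13]{JTU} with $\ti E=E\op\La^m E^*$, and then run the same descent and restriction argument.

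The main obstacle is Theorem~\ref{thm.1.2} itself, whose proof occupies the later sections of the paper and relies on the flag-structure machinery of \cite{Joyc3} and the excision theorem of \cite{Upme}; once that theorem is granted, the corollary is essentially a tautology in $\Z_2$-torsors. The only point requiring care is that each link in the chain --- the identification of $\Or_E$ with trivializations of $\bar O^{\D^{\smash{g}}}_E$, the gauge-descent to $\cB_E$, the restriction to $\M_E^{G_2}$, and (in the $\U(m)$ case) the isomorphism $\Or_E\cong\Or_{\smash{\ti E}}$ --- is equivariant for the $\Z_2$-torsor structure and uses the chosen orientation of $\det\D^g$ consistently, which is ensured by the constructions of~\cite[\S 2, \S 4.1]{JTU}.
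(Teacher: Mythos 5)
Your proposal is correct and matches the paper's own (very brief) justification: the paper states that Corollary~\ref{cor.1.4} follows directly from Walpuski's descent result, the identification in \cite[\S 4.1]{JTU} of $O^{\D^{\smash{g}}}_E|_{\M_E^{G_2}}$ with the orientation bundle of $\M_E^{G_2}$, and Theorem~\ref{thm.1.2}, which is exactly the chain you spell out. The only caveat is that your remark on deformation invariance under varying $\vp$ is a slight overreach: Theorem~\ref{thm.1.2}(ii) concerns families of bundles $E\to X\times P$, and the metric-independence of $\Or_E$ is instead the content of Lemma~\ref{lem.2.8} as noted after Definition~\ref{dfn.2.9}; but this does not affect the correctness of the corollary itself.
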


Donaldson and Segal \cite{DoSe} propose defining enumerative invariants of $(X,\vp,g)$ by counting $\M_E^{G_2},$ {\it with signs}, and adding correction terms from associative 3-folds in $X.$ To determine the signs we need an orientation of $\M_E^{G_2}.$ Thus, Corollary \ref{cor.1.4} contributes to the Donaldson--Segal programme.

It is natural to want to extend Theorem \ref{thm.1.2} and Corollary \ref{cor.1.4} to moduli spaces of connections on principal $G$-bundles $Q\ra X$ for Lie groups $G$ other than $\SU(m)$ and $\U(m),$ but this is not always possible. Section \ref{subsection.2.4} gives an example of a compact, spin 7-manifold $X$ for which $O^{\D^{\smash{g}}}_Q\ra\cB_Q$ is not orientable when $Q=X\t\Sp(m)\ra X$ is the trivial $\Sp(m)$-bundle, for all~$m\ge 2.$

In the sequels \cite{JoUp1,JoUp2} we use Theorem \ref{thm.1.2} to construct `spin structures' on moduli spaces $\cB_P$ for principal $\U(m)$- or $\SU(m)$-bundles $P\ra X$ over a compact spin 6-manifold $X,$ and apply this to construct `orientation data' for Calabi--Yau 3-folds $X,$ as in Kontsevich and Soibelman \cite[\S 5]{KoSo}, solving a long-standing problem in Donaldson--Thomas theory.

\subsubsection*{Outline of the paper}

We begin in \S\ref{section.2} by recalling background material on determinant line bundles. Then our main object of study, the orientation torsor $\Or_E$ of a $\SU(m)$-bundle $E\to X,$ is introduced along with its basic properties. We recall from \cite{Upme} the excision technique from index theory in the context of orientations. It can be regarded as extending the functoriality of orientation torsors from globally defined isomorphisms to local ones. Section \ref{section.3} briefly recalls flag structures, and \S\ref{section.4} proves Theorem \ref{thm.1.2}. In brief, the proof works as follows:
\begin{itemize}
\setlength{\itemsep}{0pt}
\setlength{\parsep}{0pt}
\item[(A)] Let $X$ be a compact spin 7-manifold with flag structure $F,$ and $E\ra X$ a $\SU(m)$-bundle. We show that we can find:
\begin{itemize}
\setlength{\itemsep}{0pt}
\setlength{\parsep}{0pt}
\item[(a)] A compact 3-submanifold $Y\subset X.$
\item[(b)] An $\SU(m)$-trivialization $\rho:\ul{\C}^m\vert_{X\sm Y}\,\smash{{\buildrel\cong\over\longra}}\,\ab E\vert_{X\sm Y}.$
\item[(c)] An embedding $\io:Y\hookra\cS^7,$ so $Y'=\io(Y)$ is a 3-submanifold of $\cS^7.$
\item[(d)] An isomorphism $\Psi:N_Y\ra \io^*(N_{Y'})$ between the normal bundles of $Y$ in $X$ and $Y'$ in $\cS^7,$ preserving orientations and spin structures.
\item[(e)] Tubular neighbourhoods $U$ of $Y$ in $X$ and $U'$ of $Y'$ in $\cS^7,$ and a spin diffeomorphism $\psi:U\ra U'$ with $\psi\vert_Y=\io$ and~$\d\psi\vert_{N_Y}=\Psi.$
\end{itemize}
Define a $\SU(m)$-bundle $E'\ra\cS^7$ by $E'\vert_{\cS^7\sm Y'}\cong\ul{\C}^m,$ $E'\vert_{U'}\cong\psi_*(E\vert_U),$ identified over $U'\sm Y'$ by $(\psi\vert_{U\sm Y})_*(\rho),$ with $\Xi:E\vert_U\,{\buildrel\cong\over\longra}\,\psi^*(E'\vert_{U'}).$ Then we have an excision isomorphism~$\Or(\psi,\Xi,\rho,\rho')\colon \Or_E\ra \Or_{E'}.$

Now every $\SU(m)$-bundle on $\cS^7$ is stably trivial, so Theorem \ref{thm.1.2}(a),(b) determine a unique orientation $o^\mathrm{flat}(E')\in \Or_{E'}.$ Following Theorem \ref{thm.1.2}(c) we define an orientation $o^F_{Y,\rho,\io,\Psi}(E)\in\Or_E$ by
\begin{equation}
\label{equation.1.9}
o^F_{Y,\rho,\io,\Psi}(E)=\bigl(F\vert_U/\psi^*(F_{\cS^7}\vert_{U'})\bigr)[Y]
\cdot\Or(\psi,\Xi,\rho,\rho')^{-1}(o^\mathrm{flat}(E')),
\end{equation}
where $F_{\cS^7}$ is the unique flag structure on $\cS^7.$

Observe that if Theorem \ref{thm.1.2}(a)--(c) hold, they force $o^F(E)=o^F_{Y,\rho,\io,\Psi}(E).$ Thus, if orientations $o^F(E)$ exist satisfying Theorem \ref{thm.1.2}(a)--(c), then they are uniquely determined, as claimed.
\item[(B)] We prove that $o^F_{Y,\rho,\io,\Psi}(E)$ is independent of the choices in~(A)(a)--(e): 
\begin{itemize}
\setlength{\itemsep}{0pt}
\setlength{\parsep}{0pt}
\item[(i)] Independence of $U,U',\psi$ for fixed $Y,\rho,\io,\Psi$ is obvious from properties of excision isomorphisms.
\item[(ii)] Independence of $\Psi$ for fixed $Y,\rho,\io$ is nontrivial. Given two different choices $\Psi_0,\Psi_1$ and $\psi_0,\psi_1,$ we compute the signs comparing how $\psi_0,\psi_1$ act on orientations of bundles trivial away from $Y,$ and how $\psi_0,\psi_1$ act on flag structures near $Y,$ and show these signs are the same, so the combined effect of both signs in \eq{equation.1.9} cancels out.

This is the main point where flag structures are used in the proof.
\item[(iii)] Independence of $\io:Y\hookra\cS^7$ for fixed $Y,\rho$ is easy, as any two such embeddings are isotopic through embeddings.
\item[(iv)] Independence of $Y,\rho$ is again nontrivial, and is proved by analyzing a bordism $Z\subset X\t[0,1]$ between two choices~$Y_0,Y_1\subset X.$
\end{itemize}
We can now define $o^F(E)=o^F_{Y,\rho,\io,\Psi}(E)$ for all $X,F$ and~$E\ra X.$ 
\item[(C)] We verify the $o^F(E)$ in (B) satisfy Theorem \ref{thm.1.2}(a)--(c),(i)--(ii).
\item[(D)] We extend from $\SU(m)$-bundles to $\U(m)$-bundles, which is easy.
\end{itemize}
\smallskip

\noindent{\it Acknowledgements.} This research was partly funded by a Simons Collaboration Grant on `Special Holonomy in Geometry, Analysis and Physics'. The second author was funded by DFG grant UP~85/3-1 and by grant UP~85/2-1 of the DFG priority program SPP~2026 `Geometry at Infinity.' The authors would like to thank Yalong Cao, Aleksander Doan, Sebastian Goette, Jacob Gross, Andriy Haydys, Johannes Nordstr\"om, Yuuji Tanaka, Richard Thomas and Thomas Walpuski for helpful conversations, and the referee. 

\section{Orientations and determinants}
\label{section.2}

\subsection{The Quillen determinant}

\subsubsection{Finite dimensions}

For finite-dimensional vector spaces, the top exterior power has the fundamental property
that a short exact sequence
\begin{equation*}
\smash{\xymatrix{
0\ar[r]&
U\ar[r]^f&
V\ar[r]^g&
W\ar[r]&
0 }}
\end{equation*}
induces a \emph{canonical} isomorphism
\begin{equation*}
  \Lambdatop U \otimes \Lambdatop W \cong \Lambdatop V.
\end{equation*}

\begin{lem}
For finite-dimensional vector spaces $V$ and $W$ we have
\begin{equation*}
\Lambdatop (V\otimes W) \cong (\Lambdatop V)^{\otimes \dim W} \otimes (\Lambdatop W)^{\otimes \dim V}.
\end{equation*}
\end{lem}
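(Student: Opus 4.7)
The plan is the following. Both sides are one\nobreakdash-dimensional vector spaces of the same dimension, so \emph{some} isomorphism exists for trivial reasons; the substance of the lemma (and what matters for the orientation-theoretic applications to come) is that there is a \emph{canonical} isomorphism, independent of auxiliary choices. So my goal is to define one isomorphism using bases, and then verify it is basis-independent by a transformation-law computation.

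First I would fix bases $e_1,\dots,e_m$ of $V$ and $f_1,\dots,f_n$ of $W$, inducing the product basis $\{e_i\otimes f_j\}$ of $V\otimes W$ with some fixed (say lexicographic) ordering. Writing $\omega := \bigwedge_{(i,j)}(e_i\otimes f_j)$, a generator of $\Lambdatop(V\otimes W)$, and $\eta := (e_1\wedge\cdots\wedge e_m)^{\otimes n}\otimes (f_1\wedge\cdots\wedge f_n)^{\otimes m}$, a generator of $(\Lambdatop V)^{\otimes n}\otimes(\Lambdatop W)^{\otimes m}$, I would \emph{define} the isomorphism by $\omega\mapsto\eta$ and extend linearly.

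Next I would show this assignment does not depend on the two chosen bases. Change of basis by $h\in \GL(V)$ multiplies $\omega$ by $\det(h\otimes I_W)=\det(h)^n$, while $(\Lambdatop V)^{\otimes n}$ rescales by the same factor $\det(h)^n$ and $(\Lambdatop W)^{\otimes m}$ is unaffected; symmetrically, change by $g\in \GL(W)$ multiplies $\omega$ by $\det(I_V\otimes g)=\det(g)^m$ and rescales $(\Lambdatop W)^{\otimes m}$ by $\det(g)^m$. Hence the scalar identifying $\omega$ with $\eta$ is the same in every basis, and the map is canonical. (An equivalent, more conceptual phrasing: both sides are one-dimensional representations of $\GL(V)\times \GL(W)$ with the same character $(h,g)\mapsto \det(h)^n\det(g)^m$, so they are canonically isomorphic once generators are matched in one basis.)

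The only subtle point I expect is bookkeeping for signs: the generator $\omega$ depends on the chosen ordering of the index set $\{(i,j)\}$, and one has to keep that ordering fixed on both sides of any basis change to make the transformation-law check clean. Apart from this minor combinatorial care, the proof is essentially a direct computation with determinants; there is no serious geometric or analytic obstacle.
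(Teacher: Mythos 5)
The paper states this lemma without proof, treating it as standard linear algebra, so there is no paper argument to compare against; your proposal is correct and would be an acceptable way to fill that gap. One small caution on the word \emph{canonical}: the map you construct genuinely depends on the chosen ordering of the index set $\{(i,j)\}$, not merely as bookkeeping. Reordering $\{(i,j)\}$ by a permutation $\sigma$ rescales $\omega$ by $\mathrm{sgn}(\sigma)$ while leaving $\eta$ unchanged, so the resulting isomorphism changes by that sign; in particular, swapping the roles of $V$ and $W$ (i.e.\ passing from row-major to column-major lexicographic order) changes the sign by $(-1)^{\binom{m}{2}\binom{n}{2}}$, which is nontrivial e.g.\ when $\dim V=\dim W=2$. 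Your $\GL(V)\times\GL(W)$-equivariance computation is exactly right and shows the map is \emph{basis}-independent once a single ordering convention is fixed, and that is all the bare $\cong$ of the lemma requires. The conceptual reformulation at the end (both sides have character $(h,g)\mapsto\det(h)^{\dim W}\det(g)^{\dim V}$) is also correct, but note that this only pins down the isomorphism up to a scalar; the generator-matching step is where the ordering convention enters, so it does not remove the sign ambiguity.
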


\subsubsection{Fredholm determinant}

The determinant of a homomorphism
$f\colon V^0 \to V^1$ of finite-dimensional vector spaces is an element of $(\Lambdatop V^0)^* \otimes \Lambdatop V^1.$ This is isomorphic to $(\Lambdatop \Ker f)^* \otimes \Lambdatop \Coker f,$
by the fundamental property applied to
\begin{equation*}
\xymatrix@C=3ex{
0\ar[r]&
\Ker f\ar[r]&
V^0\ar[r]&
V^1\ar[r]&
\Coker f\ar[r]&
0.
}
\end{equation*}

\begin{dfn}
Let $F\colon \cH^0 \to \cH^1$ be a Fredholm operator between Hilbert spaces. 
The \emph{determinant line of $F$} is $\det F \coloneqq \Lambdatop \Ker F \otimes \left(\Lambdatop \Coker F \right)^*.$
\end{dfn}

\begin{prop}\label{prop.2.3}
For every commutative diagram of bounded operators
\begin{equation*}
\xymatrix@C=40pt@R=15pt{
0\ar[r] & \cF^0 \ar[r]\ar[d]_{F} & \cG^0\ar[d]_{G}\ar[r] & \cH^0\ar[d]_{H}\ar[r] & 0\\
0\ar[r] & \cF^1 \ar[r] & \cG^1\ar[r] & \cH^1\ar[r] & 0 }
\end{equation*}
with exact rows and\/ $F,G,H$ Fredholm there is a canonical isomorphism
\e
\label{equation.2.1}
\det G \cong \det F\otimes \det H.
\e
\end{prop}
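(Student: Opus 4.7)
My plan is to apply the snake lemma to reduce the proposition to a purely finite-dimensional statement about top exterior powers of long exact sequences. Since $F$, $G$, and $H$ are Fredholm, their kernels and cokernels are finite-dimensional, and the snake lemma applied to the commutative diagram with exact rows produces the six-term exact sequence
\begin{equation*}
0 \longra \Ker F \longra \Ker G \longra \Ker H \longra \Coker F \longra \Coker G \longra \Coker H \longra 0.
\end{equation*}
Since $\det F = \Lambdatop \Ker F \otimes (\Lambdatop \Coker F)^*$ and similarly for $G$ and $H$, all the data needed to construct the isomorphism \eqref{equation.2.1} now lives in this single finite-dimensional exact sequence.

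Next I would extend the fundamental property of $\Lambdatop$ recalled just before Definition~\ref{prop.2.3} from short exact sequences to arbitrary finite exact sequences. Given $0 \to V_1 \to V_2 \to \cdots \to V_n \to 0$ of finite-dimensional vector spaces, one obtains a canonical isomorphism $\bigotimes_{i \text{ odd}} \Lambdatop V_i \cong \bigotimes_{i \text{ even}} \Lambdatop V_i$ by induction on $n$: splice at the second term to obtain the short exact sequence $0 \to V_1 \to V_2 \to \Im(V_2 \to V_3) \to 0$ together with the shorter exact sequence $0 \to \Im(V_2 \to V_3) \to V_3 \to \cdots \to V_n \to 0$, apply the short exact case to the first and the inductive hypothesis to the second, then combine them. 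Applied to the six-term snake-lemma sequence above, and rearranged so that the cokernel factors are moved to the dual side, this is precisely the desired isomorphism $\det G \cong \det F \otimes \det H$.

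The main obstacle, and the reason the word \emph{canonical} in the statement requires genuine work, is verifying that \eqref{equation.2.1} is independent of the (only apparent) choices in the splicing procedure and is natural with respect to morphisms of the given commutative diagram. Independence of splicing is proved by observing that any two ways of cutting a long exact sequence into short exact pieces are related by a diagram of obvious comparison maps whose induced isomorphisms on tensor products of top exterior powers compose to the identity. Naturality under morphisms of short exact sequences of Fredholm operators then follows from the naturality of the snake lemma combined with the evident naturality of the finite-dimensional construction. A preliminary minor point is that in the Hilbert-space setting the rows, being short exact sequences of bounded operators between Hilbert spaces, are automatically topologically split, so the snake lemma applies verbatim. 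These naturality properties are what allow the construction to globalize, in the remainder of the paper, to an isomorphism of determinant line bundles over parameter spaces.
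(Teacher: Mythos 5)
Your proposal is correct and matches the paper's proof, which is stated in one line as ``Snake lemma and the fundamental property in finite dimensions''; you have simply spelled out the snake-lemma reduction, the inductive splicing of the six-term sequence, and the (routine but genuine) canonicity check that the paper leaves implicit.
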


\begin{proof}
Snake lemma and the fundamental property in finite dimensions.
\end{proof}

\begin{dfn}
Let $T$ be a paracompact Hausdorff space.
A \emph{$T$-family} of Fredholm operators
$\{F_t \colon \cH^0_t \to \cH^1_t\}_{t\in T}$
is a homomorphism $F \colon \cH^0 \to \cH^1$
of Hilbert space bundles over $T$ whose
restriction to every fibre is Fredholm. 
The \emph{determinant line bundle} of $F$ is $\det F \coloneqq \bigsqcup_{t\in T} \det F_t.$
\end{dfn}

To see that $\det F$ is locally trivial, pick $t_0 \in T$ and $s(t_0)\colon \C^k \to \cH^1_{t_0}$ 
so that $F_{t_0}\oplus s(t_0)$ is surjective. Extend $s$ to a neighbourhood of $t_0.$ Proposition~\ref{prop.2.3} for $(F,F\oplus s,\underline\C^k \to \{0\})$ gives ${\det F = \det(F\oplus s) = \Lambdatop \Ker(F\oplus s)^*}.$ Since $F\oplus s$ is surjective near $t_0,$ $\Ker(F\oplus s)$ is a subbundle there.

\begin{ex}
Let $D$ be a family of elliptic differential or pseudo-differential operators over a compact manifold $X.$ These determine Fredholm operators by regarding them as acting on Sobolev spaces. 
The determinant line bundle is independent of the degree of the Sobolev space, since by elliptic regularity the kernels of $D$ and $D^*$ consist of smooth sections. Here, $D^*$ denotes the formally adjoint differential operator and we recall $\Ker D^* \cong \Coker D.$
\end{ex}

For a family of differential operators the manifold and vector bundle may depend on $t\in T,$ as long as they form a fibre bundle \cite{AtSi4}.

\begin{lem}
\label{lem.2.6}
Let\/ $\{F_t^0 \colon \cH^0_t \to \cH^1_t\}_{t\in T}$ and\/
$\{F_t^1 \colon \cH^0_t \to \cH^1_t\}_{t\in T}$ be homotopic through\/ $T$-families of Fredholm operators. Then\/ $\det F^0 \cong \det F^1.$
\end{lem}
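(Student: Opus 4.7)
The natural approach is to view the homotopy itself as a family over $T\times [0,1]$ and then invoke homotopy invariance of line bundles on paracompact spaces.

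First, I would organize the data. A homotopy between $F^0$ and $F^1$ through $T$-families of Fredholm operators amounts to a continuous family $\{F_{(t,s)}\colon \cH^0_t\to \cH^1_t\}_{(t,s)\in T\times[0,1]}$, which we package as a bounded operator $F\colon \pi^*\cH^0\to \pi^*\cH^1$ of Hilbert bundles on $T\times [0,1]$ whose fibres are Fredholm, where $\pi\colon T\times[0,1]\to T$ is the projection. This is precisely a $T\times[0,1]$-family of Fredholm operators in the sense of Definition~\ref{dfn}, and $T\times[0,1]$ is paracompact Hausdorff since $T$ is. Thus by the local triviality argument following Definition~\ref{dfn} (using local stabilizations $F\oplus s$ which are surjective on an open neighbourhood of any point), we obtain a well-defined line bundle $\det F \to T\times [0,1]$.

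Next I would identify the restrictions. By construction of $\det F$ fibrewise as $\Lambdatop\Ker F_{(t,s)}\otimes(\Lambdatop\Coker F_{(t,s)})^*$, and by the fact that $F_{(t,i)}=F^i_t$ for $i=0,1$, we have tautological equalities of line bundles
\e
\label{equation.hi}
\det F\bigl\vert_{T\times\{0\}}=\det F^0,\qquad \det F\bigl\vert_{T\times\{1\}}=\det F^1.
\e

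Finally, I would apply homotopy invariance. The inclusions $j_i\colon T\times\{i\}\hookrightarrow T\times[0,1]$ for $i=0,1$ are homotopy equivalences with common inverse $\pi$, and on paracompact Hausdorff spaces the isomorphism class of a real (or complex) line bundle depends only on the homotopy class of its classifying map. Pulling back $\det F$ along the homotopy $j_0\circ\pi\simeq \id\simeq j_1\circ\pi$ thus gives
\[
\det F^0 = j_0^*(\det F)\cong (j_0\ci\pi)^*(\det F\vert_{T\times\{0\}})\cong (j_1\ci\pi)^*(\det F\vert_{T\times\{1\}})= j_1^*(\det F)=\det F^1,
\]
which is the desired isomorphism. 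There is no real obstacle here: the only subtle point is verifying that the homotopy assembles into a genuine $T\times[0,1]$-family so that the local-triviality construction of $\det F$ applies, after which the conclusion is purely formal.
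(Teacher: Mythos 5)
Your proof is correct and follows essentially the same route as the paper: package the homotopy as a $T\times[0,1]$-family of Fredholm operators, form its determinant line bundle, and conclude by homotopy invariance of line bundles over the paracompact Hausdorff base $T\times[0,1]$. The only cosmetic issue is the dangling reference \verb|\ref{dfn}|, which should point to the definition of $T$-families of Fredholm operators.
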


\begin{proof} By definition, a homotopy is a $(T\times [0,1])$-family of Fredholm operators
$H(t,s).$ The determinant line bundle of $H$
restricts over $T\times \{s\}$ to $\det F^s$ for $s=0,1.$ 
The inclusions of the endpoints of $T\times [0,1]$ are
homotopic and as $T$ is paracompact Hausdorff, the pullbacks $\det H|_{T\times \{0\}}$ and
$\det H|_{T\times \{0\}}$ are isomorphic.
\end{proof}

Up to this point the discussion applies to operators over both the real
or the complex numbers. From now on we need real operators.

\begin{dfn} The \emph{orientation cover} of a $T$-family of real Fredholm operators $\{F_t \colon \cH^0_t \to \cH^1_t\}_{t\in T}$ is
$\Or F \coloneqq (\det F \setminus \{\text{zero section}\}) / \R_{>0}.$
An {\it orientation\/} for the determinant of the family is a
global section of $\Or F.$
\end{dfn}

As $\det F$ is locally trivial, $\Or F$ is a double cover of $T,$ so
for $T$ connected there are either two orientations or none.
An advantage of orientation covers is their deformation
invariance. The argument for Lemma~\ref{lem.2.6} now gives:

\begin{lem}
\label{lem.2.8}
Let\/ $\{F_t^0 \colon \cH^0_t \to \cH^1_t\}_{t\in T}$ and\/ $\{F_t^1 \colon \cH^0_t \to \cH^1_t\}_{t\in T}$ be homotopic through\/ $T$-families of real Fredholm operators. Then we have a canonical
fibre transport isomorphism\/ $\Or F^0 \cong \Or F^1.$
\end{lem}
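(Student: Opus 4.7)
The plan is to mirror the proof of Lemma~\ref{lem.2.6} at the level of orientation covers. A homotopy between $F^0$ and $F^1$ is, by definition, a $(T\times[0,1])$-family of real Fredholm operators $H$ with $H|_{T\times\{s\}}=F^s$ for $s=0,1$. Applying the orientation cover construction fibrewise to $H$ yields a double cover $\Or H\to T\times[0,1]$ whose restriction to $T\times\{s\}$ is tautologically $\Or F^s$.

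The canonical fibre transport is then obtained from the covering-space property. For each $t\in T$, the path $\gamma_t\colon[0,1]\to T\times[0,1]$, $s\mapsto(t,s)$, admits a unique lift to $\Or H$ once a starting point in the fibre $\Or F^0|_t$ has been chosen, producing a $\Z_2$\nobreakdash-equivariant bijection $\tau_t\colon \Or F^0|_t\to \Or F^1|_t$. The main step is to upgrade the pointwise family $\{\tau_t\}_{t\in T}$ to a continuous isomorphism of double covers over $T$. For this I would use that $T\times[0,1]$ is paracompact Hausdorff, together with compactness of $[0,1]$: around any $t_0\in T$ one patches finitely many local trivializations of $\Or H$ along the segment $\{t_0\}\times[0,1]$ to obtain a single trivialization over $U_{t_0}\times[0,1]$ for some open neighbourhood $U_{t_0}\ni t_0$. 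In such a trivialization $\tau_t$ acts as the identity on the two sheets, hence is continuous in $t$; the local pieces then glue, since $\tau_t$ is intrinsically defined by path-lifting and therefore independent of the chosen trivialization.

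The main (mild) obstacle is this local trivialization step: one must ensure the trivializing neighbourhood is a full cylinder $U_{t_0}\times[0,1]$ rather than just a tube around a single point, which is where compactness of $[0,1]$ and paracompact Hausdorffness of $T$ enter crucially. Equivalently, this amounts to the classical fact that pullbacks of principal $\Z_2$\nobreakdash-bundles along homotopic maps between paracompact Hausdorff spaces are isomorphic, applied to the two endpoint inclusions $T\hookrightarrow T\times\{0\},T\times\{1\}\hookrightarrow T\times[0,1]$; the path-lifting description makes the resulting isomorphism canonical, as required by the statement.
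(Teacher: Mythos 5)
Your proposal is correct and follows essentially the same route as the paper: the paper's proof of Lemma~\ref{lem.2.8} simply defers to the argument for Lemma~\ref{lem.2.6} — form the $(T\times[0,1])$-family $H$, note $\Or H$ restricts to $\Or F^s$ over the endpoints, and invoke homotopy invariance of pullbacks of double covers over the paracompact Hausdorff base $T$. Your extra detail on unique path-lifting along $\{t\}\times[0,1]$ just makes explicit why the isomorphism is \emph{canonical} (this is what the paper calls ``fibre transport''), which the paper leaves implicit.
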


In particular, the orientation cover of a $T$-family of real elliptic operators $D$ depends only on the principal symbol.

\subsection{Orientation torsors and excision}

\subsubsection{Basic construction}

We now simplify the discussion by restricting to Diracians twisted by
connections. On the level of orientations only the underlying vector
bundles matter:

\begin{dfn}
\label{dfn.2.9}
Let $(X,g)$ be an odd-dimensional compact spin manifold with real spinor
bundle $\sS.$ Let $E \to X$ be a vector bundle with $\SU(m)$-structure,
and let $\Ad E$ be the associated bundle of Lie algebras. The twisted Diracians
\begin{equation*}
\D^g_{\Ad A} \colon C^\infty(X, \sS\otimes_\R \Ad E) \longra C^\infty(X, \sS\otimes_\R \Ad E),\qquad
A\in \A_E,
\end{equation*}
determine a family $\D^g_{\Ad E}$ of real elliptic operators parametrized by the space $\A_E$
of $\SU(m)$-connections on $E.$
Let $\D^g_{\Ad \underline\C^m,0}$ be the Diracian twisted by the trivial bundle
$\Ad \underline\C^m$ and zero connection.
The \emph{orientation torsor of\/ $E\to X$} is
\e
\label{equation.2.2}
\Or_E \coloneqq C^\infty\bigl(  \A_E, \bar O^{\D^{\smash{g}}}_E \bigr)\otimes_{\Z_2} \Or\bigl(\det(\D^g_{\Ad \underline\C^m,0})\bigr)^*.
\e
Similarly, for a paracompact Hausdorff space $P$ and a $P$-family of
$\SU(m)$-bundles, meaning a $\SU(m)$-bundle $E \to X \times P$ smooth in the $X$ directions, we get a double cover $\Or_E \to P$ by taking global sections only in the $X$-direction.
\end{dfn}

By Lemma \ref{lem.2.8}, $\Or_E$ does not depend on $g$ up
to canonical isomorphism. More formally, one may take global
sections also in this contractible variable.

\begin{rem} Let $Q$ be the principal $\SU(m)$-frame bundle of $E.$ In the terminology of \cite{JTU}, when $\cB_Q$ is n-orientable, the orientation torsor $\Or_E$ is the set of global sections of the n-orientation bundle~$\check{O}^{\D^g}_Q\ra\cB_Q.$
\end{rem}

\begin{rem} As $\D^g_{\Ad \underline\C^m,0}$ is symmetric, the second factor in \eqref{equation.2.2} is canonically trivial. However, when $m$ is even, this orientation is sensitive to the metric and changes discontinuously according to the
spectral flow of $\D^g.$ We prefer to keep track of an extra choice of orientation for the untwisted Diracian $\D^g.$ By \eqref{equation.2.1} it induces a trivialization of $\Or\bigl(\det(\D^g_{\Ad \underline\C^m,0})\bigr).$ The second factor in \eqref{equation.2.2} has been introduced to simplify the formulation of the excision principle below.
\end{rem}

\begin{rem} For anti-self-dual moduli spaces in dimension four the Diracian is replaced by $\d\op\d^*_+:C^\iy(\La^0T^*X\op\La^2_+T^*X)\ra C^\iy(\La^1T^*X),$ as in Donaldson--Kronheimer \cite{DoKr}. For these $\Or_E$ is canonically trivial and the untwisted operator is responsible for the dependence of orientations on $H^0(X)\oplus H^1(X)\oplus H^2_+(X).$
\end{rem}

\begin{dfn}
\label{dfn.2.13}
For $E=\underline{\C}^m$ we can evaluate at the zero connection and canonically identify the orientation torsor with $\Z_2.$ We write
$o^\mathrm{flat}(\underline{\C}^m) \in \Or_{\underline{\C}^m}$ for
this canonical base-point.
\end{dfn}

\subsubsection{Orientations and direct sums}

The behaviour of orientation bundles under direct sums is studied in 
the companion paper \cite[Ex.~2.11]{JTU}. From there we recall the following:

\begin{prop}
\label{prop.2.14}
Let\/ $E_1$ be a\/ $\SU(m_1)$-bundle,\/ $E_2$ a\/ $\SU(m_2)$-bundle over a compact odd-dimensional spin manifold\/ $X.$ We have a canonical isomorphism
\e
\label{equation.2.3}
\lambda_{E_1, E_2}\colon \Or_{E_1} \otimes_{\Z_2} \Or_{E_2}
\longra
\Or_{E_1\oplus E_2}.
\e
These have the following properties:
\begin{itemize}
\setlength{\itemsep}{0pt}
\setlength{\parsep}{0pt}
\item[{\bf(i)}]\textup{(Families.)}~Let\/ $P$ be compact Hausdorff,\/ $E_1 \to X\times P$ a\/ $\SU(m_1)$-bundle,
and\/ $E_2 \to X\times P$ a\/ $\SU(m_2)$-bundle, regarded as\/ $P$-families of bundles. Then the collection of all maps\/ $\lambda_{E_1|_{X\times \{p\}}, E_2|_{X\times \{p\}}}$ for each\/ $p \in P$ becomes a continuous map of double covers over\/ $P.$
\item[{\bf(ii)}]\textup{(Associative.)}~$\lambda_{E_1, E_2\oplus E_3}\circ
(\id_{\Or_{E_1}}\otimes\lambda_{E_2, E_3})
=\lambda_{E_1\oplus E_2, E_3}\circ
(\lambda_{E_1, E_2} \otimes \id_{\Or_{E_3}})$
\item[{\bf(iii)}]\textup{(Commutative.)}~$\Or(\operatorname{flip})\circ \lambda_{E_1, E_2}\!=\!\lambda_{E_2, E_1}\circ\operatorname{flip} \colon\! \Or_{E_1}\otimes \Or_{E_2}\! \to\! \Or_{E_2 \oplus E_1}.$
\item[{\bf(iv)}]\textup{(Unital.)}~$\lambda_{\underline\C^{m_1},\underline\C^{m_2}}\bigl(o^\mathrm{flat}(\underline\C^{m_1})\otimes o^\mathrm{flat}(\underline\C^{m_2})\bigr) =
o^\mathrm{flat}(\underline\C^{m_1+m_2}).$
\end{itemize}
Moreover, in \eqref{equation.2.5} we will see that the isomorphisms \eqref{equation.2.3} are natural.
\end{prop}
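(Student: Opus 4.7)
The plan is to build $\la_{E_1,E_2}$ fibrewise at a direct-sum connection, identify the `cross terms' that arise as determinants of $\C$-linear Fredholm operators (hence canonically oriented), and then read off the four axioms from the naturality of this construction. Because $\A_{E_i}$ and $\A_{E_1\op E_2}$ are contractible, the orientation double covers are trivializable, so it suffices to construct and compare the isomorphism at any single base point.

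Fix $A_i\in\A_{E_i}$ and form the direct-sum connection $A_1\op A_2\in\A_{E_1\op E_2}$. Under the embedding $\mathfrak{su}(m_1)\op\mathfrak{su}(m_2)\hookra\mathfrak{su}(m_1+m_2)$, the adjoint bundle decomposes as real vector bundles with connection
\begin{equation*}
\Ad(E_1\op E_2) \cong \Ad E_1 \op \Ad E_2 \op \ul\R \op (\Hom(E_1,E_2))_\R,
\end{equation*}
where $\ul\R$ is the relative-phase $\mathfrak u(1)$-factor and $(\Hom(E_1,E_2))_\R$ is the real form of the off-diagonal skew-Hermitian block via $T\mapsto(T,-T^*)$. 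The twisted Diracian splits accordingly, so Proposition~\ref{prop.2.3} yields
\begin{equation*}
\det\D^g_{\Ad(A_1\op A_2)} \cong \det\D^g_{\Ad A_1}\ot\det\D^g_{\Ad A_2}\ot\det\D^g\ot\det\D^g_{(\Hom(E_1,E_2))_\R}.
\end{equation*}
A parallel decomposition holds for $\Ad\ul\C^{m_1+m_2}$ at the zero connection, differing only in that the last factor becomes $\det\D^g_{(\ul\C^{m_1m_2})_\R}$.

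The crucial observation is that the Diracian twisted by the underlying real bundle of any Hermitian complex bundle $V$ with unitary connection is $\C$-linear, since Clifford multiplication on $\sS$ commutes with the $\C$-structure inherited from $V$, and unitary connections are $\C$-linear on $V$. Its kernel and cokernel are therefore complex vector spaces, so the real determinant line carries a canonical orientation from the top complex exterior powers. Applied to $V=\Hom(E_1,E_2)$ and $V=\ul\C^{m_1m_2}$, this furnishes canonical trivializations of the two `cross-term' orientation lines. Substituting both decompositions into \eqref{equation.2.2} and cancelling the matching $\det\D^g$ and complex-oriented factors produces the pointwise identification $\Or_{E_1\op E_2}|_{A_1\op A_2}\cong\Or_{E_1}|_{A_1}\ot_{\Z_2}\Or_{E_2}|_{A_2}$, which promotes uniquely to a torsor isomorphism $\la_{E_1,E_2}$. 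Property (i) follows by running the construction fibrewise over $P$ and invoking Lemma~\ref{lem.2.8} for continuous dependence.

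Properties (ii)--(iv) are then formal bookkeeping. Associativity (ii) arises from iterating the splitting for $\Ad(E_1\op E_2\op E_3)$ and invoking the coherence of Proposition~\ref{prop.2.3} for nested short exact sequences in both orderings. Unitality (iv) is immediate, since at the zero connection on $\ul\C^{m_1+m_2}$ every summand specializes to its flat base-point of Definition~\ref{dfn.2.13}. For commutativity (iii), the flip exchanges the off-diagonal block via $T\mapsto -T^*$, reversing the $\C$-structure on $(\Hom(E_1,E_2))_\R$; because the twisted Diracian is self-adjoint one has $\dim_\C\Ker=\dim_\C\Coker$, so the sign on the canonical complex orientation is $(-1)^{2\dim_\C\Ker}=+1$, and the identical cancellation occurs on the $\ul\C^{m_1+m_2}$ side, leaving the net effect on $\la$ precisely as $\Or(\operatorname{flip})$. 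I expect the main practical obstacle to be the meticulous packaging of these several orientation lines and the verification that their canonical trivializations compose coherently across (ii)--(iv); this is most cleanly executed through the stable-equivalence machinery of \cite[\S 2]{JTU} that the proposition cites.
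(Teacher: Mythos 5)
Your proposal follows essentially the same route as the paper's proof: decompose $\Ad(E_1\op E_2)\cong\Ad E_1\op\Ad E_2\op\R\op\Hom_\C(E_1,E_2)$, split the determinant by Proposition \ref{prop.2.3}, trivialize the cross-term factor by $\C$-linearity of the twisted Diracian, compare against the same decomposition for $\ul\C^{m_1+m_2}$, and invoke the machinery of \cite[\S 2]{JTU} for (i)--(iv). The only point worth noting is that your explicit commutativity argument (the flip conjugates the $\C$-structure on the off-diagonal block, and the resulting sign $(-1)^{\dim_\C\Ker+\dim_\C\Coker}$ is $+1$ because these dimensions agree) is precisely what the paper compresses into the remark that ``indices vanish in odd dimensions'' when citing \cite[(2.11)]{JTU}, so you have correctly unpacked that step.
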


We shall adopt the product notation $u \cdot v\coloneqq \lambda_{E_1, E_2}(u\otimes v).$

\begin{proof} We briefly recall the argument of \cite[Ex.~2.11]{JTU}.
For the adjoint bundles $\Ad(E_1\oplus E_2)\cong \Ad(E_1) \oplus \Ad(E_2)\oplus \R \oplus \Hom_\C(E_1,E_2),$ so by \eqref{equation.2.1}
\begin{equation*}
\bar O^{\D^{\smash{g}}}_{E_1\op E_2}\cong\bar O^{\D^{\smash{g}}}_{E_1}\otimes_{\Z_2}\bar O^{\D^{\smash{g}}}_{E_2}\otimes_{\Z_2} \Or\bigl(\det_\R \D^g\bigr)\otimes_{\Z_2} \Or\bigl(\det_\R(\D^g_{\Hom_\C(E_1,E_2)})\bigr).
\end{equation*}
As the Diracian twisted by $\Hom_\C(E_1,E_2)$ is complex linear, its kernels and cokernels are complex vector spaces and $\Or\bigl(\det_\R(\D^g_{\Hom_\C(E_1,E_2)})\bigr)$ is canonically trivial. This, combined with the same for $\underline\C^{m_1},$ $\underline\C^{m_2}$ in place of $E_1,$ $E_2,$ gives \eqref{equation.2.3}. The same proof works for families. Associativity is \cite[(2.12)]{JTU} and commutativity is \cite[(2.11)]{JTU}, noting that indices vanish in odd dimensions.
\end{proof}

\subsubsection{Excision}

Seeley's excision principle \cite[Th.~1 on p.~198]{Seel} (also called transplanting) is one of the key techniques in the $K$-theory proof of the Atiyah--Singer index theorem \cite[\S 8]{AtSi1}. Donaldson first applied excision to gauge theory in \cite{Dona}, see also \cite[\S 7]{DoKr}. In \cite{Upme}, the second author observes that on the level of orientations these ideas can be formalized into a `categorification' of the classical calculus for the numerical index. Here is \cite[Th.~2.13]{Upme} in the case~$G=\SU(m)$:

\begin{thm}[Excision]\label{thm.2.15}
Let\/ $E^\pm \to X^\pm$ be\/ $\SU(m)$-bundles over compact connected spin manifolds. Let\/ $U^\pm \subset X^\pm$ be open and let\/ $\rho^\pm$ be\/ $\SU(m)$-frames of\/ $E^\pm$ defined outside compact subsets of\/ $U^\pm.$ Let\/ $\phi \colon U^+ \to U^-$ be a spin diffeomorphism covered by a\/ $\SU(m)$-isomorphism\/ $\Phi\colon E^+|_{U^+} \to E^-|_{U^-}$ with\/ $\Phi\circ \rho^+ = \phi^*\rho^-$ outside some compact subset of\/ $U^+.$ This data induces an \emph{excision isomorphism}
\e
\label{equation.2.4}
\Or(\phi,\Phi,\rho^+,\rho^-) \colon \Or_{E^+} \longra \Or_{E^-}.
\e
These excision isomorphisms have the following properties:
\begin{itemize}
\setlength{\itemsep}{0pt}
\setlength{\parsep}{0pt}
\item[{\bf(i)}]\textup{(Functoriality.)}~Let\/ $E^\times \to X^\times$ be a\/ $\SU(m)$-bundle,\/ $U^\times \subset X^\times$ open,\/ $\psi\colon U^- \to U^\times$ a spin diffeomorphism,\/ $\rho^\times$ a\/ $\SU(m)$-frame defined outside a compact subset of\/ $U^\times,$ and\/ $\Psi$ a $\SU(m)$-isomorphism covering\/ $\psi$ that identifies\/ $\rho^-$ and\/ $\rho^\times$ outside a compact subset of\/ $U^-.$ Then
\begin{equation*}
\Or(\psi,\Psi,\rho^-,\rho^\times)\circ \Or(\phi,\Phi,\rho^+,\rho^-) = \Or(\psi\circ\phi,\Psi\circ\Phi,\rho^+,\rho^\times).
\end{equation*}
Moreover,\/ $\Or(\id,\id,\rho^+,\rho^-)=\id_{\Or_E}.$
\item[{\bf(ii)}]\textup{(Families.)}~Let\/ ${E^\pm \to X^\pm \times P}$ be\/ $\SU(m)$\nobreakdash-bundles, where\/ $P$ is a compact Hausdorff space. Let\/ $\rho^\pm$ be\/ $\SU(m)$-frames of\/ $E^\pm$ outside compact subsets of open\/ $U^\pm \subset X^\pm.$ Let\/ $\Phi \colon E^+|_{U^+} \to E^-|_{U^-}$ be a\/ $\SU(m)$-isomorphism covering a continuous\/ $P$-family of spin diffeomorphisms\/ $\phi\colon U^+ \to U^-.$ Assume\/ ${\Phi\circ \rho^+ = \phi^*\rho^-}$ outside a compact subset of\/ $U^+.$ Then the collection of all maps \eqref{equation.2.4} for each\/ $p\in P$ becomes a continuous map of double covers over\/~$P.$

In particular, when\/ $E^\pm$ is pulled back from\/ $X^\pm$ along the projection, the isomorphism \eqref{equation.2.4} is unchanged under deformation of the rest of the data\/ $U^\pm, \rho^\pm, \phi, \Phi.$
\item[{\bf(iii)}]\textup{(Empty set.)}~If\/ $U^\pm = \emptyset$ then\/ $\Or(\phi,\Phi,\rho^+,\rho^-)=\id_{\Z_2}$ under the isomorphisms\/ $\Or_{E^\pm}\cong \Z_2$ induced by Definition\/ {\rm\ref{dfn.2.13}} and\/~$E^+\,{\buildrel\rho^+\over =}\,\ul\C^m\,{\buildrel\rho^-\over =}\,E^-.$
\item[{\bf(iv)}]\textup{(Direct sums.)}~For\/ $k=1,2$ let\/ $E_k^\pm \to X^\pm$ be\/ $\SU(m_k)$-bundles and let\/ $\rho^\pm_k$ be\/ $\SU(m_k)$-frames of\/ $E_k^\pm$ outside compact subsets of\/~$U^\pm \subset X^\pm.$

Let\/ $\phi \colon U^+ \to U^-$ be a spin diffeomorphism covered by $\SU(m_k)$-iso\-mor\-ph\-isms\/ $\Phi_k \colon E_k^+ \to E_k^-$ for\/ $k=1,2.$ Then we have a commutative diagram
\e
\label{equation.2.5}
\begin{gathered}
\xymatrix@C=180pt@R=15pt{
*+[r]{\Or_{E_1^+}\otimes \Or_{E_2^+}} \ar[d]^{\eqref{equation.2.3}}\ar[r]_{\Or(\phi,\Phi_1, \rho_1^\pm)\otimes \Or(\phi,\Phi_2, \rho_2^\pm)} & *+[l]{\Or_{E_1^-}\otimes \Or_{E_2^-}} \ar[d]_{\eqref{equation.2.3}} \\
*+[r]{\Or_{E_1^+ \oplus E_2^+}} \ar[r]^{\Or(\phi, \Phi_1\oplus \Phi_2, \rho_1^\pm \oplus \rho_2^\pm)} & *+[l]{\Or_{E_1^- \oplus E_2^-}.\!} }
\end{gathered}
\e
\item[{\bf(v)}]\textup{(Restriction.)}~Let\/ $\tilde\phi\colon \tilde{U}^+ \to \tilde{U}^-$ be a spin diffeomorphism extending\/ $\phi$ to open supersets\/ $U^\pm \subset \tilde{U}^\pm \subset X^\pm,$ let\/ $\tilde\Phi$ be a\/ $\SU(m)$-isomorphism over\/ $\tilde\phi$ extending\/ $\Phi,$ and assume\/ $\tilde\Phi\circ \rho^+ = \tilde\phi^*\rho^-$ outside a compact subset of\/ $U^+.$ Then\/ $\Or(\phi,\Phi,\rho^+,\rho^-) = \Or(\tilde\phi,\tilde\Phi,\rho^+,\rho^-).$
\end{itemize}
\end{thm}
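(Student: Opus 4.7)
The plan is to construct $\Or(\phi,\Phi,\rho^+,\rho^-)$ as the global-sections map induced by a canonical isomorphism of determinant line bundles obtained via Seeley's excision principle. First I would choose compact subsets $K^\pm\subset U^\pm$ outside which $\rho^\pm$ are defined and $\Phi\circ\rho^+=\phi^*\rho^-$ holds. Let $\A_{E^\pm,\rho^\pm}\subset\A_{E^\pm}$ denote the affine subset of $\SU(m)$-connections equal to the flat connection determined by $\rho^\pm$ outside some compact subset of $U^\pm$. A cut-off argument shows this inclusion is a homotopy equivalence, so by Lemma~\ref{lem.2.8} taking sections of $\bar O^{\D^{\smash{g}}}_{E^\pm}$ over $\A_{E^\pm,\rho^\pm}$ computes $\Or_{E^\pm}$ up to the normalization factor in \eqref{equation.2.2}. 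The gluing data then supply a canonical bijection $A^+\leftrightarrow A^-$ between the two slices, defined by $A^-|_{U^-}=(\phi^{-1})^*\Phi_*(A^+|_{U^+})$ extended by the flat connection via $\rho^-$ outside. Under this bijection, $\D^g_{\Ad A^+}$ on $X^+$ and $\D^g_{\Ad A^-}$ on $X^-$ each agree outside $K^\pm$ with their untwisted counterparts $\D^g_{\Ad\underline\C^m}$, and over $U^+\cong U^-$ they are $\phi$-related. Seeley's excision then produces a canonical isomorphism of determinant lines
\begin{equation*}
\det\D^g_{\Ad A^+}\ot\det\D^g_{\Ad\underline\C^m,X^-}\cong\det\D^g_{\Ad\underline\C^m,X^+}\ot\det\D^g_{\Ad A^-}
\end{equation*}
varying continuously in $A^+$. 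Passing to orientation covers and rearranging factors per \eqref{equation.2.2} produces the desired $\Z_2$-isomorphism $\Or(\phi,\Phi,\rho^+,\rho^-)$.

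Next I would verify (i)--(v) by unwinding the construction. Functoriality (i) follows because composing two compatible gluings yields a gluing whose Seeley parametrix is homotopic to the composition of the two parametrices; the identity gluing induces the identity. Families (ii) reduces to observing that each step---slice, bijection, Seeley identification---is continuous in a compact Hausdorff parameter $p\in P$, appealing again to Lemma~\ref{lem.2.8} to transport orientation covers along the resulting homotopies. The empty case (iii) is forced: both slices reduce to the flat connection and the Seeley isomorphism is the identity on $\det\D^g_{\Ad\underline\C^m}$. For direct sums (iv), the splitting $\Ad(E_1\op E_2)=\Ad E_1\op\Ad E_2\op\R\op\Hom_\C(E_1,E_2)$ from the proof of Proposition~\ref{prop.2.14} factors the Seeley isomorphism; the $\Hom_\C$-factor is complex linear, hence canonically oriented and invariant under excision, producing the commutative diagram \eqref{equation.2.5}. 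Restriction (v) is immediate, since enlarging $U^\pm$ to $\tilde U^\pm$ does not change the compact region $K^\pm$ where the parametrix differs from trivial, so the two excision isomorphisms literally coincide.

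The main technical obstacle will be making Seeley's excision canonical at the level of determinant line bundles rather than only at the level of indices. The standard parametrix proof depends on choices of partition of unity and cut-off function, so I would need to show that any two such choices yield Fredholm families related by a canonical homotopy, enabling Lemma~\ref{lem.2.8} to identify the resulting orientation-cover isomorphisms as equal. A secondary difficulty is organizing the slices $\A_{E^\pm,\rho^\pm}$ into continuous families under deformations of the gluing data, which requires compatible choices of cut-off varying over $P$. The cleanest organisational framework is the ``orientation functor'' assembled in \cite[\S 2]{Upme}: once that apparatus is set up, the construction above is canonical by design and verification of (i)--(v) is essentially bookkeeping.
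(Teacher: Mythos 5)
The paper does not prove Theorem \ref{thm.2.15} itself: it is quoted from the companion paper \cite[Th.~2.13]{Upme}, so there is no internal proof to compare against. Your outline --- restricting to the contractible slices of connections trivialized by $\rho^\pm$ outside compact subsets of $U^\pm$, transporting them through $(\phi,\Phi)$, and categorifying Seeley's excision for the determinant lines taken relative to the untwisted reference operator as in \eqref{equation.2.2} --- is consistent with the strategy of \cite{Upme}, and you correctly locate where the genuine work lies (making the excision isomorphism canonical, i.e.\ independent of parametrix and cut-off choices, plus the fact that $\phi$ need not be an isometry so the operators are only symbol-homotopic rather than literally conjugate over $U^\pm$), which is precisely the content delegated to that reference.
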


Here we recall from \cite[p.~86]{LaMi} that a \emph{spin diffeomorphism} is an orientation-preserving diffeomorphism ${\phi\colon X^+ \to X^-}$ together with a choice of lift of the induced map on $\GL^+(\R)$-frame bundles to the topological spin bundles.

If $E^\pm\ra X$ are $\SU(m)$-bundles and $\Phi:E^+\ra E^-$ a $\SU(m)$-isomorphism, we may take $X^+=X^-=U^+=U^-=X,$ and $\phi=\id_X,$ and $\rho^+=\es=\rho^-$ to be defined over the empty set. Then we use the shorthand
\begin{equation*}
\Or(\Phi)=\Or(\id_X,\Phi,\es,\es):\Or_{E^+}\longra\Or_{E^-}.	
\end{equation*}

\subsection{Global automorphisms}

\subsubsection{Mapping torus}

Theorem~\ref{thm.2.15} includes as the special case $U^\pm = X^\pm$ the more obvious functoriality for globally defined diffeomorphisms $\phi\colon X^+ \to X^-$ and $\Phi.$ The theorem can be regarded as extending this functoriality to open manifolds and compactly supported data. The effect of a globally defined diffeomorphism can be studied using the following construction.

\begin{dfn} The \emph{mapping torus} of a diffeomorphism $\psi\colon X \to X$ is the quotient $X_\psi$ of $X\times [0,1]$ by the equivalence relation $(x,1)\sim (\psi(x),0).$
\end{dfn}

\begin{prop} The mapping torus has the following properties:
\begin{itemize}
\setlength{\itemsep}{0pt}
\setlength{\parsep}{0pt}
\item[{\bf(i)}] If\/ $X$ is compact, then\/ $X_\psi$ is compact.
\item[{\bf(ii)}] $X_\psi$ is a fibre bundle over\/ $\cS^1$ with typical fibre $X.$
\item[{\bf(iii)}] If\/ $X$ is oriented and\/ $\psi$ is orientation preserving, then\/ $X_\psi$ is oriented.
\item[{\bf(iv)}] When\/ $X$ has a spin structure and\/ $\psi$ is a spin structure preserving diffeomorphism we get a topological spin structure on\/ $X_\psi.$
\item[{\bf(v)}] Let\/ $E\to X$ be a vector bundle and\/ $\Psi\colon E \to E$ an automorphism covering\/ $\psi.$ Then the mapping torus\/ $E_\Psi$ is a vector bundle over\/ $X_\psi.$
\end{itemize}
\end{prop}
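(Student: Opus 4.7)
The plan is to construct each structure on $X_\psi$ by descending the corresponding structure from $X\t[0,1]$ through the quotient map, with compatibility at the gluing locus $(x,1)\sim(\psi(x),0)$ ensured by the assumed properties of $\psi$ (resp.\ $\Psi$).

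Parts (i) and (ii) are routine. For (i), $X_\psi$ is a continuous image of the compact space $X\t[0,1]$. For (ii), the projection $X\t[0,1]\ra [0,1]$ descends to $\pi\colon X_\psi\ra \cS^1=[0,1]/(0\sim 1)$; local trivializations over small arcs in $\cS^1$ that lift to genuine subintervals of $[0,1]$ are inherited from the product, and around the identification point $[0]=[1]$ we cover by an arc of the form $(\alpha,1]\cup[0,\beta)$ and use $\psi$ to glue its preimage into a single chart of the form $X\t(\alpha-1,\beta)$.

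For (iii), equip $X\t[0,1]$ with the product of the given orientation on $X$ and the standard orientation on $[0,1]$. The gluing $(x,1)\sim(\psi(x),0)$ preserves this product orientation precisely when the $X$-factor of the gluing map, namely $\psi$, does so, so the quotient inherits a well-defined orientation. Part (v) proceeds by the same construction applied to the total space $E\t[0,1]$: set $E_\Psi$ to be its quotient by $(e,1)\sim(\Psi(e),0)$, observe that the bundle projection $E\t[0,1]\ra X\t[0,1]$ descends to $E_\Psi\ra X_\psi$, and obtain local vector bundle trivializations by combining the trivializations from (ii) with fibrewise trivializations of $E\ra X$.

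The step requiring most care is (iv), and this is the one place where the definitions need to be unwound carefully. Following \cite[p.~86]{LaMi}, a spin structure is a $\Spin$-double cover of the oriented $\GL^+$-frame bundle, and a spin-structure-preserving diffeomorphism is an orientation-preserving $\psi$ equipped with a chosen lift $\tilde\psi$ of the induced map of oriented frame bundles to a map of the $\Spin$-double covers. The plan is to pull back the product spin structure on $X\t\R$ to $X\t[0,1]$ and then use $\tilde\psi$ to identify the $\Spin$-fibres at $t=1$ with those at $t=0$; compatibility with the gluing of the oriented frame bundles is exactly the condition that $\tilde\psi$ covers the differential of $\psi$, so the quotient is a topological spin structure on $X_\psi$. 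I expect the main (still essentially routine) obstacle here to be the careful identification of the frame bundles of $X$, of $X\t[0,1]$, and their restrictions to slices $X\t\{t\}$, but once these identifications are set up the construction is forced.
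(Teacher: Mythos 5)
The paper states this proposition without proof, treating it as standard material, so there is no argument in the source to compare against. Your construction-by-descent (equipping $X\times[0,1]$, respectively $E\times[0,1]$, with the product structure and gluing via $\psi$, respectively $\Psi$, resp.\ $\tilde\psi$) is correct and is the standard argument; all five parts go through as you describe. One minor terminological point: in (iv) the relevant double cover of the $\GL^+$-frame bundle has structure group the connected double cover $\widetilde{\GL}{}^+(n,\R)$ rather than the compact group $\Spin(n)$ --- this is what the paper means by ``topological spin bundle,'' and it is precisely why the construction works without a metric --- but this does not affect your argument, which correctly identifies the key point that the lift $\tilde\psi$ furnishes exactly the gluing datum needed at $t=0\sim 1$.
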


\subsubsection{Calculating the effect on orientations using the mapping torus}

\begin{prop}\label{prop.2.18}
Let\/ $X$ be an odd-dimensional compact spin manifold and\/
$\Psi\colon E \to\psi^*(E)$ a\/ $\SU(m)$-isomorphism of a\/ $\SU(m)$-bundle\/ $E\to X$ covering a spin diffeomorphism\/ ${\psi\colon X \to X}.$ Then\/ $\Or(\psi,\Psi,\es,\es) = (-1)^{\delta(\psi,\Psi)}\cdot \id_{\Or_E}$ for
\begin{equation*}
\delta(\psi,\Psi)
\coloneqq 
\int_{X_\psi} \hat{A}(TX_\psi)\bigl( \mathrm{ch}(E_\Psi^* \otimes E_\Psi)-\rk(E_\Psi)^2 \bigr).
\end{equation*}
\end{prop}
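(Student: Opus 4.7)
The plan is to identify the scalar $(-1)^{\delta(\psi,\Psi)}$ with a $\Z_2$-monodromy of the orientation torsor induced by the pair $(\psi,\Psi)$, and then compute it via the Atiyah--Singer index theorem applied to the mapping torus $X_\psi$. This follows the sign calculations pioneered by Donaldson for four-dimensional gauge theory~\cite[\S 5.4]{DoKr}, adapted to the excision framework of~\cite{Upme}.

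\medskip

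\textbf{Step 1 (Loop construction).} Fix $A\in\A_E,$ set $A'\coloneqq\Psi^*(\psi^*A),$ and join them by a path $(A_t)_{t\in[0,1]}$ in the affine space $\A_E$. The data $(\psi,\Psi)$ provide a canonical isomorphism $\D^g_{\Ad A_0}\cong\D^g_{\Ad A_1},$ so the assignment $t\mapsto\D^g_{\Ad A_t}$ descends to a continuous $\cS^1$-family of self-adjoint real Fredholm operators on $X$. Since the excision isomorphism with $U^\pm=X^\pm$ and empty frames reduces to pullback along $(\psi,\Psi),$ Lemma~\ref{lem.2.8} identifies $\Or(\psi,\Psi,\es,\es)$ with the $\Z_2$-monodromy of the determinant orientation bundle around this loop in $\cB_E$.

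\medskip

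\textbf{Step 2 (Index theorem on the mapping torus).} The path $(A_t)$ together with $(\psi,\Psi)$ equips $E_\Psi\ra X_\psi$ with an $\SU(m)$-connection $\mathbb A$. For any spin metric $g_\psi$ on $X_\psi$ of product form near a chosen fibre, the $\cS^1$-family of twisted $7$-dimensional Diracians on $X$ is the dimensional reduction along the $\cS^1$-factor of the twisted real Dirac operator $\D^{g_\psi}_{\Ad\mathbb A}$ on the closed spin $8$-manifold $X_\psi$. By the standard suspension principle equating mod-$2$ spectral flow of a loop of self-adjoint real Dirac operators with the mod-$2$ index of the suspended operator, the monodromy above equals $(-1)^{\ind_2\D^{g_\psi}_{\Ad\mathbb A}}$. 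Applying the cohomological form of the Atiyah--Singer index theorem~\cite{AtSi1} and using $\Ad E_\Psi\ot_\R\C\cong E_\Psi^*\ot E_\Psi\ominus\ul\C$ for the $\SU(m)$-bundle $E_\Psi,$ one obtains the integer
\begin{equation*}
\int_{X_\psi}\hat A(TX_\psi)\bigl(\mathrm{ch}(E_\Psi^*\ot E_\Psi)-1\bigr).
\end{equation*}
The defining formula for $\delta$ uses $-\rk(E_\Psi)^2=-m^2$ in place of $-1$. One accounts for this shift through the second tensor factor of $\Or_E$ in Definition~\ref{dfn.2.9}: the spin diffeomorphism $\psi$ acts on $\Or(\det\D^g_{\Ad\ul\C^m,0})^*$ by the sign of the mod-$2$ Dirac index on $X_\psi$ of the trivially twisted operator of rank $m^2-1$, which by the same suspension argument equals $(-1)^{(m^2-1)\hat A[X_\psi]}$. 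Adding the two exponents modulo~$2$ and observing that $(2m^2-2)\hat A[X_\psi]\equiv 0\pmod 2$, the combined sign is $(-1)^{\delta(\psi,\Psi)}$.

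\medskip

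The main obstacle is the rigorous execution of the suspension step: one must identify the $\cS^1$-family of real self-adjoint Dirac operators on $X$ with the dimensional reduction of $\D^{g_\psi}_{\Ad\mathbb A}$ on $X_\psi,$ compatibly with the real structures on the spinor bundles $\sS_X$ and $\sS^\pm_{X_\psi},$ and with the spin lift of $\psi$ used to form the spin structure on $X_\psi$. One must also verify that the two sign contributions from the two factors of $\Or_E$ combine in the claimed way. Once these compatibilities are established, the translation of $\Z_2$-monodromy into a mod-$2$ Atiyah--Singer index and its cohomological evaluation are entirely standard.
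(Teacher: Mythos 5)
Your proposal is correct and takes essentially the same route as the paper: construct a loop of connections from $A_0$ to $\Psi^*(\psi^*A_0)$, interpret $\Or(\psi,\Psi,\es,\es)$ as the $\Z_2$-monodromy of the orientation double cover around this loop, identify that monodromy with the mod-$2$ spectral flow (paper cites \cite[Th.~7.4]{APS2}), convert to the index of the positive Dirac operator on the mapping torus $X_\psi$ via \cite[Th.~3.10]{APS1}, and evaluate by Atiyah--Singer using $\Ad E_\Psi\otimes_\R\C\oplus\C\cong E_\Psi^*\otimes E_\Psi$; the paper subtracts the contribution of the second factor of $\Or_E$ while you add it, which agrees mod $2$ just as you note via $(2m^2-2)\hat A[X_\psi]\equiv 0$.
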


\begin{proof} By choosing a connection $A_0 \in \A_E$ and any smooth path $A_t,$ $t\in[0,1]$ from $A_0$ to $A_1=\Psi^*A_0$ we may regard $E_\Psi \to X_\psi$ as a $\cS^1$-family of $\SU(m)$-bundles with connection. Pick a metric on $X_\psi.$ Using the induced metrics $g_t$ on $X_t$ we can form the $\cS^1$-family of Diracians $\D^{g_t}_{\Ad A_t}$ twisted by $\Ad E$ and the $\cS^1$-family $\D^{g_t}_{\Ad \underline\C^m, 0}$ twisted by the flat connection. Then $\Or\bigl(\D^{g_t}_{\Ad A_t} \bigr)\otimes_{\Z_2} \Or\bigl(\D^{g_t}_{\Ad \underline\C^m, 0}\bigr)^*$ is a double cover of $\cS^1$ with monodromy $\Or(\psi,\Psi,\es,\es).$ On the other hand, since $\dim X$ is odd, the Diracians are self-adjoint and the monodromy coincides with the spectral flow around the loop~\cite[Th.~7.4]{APS2}. 

As explained by Atiyah--Patodi--Singer in \cite[p.~95]{APS2}, the spectral flow around a loop agrees with the index of a single operator on the mapping torus, using \cite[Th.~3.10]{APS1}. For the family $\D^{g_t}_{\Ad A_t}$ we get the positive Diracian $\D^+$ on $X_\psi$ twisted by $\Ad E_\Psi.$ To compute the index of a single operator we may complexify and can then apply the cohomological index formula of Atiyah--Singer \cite{AtSi3} to get
\begin{equation*}
\ind (\D^+_{\Ad E_\Psi})\! =\! \int_{X_\psi} \!\!\!\hat{A}(TX_\psi) \mathrm{ch}( \Ad E_\Psi\otimes_\R \C)
\!=\! \int_{X_\psi} \!\!\!\hat{A}(TX_\psi) \bigl(\mathrm{ch}(E_\Psi^* \otimes E_\Psi) - 1\bigr),
\end{equation*}
using $(\Ad E_\Psi\otimes_\R \C) \oplus \C \cong E_\Psi^* \otimes E_\Psi.$ Applying the
same argument to the family $\D^{g_t}_{\Ad \underline\C^m, 0}$ and subtracting yields the desired result.
\end{proof}

\begin{prop}
\label{prop.2.19}
For\/ $\psi,\Psi$ as in Proposition \textup{\ref{prop.2.18}}
and\/ $\dim X = 7$ we have
\begin{equation}
\label{equation.2.6}
\delta(\psi,\Psi) \equiv \int_{X_\psi} c_2(E_\Psi)^2 \equiv \frac12 \int_{X_\psi} p_1(TX_\psi) c_2(E_\Psi)
\mod 2.
\end{equation}
Hence\/ $\Or(\Psi)\coloneqq\Or(\id,\Psi,\es,\es)=\id$ for every\/
$\SU(m)$-automorphism\/ $\Psi:E\ra E$ \textup(this was 
obtained by Walpuski in \textup{\cite[Prop.~6.3]{Walp2}}\textup). Therefore\/ $\Or(\psi,\Psi_1,\es,\es)=\Or(\psi,\Psi_2,\es,\es)$ whenever\/ $\Psi_1$ and\/ $\Psi_2$ cover the same spin diffeomorphism.
\end{prop}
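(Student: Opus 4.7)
The plan is to expand the index-theoretic formula from Proposition \ref{prop.2.18} in degree 8 on the spin 8-manifold $X_\psi$, then kill unwanted terms modulo 2 using standard integrality of twisted Dirac indices. Since $E_\Psi$ carries an $\SU(m)$-structure we have $c_1(E_\Psi)=0$, so
\[
\mathrm{ch}(E_\Psi)=m-c_2+\tfrac12 c_3+\tfrac{1}{12}(c_2^2-2c_4)+\cdots,
\]
and $\mathrm{ch}(E_\Psi^*)$ is the same expression with $c_3$ replaced by $-c_3$. Multiplying out and comparing degrees, the two $c_3$ contributions in degree 6 cancel exactly (this is why no $c_3^2$ will ever appear), while in degrees 4 and 8 we get $-2mc_2$ and $c_2^2+\tfrac{m}{6}(c_2^2-2c_4)$ respectively. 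Combining with $\hat{A}(TX_\psi)=1-\tfrac{p_1}{24}+\cdots$ and subtracting $m^2=\rk(E_\Psi)^2$ gives
\[
\delta(\psi,\Psi)=\int_{X_\psi}\Bigl[c_2^2+\tfrac{m}{6}(c_2^2-2c_4)+\tfrac{m}{12}p_1 c_2\Bigr],
\]
with $c_i=c_i(E_\Psi)$ and $p_1=p_1(TX_\psi)$.

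Next I would invoke the Atiyah--Singer index theorem on $X_\psi$, which is spin by Proposition \ref{prop.2.17}(iv). Both $\int_{X_\psi}\hat A(TX_\psi)\mathrm{ch}(E_\Psi)$ and $m\int_{X_\psi}\hat A(TX_\psi)$ are integers, so their difference gives
\[
\int_{X_\psi}\Bigl[\tfrac{c_2^2-2c_4}{12}+\tfrac{p_1 c_2}{24}\Bigr]\in\Z.
\]
Multiplying by $2m$ shows the last two summands of $\delta(\psi,\Psi)$ contribute an even integer, so $\delta(\psi,\Psi)\equiv\int_{X_\psi}c_2(E_\Psi)^2\pmod 2$. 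The same relation, cleared of denominators and reduced mod $4$, reads $\int_{X_\psi}p_1 c_2\equiv 2\int_{X_\psi}c_2^2\pmod 4$. This simultaneously shows $\int p_1 c_2$ is even (so $\tfrac12\int p_1 c_2$ is well-defined mod 2) and establishes the second congruence in \eqref{equation.2.6}.

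For the corollary $\Or(\Psi)=\id$, specialize to $\psi=\id_X$ so that $X_\psi=X\times \cS^1$. By Künneth, $c_2(E_\Psi)=\alpha+\sigma\w\beta$ with $\alpha\in H^4(X)$, $\beta\in H^3(X)$ and $\sigma$ the generator of $H^1(\cS^1)$. Since $\sigma^2=0$ we get $c_2(E_\Psi)^2=\alpha^2+2\sigma\w(\alpha\beta)$, and $\alpha^2\in H^8(X)=0$ as $\dim X=7$, so
\[
\int_{X\times \cS^1}c_2(E_\Psi)^2=2\int_X\alpha\beta\in 2\Z,
\]
giving $\delta(\id_X,\Psi)\equiv 0\pmod 2$ and hence $\Or(\Psi)=\id$. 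The final statement then follows immediately from functoriality (Theorem \ref{thm.2.15}(i)): given $\Psi_1,\Psi_2$ both covering $\psi$, the composite $\Psi_2\ci\Psi_1^{-1}$ is an $\SU(m)$-automorphism of $\psi^*E$ covering $\id_X$, so
\[
\Or(\psi,\Psi_2,\es,\es)=\Or(\Psi_2\ci\Psi_1^{-1})\ci\Or(\psi,\Psi_1,\es,\es)=\Or(\psi,\Psi_1,\es,\es).
\]

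The main subtleties are bookkeeping points: the fortuitous cancellation of the degree 6 $c_3$ terms (which spares us having to interpret a $c_3^2/4$) and the sharp 2-adic divisibility of the twisted-minus-untwisted Dirac index on the spin mapping torus. Beyond combining Proposition \ref{prop.2.18} with these standard index-theoretic integralities, no new ideas are required.
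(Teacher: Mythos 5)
Your proof is correct and follows the same overall strategy as the paper: expand $\delta(\psi,\Psi)$ in degree $8$, use integrality of the twisted Dirac index $A=\int_{X_\psi}\hat A(TX_\psi)(\mathrm{ch}(E_\Psi)-m)$, and compare $\delta$ with multiples of $A$ to kill the $c_4$ and mixed terms modulo $2$. Both of your congruences ($\delta-2mA=\int c_2^2$ and $24A$ reduced mod $4$) are correct and equivalent to what the paper does with $\delta-(12+2m)A$.

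The one place you genuinely deviate is the concluding step for $\psi=\id_X$. The paper's proof of Proposition~\ref{prop.2.19} uses the \emph{second} congruence $\delta\equiv\tfrac12\int p_1c_2\pmod 2$ together with the fact that $p_1(TX)$ is divisible by $4$ on a spin $7$-manifold, while you use the \emph{first} congruence $\delta\equiv\int c_2^2\pmod 2$ and a K\"unneth computation on $X\times\cS^1$ (noting $H^*(\cS^1)$ is free so there are no Tor terms, $\sigma^2=0$, and $\alpha^2$ lands in the zero group $H^8(X)$). Your argument is in fact precisely the content of the paper's Example~\ref{ex.2.20}, stated there as an alternative derivation of the same conclusion. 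It has the mild advantage of not invoking the $4$-divisibility of $p_1$ on spin $7$-manifolds, which is a less elementary fact. The deduction of $\Or(\psi,\Psi_1,\es,\es)=\Or(\psi,\Psi_2,\es,\es)$ from functoriality via $\Psi_2\circ\Psi_1^{-1}$ covering $\id_X$ is exactly right and makes explicit a step the paper leaves to the reader.

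One small bookkeeping caution: when you say the integral of a class $\alpha^2$ with $\alpha\in H^4(X)$ pulled back to $X\times\cS^1$ vanishes, the right reason is that $\alpha^2\in H^8(X;\Z)=0$ as $X$ has dimension $7$ (this holds whether or not $X$ is compact), not merely that it has the wrong K\"unneth bidegree for the fundamental class of $X\times\cS^1$; you say this correctly. Also note the paper's displayed expression $\delta-(12+2m)A=2\int c_4+\tfrac12\int p_1c_2$ has a sign slip (it should be $-\tfrac12$), which is harmless mod $2$; your route via $24A$ and mod $4$ sidesteps that.
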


\begin{proof} The proof is similar to that of Walpuski \cite[Prop.~6.3]{Walp2}. We have
\begin{equation*}
\delta(\psi,\Psi) = \int_{X_\psi} \frac{m+6}{6}c_2(E_\Psi)^2 - \frac{m}{3}c_4(E_\Psi) + \frac{m}{12} p_1(TX_\psi)c_2(E_\Psi).
\end{equation*}
By the Atiyah--Singer index theorem \cite{AtSi3}
\begin{equation*}
A\coloneqq \int_{X_\psi} \hat{A}(TX_\psi)\bigr( \mathrm{ch}(E_\Psi) - m \bigr)
=
\int_{X_\psi} \frac{c_2(E_\Psi)^2 - 2c_4(E_\Psi)}{12} + \frac{p_1(TX_\psi)c_2(E_\Psi)}{24}
\end{equation*}
is an index and hence an integer. Then \eqref{equation.2.6} follows from
\begin{align*}
\delta(\psi,\Psi) - 2m\cdot A &= \int_{X_\psi} c_2(E_\Psi)^2,\\
\delta(\psi,\Psi) - (12+2m)A &= 2\int_{X_\psi} c_4(E_\Psi) + \frac12 p_1(TX_\psi)c_2(E_\Psi).
\end{align*}
Finally, when $\psi=\id_X$ we have $X_\psi = X\times \cS^1$ and $p_1(TX_\psi)=\pr_X^*p_1(TX).$
On the spin $7$-manifold $X$ the cohomology class $p_1(TX)$ is divisible by four.
\end{proof}

\begin{ex}
\label{ex.2.20}
Let $E \to X$ be a $\SU(m)$-bundle over a compact spin $7$-manifold
with second Chern class Poincar\'e dual to a $3$-submanifold $Y \subset X.$

Let $\Psi\colon E \to E$ be a $\SU(m)$-isomorphism covering a spin
diffeomorphism $\psi\colon X \to X$ satisfying $\psi|_Y = \id_Y.$ Then
we may regard $Y\times \cS^1 \subset X_\psi.$

Formula \eqref{equation.2.6} is the self-intersection (mod $2$) of the
class $\al$ in $H_4(X_\psi)$ Poincar\'e dual to $c_2(E_\Psi).$ We
have $\al = [Y\times \cS^1] + \be$ for some $\be \in H_4(X),$ where $X$ is included into $X_\psi$
at some fixed point of $[0,1].$ As the cross term appears twice and $\be \bullet \be = 0$ in $X_\psi,$ we get
\begin{equation*}
\delta(\psi,\Psi) \equiv \int_{X_\psi} c_2(E_\Psi)^2 \equiv \al\bullet \al
\equiv [Y\times \cS^1]\bullet [Y\times \cS^1] \mod 2.
\end{equation*}
This again shows that $\delta(\id,\Psi)\equiv 0$ when $\psi = \id.$
\end{ex}

\subsection{\texorpdfstring{An example of a non-orientable moduli space $\cB_Q$}{An example of a non-orientable moduli space ℬᵨ}}
\label{subsection.2.4}

Suppose $(X,g)$ is a compact, spin Riemannian 7-manifold, and $G$ is any Lie group, and $Q\ra X$ is a principal $G$-bundle. Then generalizing \S\ref{section.1} we may define moduli spaces $\A_Q$ of connections on $Q$ and $\cB_Q=\A_Q/\G$ of connections on $Q$ modulo gauge, and a principal $\Z_2$-bundle $\bar O^{\D^{\smash{g}}}_Q\ra \A_Q$ parametrizing orientations on $\det\D^g_{\Ad A}$ for~$A\in\A_Q.$

In \S\ref{section.1} we took $G=\SU(m)$ or $\U(m),$ and then Walpuski \cite[Prop.~6.3]{Walp2} and \cite[Ex.~2.13]{JTU} show that $\G$ acts trivially on the set of global sections of $\bar O^{\D^{\smash{g}}}_Q,$ so that $\bar O^{\D^{\smash{g}}}_Q$ descends to a principal $\Z_2$-bundle $O^{\D^{\smash{g}}}_Q\ra\cB_Q,$ which is orientable. But what about other Lie groups~$G$?

This section will give an example of $(X,g)$ for which when $G=\Sp(m)$ for $m\ge 2$ and $Q=X\t\Sp(m)\ra X$ is the trivial $\Sp(m)$-bundle, $\G$ acts non-trivially on the set of global sections of $\bar O^{\D^{\smash{g}}}_Q,$ so that although $\bar O^{\D^{\smash{g}}}_Q$ does in fact descend to a principal $\Z_2$-bundle $O^{\D^{\smash{g}}}_Q\ra\cB_Q,$ this is non-orientable (i.e.\ it has no global sections). Hence the analogue of \cite[Prop.~6.3]{Walp2} is {\it false\/} for $\Sp(m)$-bundles.

A result on stabilizing $\H^m$-bundles \cite[Ex.~2.16]{JTU} implies that if $O^{\D^{\smash{g}}}_Q\ra\cB_Q$ is non-orientable for $Q=X\t\Sp(2)$ the trivial $\Sp(2)$-bundle, then the same holds for $Q=X\t\Sp(m)$ for $m\ge 2.$ So we consider only~$G=\Sp(2).$

To show that $O^{\D^{\smash{g}}}_Q\ra\cB_Q$ is non-orientable, it is enough to find a smooth loop $\ga:\cS^1\ra\cB_Q$ such that the monodromy of $O^{\D^{\smash{g}}}_Q$ around $\ga$ is $-1.$ As in \cite[\S 2.2]{CGJ}, a smooth loop $\ga:\cS^1\ra\cB_Q$ is equivalent to a principal $\Sp(2)$-bundle $R\ra X\t\cS^1$ which is trivial on $X\t\{1\},$ together with a partial connection on $R$ in the $X$ directions, and any such $R$ may be written as the mapping torus $R_f$ of a smooth map $f:X\ra\Sp(2),$ obtained by taking the trivial bundle on $X\times [0,1]$ and identifying endpoints using the gauge transformation~$f\colon X \to \Sp(2).$

As in Walpuski \cite[\S 6.1]{Walp2} or in Proposition~\ref{prop.2.18}, the monodromy of $O^{\D^{\smash{g}}}_Q$ around $\ga$ is $(-1)^{\mathop{\rm SF}(\ga)},$ where $\mathop{\rm SF}(\ga)$ is the spectral flow of the family of elliptic operators $\bigl(\D^g_{\Ad\ga(t)}\bigr)_{t\in\cS^1},$ which may be computed as an index $\mathop{\rm SF}(\ga)=\ind(\D^+_{\Ad(R_f)})$ of the positive Dirac operator $\D^+$ on $X\t\cS^1$ twisted by any connection on~$\Ad(R_f).$ 

Thus, to show that $O^{\D^{\smash{g}}}_Q\ra\cB_Q$ is non-orientable on $X,$ we should find a compact spin 7-manifold $X$ and a smooth $f:X\ra\Sp(2)$ such that $\ind(\D^+_{\Ad(R_f)})$ is odd. We will do this in Example \ref{ex.2.24}, after some initial computations.

\begin{lem} For a\/ $\Sp(2)$-bundle\/ $R$ over an\/ $8$-dimensional base we have
\begin{equation}
\label{equation.2.7}
\smash{\operatorname{ch}(\Ad(R)\otimes_\R \C) = 10 - 6c_2(R) + \frac{3}{2} c_2(R)^2 - 2c_4(R).}
\end{equation}
\end{lem}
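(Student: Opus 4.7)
The plan is to reduce the computation to a purely algebraic identity via the splitting principle, and then verify it by elementary manipulation of symmetric functions.

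First I would identify the adjoint representation. Let $E \to X$ denote the rank $4$ complex vector bundle associated to $R$ through the defining representation $\Sp(2)\hookrightarrow \SU(4)$. The standard isomorphism $\mathfrak{sp}(2)_\C \cong S^2\C^4$ of complex $\Sp(2)$-representations yields a canonical isomorphism of complex vector bundles
\begin{equation*}
\Ad(R)\otimes_\R \C \;\cong\; S^2 E,
\end{equation*}
and, in particular, $c_k(R) = c_k(E)$ on both sides of \eqref{equation.2.7}. So it suffices to prove
$\operatorname{ch}(S^2E) = 10 - 6 c_2(E) + \tfrac{3}{2} c_2(E)^2 - 2 c_4(E)$ in $H^{\le 8}(X;\Q)$.

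Next I would invoke the splitting principle, noting that the symplectic form on $E$ forces the Chern roots to appear in opposite pairs $\pm x_1, \pm x_2$. In particular $c_1(E)=c_3(E)=0$, and
\begin{equation*}
c_2(E) = -(x_1^2+x_2^2), \qquad c_4(E)=x_1^2 x_2^2.
\end{equation*}
The Chern roots of $S^2 E$ are then the sums $y_i+y_j$ for $1\le i\le j\le 4$, where $(y_1,\dots,y_4)=(x_1,-x_1,x_2,-x_2)$. An enumeration gives the ten roots
\begin{equation*}
\bigl\{\pm 2x_1,\;\pm 2x_2,\;0,\;0,\;\pm(x_1+x_2),\;\pm(x_1-x_2)\bigr\}.
\end{equation*}

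Finally I would compute $\operatorname{ch}(S^2 E)=\sum e^{z_i}$ using the even expansion $e^z+e^{-z}=2+z^2+\tfrac{1}{12}z^4+O(z^6)$, which on an $8$-dimensional base suffices. Summing contributions gives rank $10$; the degree-$2$ part is $4x_1^2+4x_2^2+(x_1+x_2)^2+(x_1-x_2)^2=6(x_1^2+x_2^2)=-6c_2(E)$; and the degree-$4$ part is
\begin{equation*}
\tfrac{4}{3}(x_1^4+x_2^4) + \tfrac{1}{12}\bigl[(x_1+x_2)^4+(x_1-x_2)^4\bigr] = \tfrac{3}{2}(x_1^4+x_2^4)+x_1^2x_2^2.
\end{equation*}
Rewriting the right-hand side as $a(x_1^2+x_2^2)^2 + b\, x_1^2 x_2^2$ and matching coefficients yields $a=\tfrac{3}{2}$, $b=-2$, so the degree-$4$ term is $\tfrac{3}{2}c_2(E)^2 - 2 c_4(E)$. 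This establishes \eqref{equation.2.7}.

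The argument has no real obstacle; the only delicate step is the bookkeeping of the ten Chern roots of $S^2 E$ in terms of the symplectic pair $\pm x_1,\pm x_2$, which I would carry out explicitly to avoid sign mistakes.
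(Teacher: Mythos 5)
Your proof is correct and follows essentially the same approach as the paper, which simply invokes Chern roots and the quaternionic splitting principle (reducing to the case where the associated $\H^2$-bundle splits as a sum of quaternionic line bundles, which is exactly your pairing of roots $\pm x_1, \pm x_2$). You supply the details the paper leaves to the reader — the identification $\mathfrak{sp}(2)_\C \cong S^2\C^4$, the enumeration of the ten Chern roots of $S^2 E$, and the degree-by-degree expansion — and the bookkeeping checks out.
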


\begin{proof} This can be computed using Chern roots, meaning it
suffices to establish \eqref{equation.2.7} in the case that the $\H^2$-bundle $(R\t\H^2)/\Sp(2)\ra X$ associated to $R$ is the direct sum of quaternionic line bundles.
\end{proof}

\begin{prop} Let\/ $X$ be a compact spin\/ $7$-manifold and\/ $R\to X\times \cS^1$ a\/ $\Sp(2)$-bundle. Then the index has the parity of the Euler number of\/ $R$:
\begin{equation*}
\ind\, \D^+_{\Ad(R)} \equiv \int_{X\times \cS^1} c_4(R) \mod 2.
\end{equation*}
\end{prop}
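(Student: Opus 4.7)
The plan is to apply the Atiyah--Singer index theorem on the $8$-manifold $X\times\cS^1$, expand via the $\mathrm{ch}$-formula \eqref{equation.2.7} and the Pontryagin expansion of $\hat{A}$, and then combine a Künneth decomposition with the integrality of an \emph{auxiliary} Dirac index to extract the mod-$2$ reduction.

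First I would apply Atiyah--Singer:
$$\ind\D^+_{\Ad R} = \int_{X\times\cS^1} \hat{A}(T(X\times\cS^1))\,\mathrm{ch}(\Ad R\otimes_\R \C).$$
Since $T(X\times\cS^1)\cong\pi_X^*TX\oplus\ul\R$, its Pontryagin classes are pullbacks from $X$, and $\dim X=7$ forces $H^8(X)=0$, so both $p_1^2$ and $p_2$ of $T(X\times\cS^1)$ vanish. Hence only $1-\frac{1}{24}p_1$ of $\hat{A}$ contributes; extracting the degree-$8$ component using \eqref{equation.2.7} yields
$$\ind\D^+_{\Ad R} = \int_{X\times\cS^1}\frac{3}{2}c_2(R)^2-2c_4(R)+\frac{1}{4}p_1 c_2(R).$$

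Next I would use Künneth. Write $c_2(R)=a+b\wedge\d t$ with $a\in H^4(X)$, $b\in H^3(X)$, and $c_4(R)=d\wedge\d t$ with $d\in H^7(X)$ (the $H^8(X)$-part is zero). The key observation is $a^2\in H^8(X)=0$, so $c_2(R)^2=2(ab)\wedge\d t$ and $\int c_2(R)^2=2\int_X ab$ is automatically even. Likewise $\int p_1 c_2(R)=\int_X p_1(TX)\,b$ and $\int c_4(R)=\int_X d$, so the desired congruence reduces to
$$3\int_X ab-2\int_X d+\frac{1}{4}\int_X p_1(TX)\,b \equiv \int_X d \mod 2.$$

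To handle the quarter, I would apply Atiyah--Singer to the Dirac operator twisted by the underlying rank-$4$ complex bundle $E$ of $R$. The quaternionic structure (equivalently $E\cong E^*$) forces $c_1(E)=c_3(E)=0$, so $\mathrm{ch}(E)=4-c_2(R)+\frac{c_2(R)^2-2c_4(R)}{12}$, giving
$$\ind\D^+_E = \frac{4\int_X ab-4\int_X d+\int_X p_1(TX)\,b}{24}\in\Z.$$
Reducing mod $8$ yields $\int_X p_1(TX)\,b\equiv 4(\int_X d-\int_X ab)\mod 8$, so $\int_X p_1(TX)\,b$ is divisible by $4$ and $\frac{1}{4}\int_X p_1(TX)\,b\equiv \int_X d-\int_X ab \mod 2$. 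Substituting, $\ind\D^+_{\Ad R}\equiv \int_X ab + (\int_X d - \int_X ab) = \int c_4(R)\mod 2$, as required.

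The main obstacle I foresee is that the $\frac{1}{4}p_1 c_2$ term is not \emph{a priori} integral, and essentially all the arithmetic content of the statement is concentrated there. The decisive point is that the $\Sp(2)$-structure on $R$ automatically supplies a complex bundle $E$ with vanishing odd Chern classes, whose elliptic index $\ind\D^+_E\in\Z$ furnishes exactly the congruence needed to determine $\frac{1}{4}\int p_1 c_2$ modulo $2$.
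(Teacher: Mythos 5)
Your proof is correct, and it reaches the same conclusion by a genuinely different route from the paper. The paper avoids any Künneth decomposition: it observes directly from \eqref{equation.2.7} and the $\hat{A}$-expansion that
\begin{equation*}
\ind\,\D^+_{\Ad(R)} + 6\cdot\ind\,\D^+_R \;=\; \int_{X\times\cS^1} \tfrac{1}{2}p_1(TX)c_2(R) + 2c_2(R)^2 - 3c_4(R),
\end{equation*}
where $\D^+_R$ is the auxiliary Dirac operator twisted by the underlying rank-$4$ complex bundle (your $E$). It then cites the fact that $p_1(TX)$ is divisible by four on a compact spin $7$-manifold (this follows from Wu's formula, which forces $w_4(X)=0$), so the first term on the right is even, and the congruence follows immediately since the other two terms are $\equiv\int c_4\pmod 2$. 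Your argument uses the \emph{same} auxiliary index $\ind\,\D^+_E\in\Z$ but packages the arithmetic differently: you pass through the Künneth decomposition of $c_2(R)$ and $c_4(R)$ to kill $\int c_2^2$ explicitly, and then extract the divisibility of $\int_X p_1(TX)\,b$ by $4$ directly from the integrality of $\ind\,\D^+_E$, rather than from the a priori divisibility of $p_1(TX)$. This is a modest advantage — you do not need to invoke Wu's formula — at the cost of the Künneth bookkeeping. The paper's single linear combination of the two indices is the more streamlined presentation, and it keeps the proof on an equal footing with the strictly analogous $\SU(m)$ computation in Proposition~\ref{prop.2.19}.
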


\begin{proof} Using \eqref{equation.2.7} we find that
\begin{equation*}
\ind\, \D^+_{\Ad(R)} \,+\, 6\cdot\ind\, \D^+_R
= \int_{X\times \cS^1} \frac{p_1(TX)c_2(R)}{2} + 2c_2(R)^2 - 3c_4(R).
\end{equation*}
As $p_1(TX)$ is divisible by four, the first summand on the right is even.
\end{proof}

Let $R_f$ be the mapping torus bundle over $X\times \cS^1$ of a smooth $f\colon X \to \Sp(2).$ Then the Euler number of $R_f$ is the degree of
\begin{equation*}
X\,{\buildrel f\over\longra}\, \Sp(2) \,{\buildrel \pi\over\longra}\, \Sp(2)/\Sp(1) = \cS^7.
\end{equation*}
For non-orientability, we seek $X$ and $f$ such that this degree is odd.

\begin{ex} Let $X=\cS^7$ and $f\colon \cS^7 \to \Sp(2)$ be smooth. Then the degree of $\pi\circ f$ is always divisible by $12$ and therefore $O^{\D^{\smash{g}}}_Q\ra\cB_Q$ is orientable, for $Q$ the trivial $\Sp(2)$-bundle over $\cS^7.$ To see this, consider the long exact sequence of homotopy groups of the fibration~$\pi$:
\begin{equation*}
\xymatrix@C=23pt{
\cdots \ar[r]&\pi_7 \Sp(2)\ar[r]^{\pi_*} & \pi_7 \cS^7 = \Z\ar[r]^\partial & \pi_6 \Sp(1)\ar[r] & \pi_6 \Sp(2)\ar[r]
& \cdots }
\end{equation*}
As $\pi_6 \Sp(1) = \Z_{12}$ and $\pi_6 \Sp(2) = \{0\},$ the cokernel of $\pi_*$ is $\Z_{12}.$
Hence orientability holds for the moduli space of $\Sp(2)$-connections on $\cS^7.$
\end{ex}

\begin{ex}
\label{ex.2.24}
For $\bigl(\begin{smallmatrix} a & b\\ c & d \end{smallmatrix}\bigr)\in \Sp(2)$ and $q\in \Sp(1),$ define
\begin{equation*}
M \coloneqq \begin{pmatrix}
|a|^2 + bq\bar{b} & a\bar{c} + bq\bar{d}\\
c\bar{a}+dq\bar{b} & |c|^2 + dq\bar{d}
\end{pmatrix}.
\end{equation*}
Then $M\in \Sp(2).$ Replacing $(a,b,c,d,q)$ by $(ar,bs,cr,ds,\bar{s}qs)$ for $r,s\in \Sp(1)$ does not change the matrix $M.$
Hence the formula defines a map
\begin{equation*}
f\colon X= \Sp(2)\times_{\Sp(1)\times \Sp(1)} \Sp(1) \longra \Sp(2),
\end{equation*}
where the diagonal subgroup $(r,s)\in \Sp(1)\times \Sp(1)\subset\Sp(2)$ acts on $q\in \Sp(1)$ by conjugating with the second factor $q\mapsto sq\bar{s}.$ It is easy to see that ${\operatorname{deg}(\pi\circ f) = 1}.$ It follows that $O^{\D^{\smash{g}}}_Q\ra\cB_Q$ is non-orientable, where $Q=X\t\Sp(2)\ra X$ is the trivial $\Sp(2)$-bundle over $X.$
\end{ex}

In the sequel \cite[Ex.~1.14]{CGJ} we will use Example \ref{ex.2.24} to find non-orientable moduli spaces of $\Sp(m)$-connections $\cB_Q$ for $m\ge 2$ on the 8-manifold~$X\t\cS^1.$

\section{Flag structures}
\label{section.3}

We recall the following from~\cite[\S 3.1]{Joyc3}. Here $X$ is not assumed to be compact.

\begin{dfn}
Let $X$ be an oriented 7-manifold, and consider pairs $(Y,s)$ of a compact, oriented $3$-submanifold $Y\subset X,$ and a non-vanishing section $s$ of the normal bundle $N_Y$ of $Y$ in $X.$
We call $(Y,s)$ a {\it flagged submanifold\/} in~$X.$

For non-vanishing sections $s, s'$ of $N_Y$ define
\begin{equation}
d(s,s') \coloneqq Y \bullet \bigl\{ t\cdot s(y) + (1-t)\cdot s'(y) \enskip\big|\enskip t\in [0,1],\enskip y\in Y\bigr\} \in \Z,
\label{equation.3.1}
\end{equation}
using the intersection product `$\bu$' between a $3$-cycle and a $4$-chain
whose boundary does not meet the cycle, see Dold \cite[(13.20)]{Dold}, where we identify $Y$ with the zero section in~$N_Y.$

Let $(Y_1,s_1),(Y_2,s_2)$ be disjoint flagged submanifolds with $[Y_1]=[Y_2]$ in $H_3(X;\Z).$ Choose an integral 4-chain $C$ with $\pd C=Y_2-Y_1.$ Let $Y_1',Y_2'$ be small perturbations of $Y_1,Y_2$ in the normal directions $s_1,s_2.$ Then $Y_1'\cap Y_1=Y_2'\cap Y_2=\es$ as $s_1,s_2$ are non-vanishing, and $Y_1'\cap Y_2=Y_2'\cap Y_1=\es$ as $Y_1,Y_2$ are disjoint and $Y_1',Y_2'$ are close to $Y_1,Y_2.$ Define $D((Y_1,s_1),(Y_2,s_2))$ to be the intersection number $(Y_2'-Y_1')\bu C$ in homology over $\Z.$ Here we regard
\begin{equation*}
[C] \in H_4(X,Y_1\cup Y_2;\Z),\qquad
[Y'_1], [Y'_2] \in H_3(Y'_1\cup Y'_2,\emptyset;\Z).
\end{equation*}
Note that since $Y'_1, Y'_2$ are small perturbations and $Y_1, Y_2$ are disjoint we have $(Y_1 \cup Y_2) \cap (Y'_1 \cup Y'_2) = \emptyset.$
This is independent of the choices of $C$ and~$Y_1',Y_2'.$
\end{dfn}

In \cite[Prop.s~3.3 \& 3.4]{Joyc3} the first author shows that if $(Y_1,s_1),(Y_2,s_2),\ab(Y_3,s_3)$ are disjoint flagged submanifolds with $[Y_1]=[Y_2]=[Y_3]$ in $H_3(X;\Z)$ then
\e
\label{equation.3.2}
\begin{split}
D((Y_1,s_1),(Y_3,s_3))&\equiv D((Y_1,s_1),(Y_2,s_2))\\
&\qquad +D((Y_2,s_2),(Y_3,s_3))\mod 2,
\end{split}
\e
and if $(Y',s')$ is any small deformation of $(Y,s)$ with $Y,Y'$ disjoint then
\e
D((Y,s),(Y',s'))\equiv 0\mod 2.
\label{equation.3.3}
\e

\begin{dfn}
\label{dfn.3.2}
A {\it flag structure\/} on $X$ is a map
\begin{equation*}
F:\bigl\{\text{flagged submanifolds $(Y,s)$ in $X$}\bigr\}\longra\{\pm 1\},
\end{equation*}
satisfying:
\begin{itemize}
\setlength{\itemsep}{0pt}
\setlength{\parsep}{0pt}
\item[(i)] $F(Y,s) = F(Y,s')\cdot (-1)^{d(s,s')}.$
\item[(ii)] If $(Y_1,s_1),(Y_2,s_2)$ are disjoint flagged submanifolds in $X$ with $[Y_1]=[Y_2]$ in $H_3(X;\Z)$ then
\begin{equation*}
F(Y_2,s_2)=F(Y_1,s_1)\cdot (-1)^{D((Y_1,s_1),(Y_2,s_2))}.
\end{equation*}
This is a well behaved condition by \eq{equation.3.2}--\eq{equation.3.3}.
\item[(iii)] If $(Y_1,s_1),(Y_2,s_2)$ are disjoint flagged submanifolds then
\end{itemize}
\begin{equation*}
F(Y_1\amalg Y_2,s_1\amalg s_2)=F(Y_1,s_1)\cdot F(Y_2,s_2).
\end{equation*}
Flag structures restrict to open subsets in the obvious way.
\end{dfn}

Here is \cite[Prop.~3.6]{Joyc3}:

\begin{prop}
\label{prop.3.3}
Let\/ $X$ be an oriented\/ $7$-manifold. Then:
\begin{itemize}
\setlength{\itemsep}{0pt}
\setlength{\parsep}{0pt}
\item[{\bf(a)}] There exists a flag structure\/ $F$ on\/ $X.$
\item[{\bf(b)}] If\/ $F,F'$ are flag structures on\/ $X$ then there exists a unique group morphism\/ $H_3(X;\Z)\ra\{\pm 1\},$ denoted\/ $F'/F,$ such that
\e
F'(Y,s)=F(Y,s)\cdot (F'/F)[Y] \qquad\text{for all\/ $(Y,s).$}
\label{equation.3.4}
\e
\item[{\bf(c)}] Let\/ $F$ be a flag structure on\/ $X$ and\/ $\ep:H_3(X;\Z)\ra\{\pm 1\}$ a morphism, and define\/ $F'$ by \eq{equation.3.4} with\/ $F'/F=\varepsilon.$ Then\/ $F'$ is a flag structure on\/~$X.$
\end{itemize}
Hence the set of flag structures on\/ $X$ is a torsor over\/ $\Hom\bigl(H_3(X;\Z),\Z_2\bigr).$
\end{prop}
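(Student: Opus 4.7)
The plan is to address (b) and (c) first as warm-ups, and then (a), which I expect to be the main obstacle.

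For (b), I would define $(F'/F)(Y,s):=F'(Y,s)\cdot F(Y,s)\in\{\pm 1\}$. Axiom (i) of Definition \ref{dfn.3.2} applied to both $F$ and $F'$ shows the sign $(-1)^{d(s,s')}$ cancels in the ratio, so $(F'/F)(Y,s)$ is independent of $s$. For two disjoint flagged submanifolds $(Y_1,s_1),(Y_2,s_2)$ with $[Y_1]=[Y_2]$ in $H_3(X;\Z)$, the analogous cancellation in axiom (ii) shows the ratio depends only on the homology class. For non-disjoint pairs with the same class I would reduce to the disjoint case by a small normal perturbation of one of them, using \eqref{equation.3.3} to ensure the perturbation does not alter the ratio. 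Finally axiom (iii) gives multiplicativity of the ratio under disjoint union, so $F'/F$ descends to a group morphism $H_3(X;\Z)\to\{\pm 1\}$ satisfying \eqref{equation.3.4}; uniqueness is immediate because every class in $H_3(X;\Z)$ is represented by some oriented 3-submanifold.

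For (c), I would verify directly that $F'$ defined by \eqref{equation.3.4} satisfies axioms (i)--(iii). Axiom (i) holds because the extra factor $\varepsilon([Y])$ depends only on $Y$, not $s$. Axiom (ii) holds because $\varepsilon([Y_1])=\varepsilon([Y_2])$ when $[Y_1]=[Y_2]$. Axiom (iii) reduces to $\varepsilon([Y_1\amalg Y_2])=\varepsilon([Y_1])\cdot \varepsilon([Y_2])$, which holds since $\varepsilon$ is a group morphism and $[Y_1\amalg Y_2]=[Y_1]+[Y_2]$.

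The substantive work lies in (a). By (b) and (c), the set of flag structures is either empty or a torsor over $\Hom(H_3(X;\Z),\Z_2)$, so it suffices to exhibit a single flag structure. The strategy I would pursue is a direct topological construction: $Y$ is an oriented 3-submanifold of an oriented 7-manifold $X$, so the normal bundle $N_Y$ is an oriented rank-4 bundle, and the nowhere-zero section $s$ determines an oriented rank-3 quotient $N_Y/\langle s\rangle$. I would attempt to define $F(Y,s)$ via a characteristic number of $N_Y/\langle s\rangle$, possibly combined with further topological data of $X$, and then verify (i)--(iii) using standard properties of characteristic classes together with the identities \eqref{equation.3.2}--\eqref{equation.3.3}. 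The hard part is pinning down an explicit formula whose $s$-dependence matches $(-1)^{d(s,s')}$ precisely and whose behaviour under homologous deformations matches the intersection number appearing in the definition of $D$. A fall-back would be to fix representatives of a generating set of $H_3(X;\Z)$, set $F=+1$ on these, extend to arbitrary flagged submanifolds by axiom (ii), and then check consistency via \eqref{equation.3.2}--\eqref{equation.3.3}; but in this approach axiom (iii) still requires a separate consistency verification under disjoint union.
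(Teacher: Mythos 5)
The paper gives no proof of Proposition~\ref{prop.3.3}; it is quoted verbatim from \cite[Prop.~3.6]{Joyc3}, so there is no in-text argument to compare against. Assessed on its own terms, your treatment of (b) and (c) is essentially correct. For (b), the ratio $(F'/F)(Y,s)$ is visibly independent of $s$ by axiom (i), constant on homology classes via axiom (ii) and the perturbation argument using \eqref{equation.3.3}, and multiplicative by axiom (iii); uniqueness does rely on the (true, but worth stating) fact that every class in $H_3(X;\Z)$ is represented by a compact oriented embedded $3$-submanifold (Thom realizability plus general position in codimension $4$). For (c), the verification is a routine check and your argument is complete.

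The gap is in (a), which you yourself flag as unresolved, and it is the crux of the proposition. Your primary idea -- a formula for $F(Y,s)$ via a characteristic number of $N_Y/\langle s\rangle$ -- cannot work as stated: such a number is a local invariant of the germ of $Y$ in $X$, whereas axiom (ii) constrains $F$ by a global intersection number $D((Y_1,s_1),(Y_2,s_2))$ involving a $4$-chain $C$ in $X$ with $\partial C = Y_2 - Y_1$. Adding ``further topological data of $X$'' is hand-waving without a concrete candidate. Your fall-back (fix reference flagged representatives, extend via $D$ and axiom (ii)) handles axioms (i) and (ii) automatically from \eqref{equation.3.2}--\eqref{equation.3.3}, but for axiom (iii) one is left needing $D\bigl(\sigma(\alpha+\beta),\,\sigma(\alpha)\amalg\sigma(\beta)\bigr)\equiv 0 \bmod 2$ for the chosen section $\sigma$ of $H_3(X;\Z)$. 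This is a symmetric $\Z_2$-valued $2$-cocycle on $H_3(X;\Z)$ and must be shown to be a coboundary, either by a clever multiplicative choice of $\sigma$ (accounting for signs and torsion) or by a direct cohomological vanishing argument; neither is supplied. So the existence part of the proposition remains unproved in your proposal.
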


\begin{ex} Every oriented 7-manifold $X$ with $H_3(X;\Z)=0$ has a unique flag structure. More generally, a basis $[Y_i]$ of the image of $H_3(X;\Z)\!\to\! H_3(X;\Z_2)$ for submanifolds $Y_i\subset X$ with chosen normal sections $s_i$ induce a unique flag structure $F$ with $F(Y_i,s_i) = 1.$ For example, $\cS^7$ has a unique flag structure $F_{\cS^7},$ and $Y^3\times \cS^4$ has a preferred flag structure $F$ with $F(Y\t\{x\},Y\t v)=1$ for any $x\in\cS^4$ and~$0\ne v\in T_x\cS^4.$
\end{ex}

\begin{dfn} Let $\psi \colon X' \to X$ be an orientation-preserving diffeomorphism. The \emph{pullback} of the flag structure $F$ on $X$
is $(\psi^*F)(Y',s') \coloneqq F(\psi(Y'), \d\psi\circ s').$ The pushforward is defined to be the
pullback along $\psi^{-1}.$
\end{dfn}

When $X'=X$ we can compare a flag structure to its pullback along $\psi$:

\begin{prop}
\label{prop.3.6}
Let\/ $X$ be an oriented\/ $7$-manifold and\/ $Y\subset X$ a compact oriented\/ $3$-submanifold. Suppose\/ $\psi\colon X\to X$ is an orientation-preserving diffeomorphism with\/ $\psi|_Y = \id_Y.$ Then\/ $(F/\psi^*F)[Y]=(-1)^{[Y\t\cS^1]\bu[Y\t\cS^1]}$ for any flag structure\/ $F$ on\/ $X,$ where\/ $[Y\t\cS^1]\bu[Y\t\cS^1]$ is the self-intersection of\/ $Y\times \cS^1$ in the mapping torus\/ $X_\psi.$
\end{prop}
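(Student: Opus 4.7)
The plan is to peel off the algebraic layer of the statement using the flag-structure axioms, and then reduce the remaining claim to a transverse zero-count in the normal bundle of $Y\t\cS^1\subset X_\psi$.

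First, since $\psi|_Y=\id_Y$ we have $\psi(Y)=Y$, so the pullback of a flag structure satisfies $(\psi^*F)(Y,s)=F(\psi(Y),\d\psi\circ s)=F(Y,\d\psi\circ s)$ for any non-vanishing section $s$ of $N_Y$. Applying Definition \ref{dfn.3.2}(i) to the pair $s,\d\psi\circ s$ gives
$$F(Y,s)=F(Y,\d\psi\circ s)\cdot(-1)^{d(s,\d\psi\circ s)}.$$
Combining this with the defining identity $F(Y,s)=(\psi^*F)(Y,s)\cdot (F/\psi^*F)[Y]$ from Proposition \ref{prop.3.3}(b) immediately yields $(F/\psi^*F)[Y]=(-1)^{d(s,\d\psi\circ s)}$. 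Hence the entire task reduces to establishing the congruence $d(s,\d\psi\circ s)\equiv [Y\t\cS^1]\bu[Y\t\cS^1]\pmod{2}$ computed in $X_\psi$.

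Next I would identify the normal bundle $N_{Y\t\cS^1/X_\psi}$ explicitly. Because $\psi$ fixes $Y$ pointwise and preserves orientation, $\d\psi$ restricts to a bundle automorphism $\d\psi|_{N_Y}\colon N_Y\ra N_Y$ covering $\id_Y$. A standard local model of the mapping torus near $Y\t\cS^1$ then identifies $N_{Y\t\cS^1/X_\psi}$ with the mapping torus of $\d\psi|_{N_Y}$, that is, the quotient of $N_Y\t[0,1]$ by $(v,1)\sim(\d\psi(v),0)$. The linear homotopy $\sigma_t=(1-t)s+t\cdot(\d\psi\circ s)$, $t\in[0,1]$, satisfies the compatibility $\sigma_1=\d\psi\circ\sigma_0$ and therefore descends to a well-defined global section $\bar\sigma$ of $N_{Y\t\cS^1/X_\psi}$.

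The final step is to compute the self-intersection via $\bar\sigma$. After a small generic perturbation making it transverse to the zero section, $[Y\t\cS^1]\bu[Y\t\cS^1]$ equals the signed count of zeros of $\bar\sigma$. On the other hand, equation \eq{equation.3.1} expresses $d(s,\d\psi\circ s)$ as the signed intersection of the zero section $Y$ with the $4$-chain swept out by exactly the same linear homotopy $\sigma_t$ inside a tubular neighbourhood of $Y$ in $X$, which is again the signed count of zeros of the path $\sigma_t$. Hence the two integers agree mod $2$, proving the desired formula.

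The main obstacle is the middle step: identifying $N_{Y\t\cS^1/X_\psi}$ as a mapping torus compatibly with the orientation conventions used in the two intersection pairings, and verifying that a generic perturbation of $\bar\sigma$ can be taken to come from a deformation of the path $\sigma_t$ inside $N_Y$, so that its zero set literally coincides with the one counted by $d(s,\d\psi\circ s)$. The algebraic reduction carried out in the first paragraph, by contrast, is immediate from the definitions.
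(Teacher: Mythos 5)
Your proposal follows essentially the same approach as the paper's proof. Both reduce $(F/\psi^*F)[Y]$ to $(-1)^{d(s,\d\psi\circ s)}$ via Proposition \ref{prop.3.3}(b) and Definition \ref{dfn.3.2}(i), identify $N_{Y\times\cS^1/X_\psi}$ as the mapping torus of $\d\psi|_{N_Y}$, descend the linear homotopy $(1-t)s + t\cdot\d\psi\circ s$ to a normal section, and match the intersection counts; the paper phrases the last step as $Y\bullet\Im(\Gamma)=(Y\times\cS^1)\bullet\Im(\hat\Gamma)$ followed by homotoping $\hat\Gamma$ to the zero section, whereas you phrase it as equating transverse zero counts, which is the same computation.

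One small point worth flagging, which affects both your write-up and the paper's and is harmless mod $2$: with the gluing convention $(v,1)\sim(\d\psi(v),0)$ for the mapping torus $N_{\d\psi}$, a path $\Gamma_t$ of sections of $N_Y$ descends to a global section when $\d\psi(\Gamma_1)=\Gamma_0$, i.e.\ $\Gamma_1=(\d\psi)^{-1}\circ\Gamma_0$, not $\Gamma_1=\d\psi\circ\Gamma_0$ as written. Taking $\Gamma_t=(1-t)\d\psi(s)+ts$ (i.e.\ running the same segment in the opposite direction) satisfies the correct gluing, and its image is the same $4$-chain up to sign, so the resulting intersection numbers agree mod $2$ and the conclusion is unchanged.
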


\begin{proof} Pick $s\colon Y \to N_Y$ non-vanishing.
Let ${\Gamma(y,t) \coloneqq (1-t)s(y) + t\cdot \d\psi\circ s(y)}$
for $y\in Y$ and $t\in [0,1].$ By Proposition \ref{prop.3.3}(b), Definition \ref{dfn.3.2}(i) and \eqref{equation.3.1}
\begin{equation*}
(F/\psi^*F)[Y] = F(Y,s) \cdot F(Y,\d\psi\circ s)^{-1} = (-1)^{d(s,\d\psi\circ s)} = (-1)^{Y\bullet \Im(\Gamma)}.
\end{equation*}
The normal bundle of $Y\times \cS^1$ in $X_\psi$
is the mapping torus of $\d\psi\colon N_Y \to N_Y,$ so
we can regard $\Gamma$ as a normal section of $Y\times \cS^1$ in $X_\psi$ by
$\hat\Gamma(y,t) \coloneqq [\Gamma(y,t),t].$ There being no intersection points at $t=0,1$ we have
${Y\bullet \Im(\Gamma)} = {(Y\times \cS^1)\bullet \Im(\hat\Gamma)}.$
Finally, by taking $s$ to zero $\hat\Gamma$ is homologous to $Y\times \cS^1$ in $X_\psi.$
\end{proof}

\section{Canonical orientations}
\label{section.4}

This section proves our main result Theorem \ref{thm.1.2}, following the outline in~\S\ref{section.1}.

\renewcommand\thesubsection{\thesection(\Alph{subsection})}
\renewcommand\thesubsubsection{\thesection(\Alph{subsection})(\roman{subsubsection})}

\subsection{The construction of orientations $o^F_{Y,\rho,\io,\Psi}(E)$}

\subsubsection{Stably trivial bundles}

We first extend $o^\mathrm{flat}(\underline{\C}^m) \in \Or_{\underline{\C}^m}$ from Definition~\ref{dfn.2.13} to stably trivial bundles.

\begin{dfn}
\label{dfn.4.1}
Let $E \to X$ be a stably trivial $\SU(m)$-bundle over a compact spin $7$-manifold. Then we find a $\SU(m+k)$-isomorphism $\Phi\colon E\oplus \underline{\C}^k \to \underline{\C}^{m+k}$
over $\id_X.$ Using \eqref{equation.2.3} and $\Or(\Phi)$
we can identify $\Or_E$ with $\Z_2.$ That is,
there exists a unique $o^\mathrm{flat}(E) \in \Or_E$ satisfying
\e
\label{equation.4.1}
\Or(\Phi)\bigl(
o^\mathrm{flat}(E) \cdot o^\mathrm{flat}(\underline\C^k)
\bigr)
=  o^\mathrm{flat}(\underline\C^{m+k}),
\e
using the product notation defined after Proposition~\ref{prop.2.14}.
\end{dfn}

\begin{prop}
\label{prop.4.2}
These orientations have the following properties:
\begin{itemize}
\setlength{\itemsep}{0pt}
\setlength{\parsep}{0pt}
\item[{\bf(i)}]\textup{(Well-defined.)}~The definition of\/ $o^\mathrm{flat}(E)$ is independent of\/ $k$ and\/ $\Phi.$
\item[{\bf(ii)}]\textup{(Families.)}~Let\/ $P$ be compact Hausdorff and\/ $E\to X\times P$ stably trivial. Then\/ $p\mapsto o^\mathrm{flat}(E|_{X\times \{p\}})$ is a continuous section of the double cover\/~$\Or_E.$
\item[{\bf(iii)}]\textup{(Functoriality.)}~Let\/ $E_1$ be a\/ $\SU(m_1)$-bundle and\/ $E_2$ a $\SU(m_2)$-bundle over\/ $X,$ both stably trivial. Let\/ $\ell_1, \ell_2 \in \N$ and let\/ ${\Psi\colon E_1 \oplus \underline\C^{\ell_1} \to E_2 \oplus \underline\C^{\ell_2}}$ be a\/ $\SU$-isomorphism. Then\/ $\Or(\Psi)(o^\mathrm{flat}(E_1\oplus \underline\C^{\ell_1}))=o^\mathrm{flat}(E_2\oplus \underline\C^{\ell_2}).$
\item[{\bf(iv)}]\textup{(Stability.)}~$o^\mathrm{flat}(E\oplus \underline\C^\ell)=o^\mathrm{flat}(E) \cdot o^\mathrm{flat}(\underline\C^\ell).$
\item[{\bf(v)}]\textup{(Direct sums.)}~Let\/ $E_1,E_2\ra X$ be\/ $\SU(m_1)$- and\/ $\SU(m_2)$-bundles, both stably trivial. Then $o^\mathrm{flat}(E_1 \oplus E_2) = o^\mathrm{flat}(E_1) \cdot o^\mathrm{flat}(E_2).$
\end{itemize}
\end{prop}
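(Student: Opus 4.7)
The plan is to deduce all five properties from the defining equation~\eqref{equation.4.1}, leveraging Proposition~\ref{prop.2.19} (Walpuski's theorem: $\Or$ sends every $\SU$-automorphism to the identity), the torsor calculus for $\Or$ under direct sums (Proposition~\ref{prop.2.14}), and the functoriality, direct-sum and families properties of excision (Theorem~\ref{thm.2.15}). The crux, and main obstacle, is (i); once it is in place, the remaining items reduce to formal bookkeeping in the $\Z_2$-torsor calculus.

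For (i), independence of $\Phi$ is the decisive input. Given two trivializations $\Phi_1,\Phi_2\colon E\op\underline{\C}^k\to\underline{\C}^{m+k}$, the composite $\Phi_2\circ\Phi_1^{-1}$ is an $\SU(m+k)$-automorphism of $\underline{\C}^{m+k}$, so Proposition~\ref{prop.2.19} yields $\Or(\Phi_2\circ\Phi_1^{-1})=\id$, and Theorem~\ref{thm.2.15}(i) then gives $\Or(\Phi_1)=\Or(\Phi_2)$, so both choices assign the same $o^\mathrm{flat}(E)$ via~\eqref{equation.4.1}. Independence of $k$ is obtained by stabilization: for $\Phi\colon E\op\underline{\C}^k\to\underline{\C}^{m+k}$ the trivialization $\Phi\op\id_{\underline{\C}^\ell}$ at rank $k+\ell$ assigns the same orientation, using the direct-sum compatibility of excision (Theorem~\ref{thm.2.15}(iv)) together with $\Or(\id)=\id$ and the unital and associative axioms for $\lambda$ (Proposition~\ref{prop.2.14}(ii),(iv)) to cancel the extra $o^\mathrm{flat}(\underline{\C}^\ell)$-factor on both sides of~\eqref{equation.4.1}. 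Any two $(k,\Phi)$ and $(k',\Phi')$ then compare through their common stabilization at rank $k+k'$ by $\Phi$-independence.

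Properties (iv), (iii), (v) follow formally. For (iv), the stable trivialization of $E\op\underline{\C}^\ell$ obtained by canonically rearranging summands and applying $\Phi\op\id_{\underline{\C}^\ell}$ (with rearrangements sign-free since indices vanish in odd dimensions, Proposition~\ref{prop.2.14}(iii)) reduces~\eqref{equation.4.1} for $E\op\underline{\C}^\ell$ to~\eqref{equation.4.1} for $E$ times an extra $o^\mathrm{flat}(\underline{\C}^\ell)$-factor. For (iii), choose stable trivializations $\Phi_i\colon E_i\op\underline{\C}^{k_i}\to\underline{\C}^{m_i+k_i}$; after padding with $\underline{\C}$-summands to equalize ranks, the composite $(\Phi_2\op\id)\circ(\Psi\op\id)\circ(\Phi_1\op\id)^{-1}$ is an $\SU$-automorphism of a trivial bundle, on which $\Or$ acts trivially by Proposition~\ref{prop.2.19}. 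For (v), apply (iii) to the canonical $\SU$-isomorphism $(E_1\op E_2)\op\underline{\C}^{k_1+k_2}\cong(E_1\op\underline{\C}^{k_1})\op(E_2\op\underline{\C}^{k_2})$ combined with the stable trivializations of $E_1, E_2$, using the functoriality of $\lambda$ under direct sums (Theorem~\ref{thm.2.15}(iv)) to split off the two factors.

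Finally (ii) is essentially immediate: stable triviality of $E\to X\t P$ yields a single global $\SU$-isomorphism $\Phi\colon E\op\underline{\C}^k\to\underline{\C}^{m+k}$ over all of $X\t P$, and the families versions of excision (Theorem~\ref{thm.2.15}(ii)) and of $\lambda$ (Proposition~\ref{prop.2.14}(i)) show that~\eqref{equation.4.1} holds fibrewise in a continuous manner, exhibiting $p\mapsto o^\mathrm{flat}(E|_{X\t\{p\}})$ as a continuous section of the double cover $\Or_E\to P$.
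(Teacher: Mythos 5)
Your proposal is correct and follows essentially the same route as the paper's: independence of $\Phi$ is drawn from Proposition~\ref{prop.2.19} (the composite $\Phi_2\circ\Phi_1^{-1}$ being an $\SU$-automorphism of a trivial bundle), independence of $k$ from the stabilization $\Phi\mapsto\Phi\oplus\id_{\underline\C^\ell}$ using the direct-sum compatibility of excision and the unital/associative/commutative axioms of $\lambda$, and then (iii)--(v) by the same torsor bookkeeping; your (ii) just makes explicit the global stable trivialization that the paper's one-line remark leaves implicit.
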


\begin{proof}(i) Given $\ell \in \N$ and $\Phi\colon E\oplus \underline\C^k \to \underline\C^{m+k}$ we use the properties of Proposition~\ref{prop.2.14}
and find
\begin{align*}
o^\mathrm{flat}(\underline\C^{m+k+\ell})
&=
o^\mathrm{flat}(\underline\C^{m+k}) \cdot o^\mathrm{flat}(\underline\C^{\ell}) &&\text{(unital)}\\
&=
\Or(\Phi)\bigl( o^\mathrm{flat}(E)\cdot o^\mathrm{flat}(\underline\C^k) \bigr)\cdot o^\mathrm{flat}(\underline\C^{\ell}) &&\text{(by \eqref{equation.4.1})}\\
&=
\Or(\Phi\oplus\id_{\underline\C^\ell})\bigl[\bigl( o^\mathrm{flat}(E)\cdot o^\mathrm{flat}(\underline\C^k) \bigr)\cdot o^\mathrm{flat}(\underline\C^{\ell})\bigr] &&\text{(by \eqref{equation.2.5})}\\
&=
\Or(\Phi\oplus\id_{\underline\C^\ell})\bigl[ o^\mathrm{flat}(E)\cdot \bigl(o^\mathrm{flat}(\underline\C^{k})\cdot o^\mathrm{flat}(\underline\C^{\ell})\bigr) \bigr]&&\text{(assoc.)}\\
&=
\Or(\Phi\oplus\id_{\underline\C^\ell})\bigl( o^\mathrm{flat}(E)\cdot o^\mathrm{flat}(\underline\C^{k+\ell}) \bigr)&&\text{(unital)}.
\end{align*}
This proves independence of $k.$ The independence of $\Phi$
follows from Proposition~\ref{prop.2.19}. Once $o^\mathrm{flat}$ is
well-defined, (iii)--(iv) are clear.\medskip

\noindent (ii) We know already that
$\Or(\Phi)$ and $\lambda$ are continuous maps of double covers over $P$
and that $o^\mathrm{flat}(\underline\C^k)$ and $o^\mathrm{flat}(\underline\C^{m+k})$ are continuous sections.
\medskip

\noindent (v) Let $\Phi_i\colon E_1 \oplus \underline\C^{k_i} \to \underline\C^{m_i + k_i}$ be $\SU(m_i + k_i)$-isomorphisms. Then
\begin{align*}
&o^\mathrm{flat}(\underline\C^{m_1 + m_2 + k_1 + k_2})\\
&= o^\mathrm{flat}(\underline\C^{m_1+ k_1})\cdot o^\mathrm{flat}(\underline\C^{m_2+ k_2})  && (\text{unital})\\
&=\Or(\Psi_1)\bigl(o^\mathrm{flat}(E_1)\cdot o^\mathrm{flat}(\underline\C^{k_1}) \bigr) \cdot
\Or(\Psi_2)\bigl(o^\mathrm{flat}(E_2)\cdot o^\mathrm{flat}(\underline\C^{k_2}) \bigr) &&\text{(by \eqref{equation.4.1})}\\
&= \Or(\Psi_1 \oplus \Psi_2)\bigl( o^\mathrm{flat}(E_1)\cdot o^\mathrm{flat}(\underline\C^{k_1}) \cdot
o^\mathrm{flat}(E_2)\cdot o^\mathrm{flat}(\underline\C^{k_2}) \bigr)&&\text{(by \eqref{equation.2.5})}\\
&= \Or(\Psi_1 \oplus \Psi_2)\Or(\id_{E_1}\oplus \operatorname{flip} \oplus \id_{\underline\C^{k_2}})\\ &\qquad\qquad\qquad\qquad\bigl( o^\mathrm{flat}(E_1)\cdot o^\mathrm{flat}(E_2) \cdot o^\mathrm{flat}(\underline\C^{k_1}) \cdot
o^\mathrm{flat}(\underline\C^{k_2}) \bigr)&&\text{(comm.).}
\end{align*}
On the other hand, by using the trivialization $(\Phi_1\oplus \Phi_2)\circ(\id_{E_1}\oplus \operatorname{flip} \oplus \id_{\underline\C^{k_2}})$ of $E_1\oplus E_2 \oplus \underline\C^{k_1 + k_2}$ in the definition \eqref{equation.4.1} we have
\begin{align*}
\Or(\Phi_1\oplus \Phi_2)&\circ \Or(\id_{E_1}\oplus \operatorname{flip} \oplus \id_{\underline\C^{k_2}})(o^\mathrm{flat}(E_1\oplus E_2) \cdot o^\mathrm{flat}(\underline\C^{k_1 + k_2}))\\
&= o^\mathrm{flat}(\underline\C^{m_1 + m_2 + k_1 + k_2}).
\end{align*}
Combining the last two equations and using unitality implies the result.
\end{proof}

\begin{ex}
\label{ex.4.3}
Since $\pi_6(\SU(m)) = 0$ for $m\geq 4,$ every $\SU(m)$-bundle $E$ over $\cS^7$ is stably trivial. 
\end{ex}

\subsubsection{Trivializing $\SU(m)$-bundles outside codimension 4}

We now explain how to trivialize a $\SU(m)$-bundle $E\ra X$ outside a submanifold $Y\subset X$ of codimension 4. One might expect this on general grounds, as $\pi_i(B\SU(m))=0$ for $i<4,$ but the construction may be of independent interest.

\begin{con}
\label{con.4.4}
Let $X$ be a compact, oriented manifold of dimension $n,$ with $n\le 11,$ and $E\ra X$ be a rank $m$ complex vector bundle with $\SU(m)$-structure, for $m\ge 1.$ Write $\underline\C^{m-1}=X\t\C^{m-1}$ for the trivial vector bundle over $X$ with fibre $\C^{m-1},$ and $\Hom(\underline\C^{m-1},E)\ra X$ for the bundle of complex vector bundle morphisms over $X,$ and $\Hom^{(k)}(\underline\C^{m-1},E)$ for the determinantal variety of 
homomorphisms ${s\colon \C^{m-1} \to E_x}$ of rank $m-1-k$ for $k=0,\ldots,m-1,$ which is a submanifold of $\Hom(\underline\C^{m-1},E)$ of real codimension $2k(k+1).$

A morphism $s:\ul\C^{m-1}\ra E$ is a section $s\colon X \to \Hom(\underline\C^{m-1},E).$ We call $s$ \emph{generic} if it is transverse to each $\Hom^{(k)}(\underline\C^{m-1},E).$ This is an open dense condition on such $s.$ If $s$ is generic then $s^{-1}\bigl(\Hom^{(k)}(\underline\C^{m-1},E)\bigr)$ is a submanifold of $X$ of dimension $n-2k(k+1),$ and so is empty if $k\ge 2$ as $n\le 11.$ 

Write $Y=s^{-1}\bigl(\Hom^{(1)}(\underline\C^{m-1},E)\bigr),$ the {\it degeneracy locus\/} of $s.$ Then $Y$ is an embedded submanifold of $X$ of dimension $n-4.$ It is closed in $X,$ as the closure lies in the union of $s^{-1}\bigl(\Hom^{(k)}(\underline\C^{m-1},E)\bigr)$ for $k\ge 1,$ but these are empty for $k>1.$ It is oriented as $X$ is, and the fibres of $\Hom(\underline\C^{m-1},E)\ra X$ and $\Hom^{(1)}(\underline\C^{m-1},E)\ra X$ are complex manifolds and so oriented.

As $X\sm Y=s^{-1}\bigl(\Hom^{(0)}(\underline\C^{m-1},E)\bigr),$ we see that $s\vert_{X\sm Y}:\underline\C^{m-1}\vert_{X\sm Y}\ra E\vert_{X\sm Y}$ is injective. Let $s^*\vert_{X\sm Y}:E\vert_{X\sm Y}\ra\underline \C^{m-1}\vert_{X\sm Y}$ be its Hermitian adjoint, with respect to the Hermitian metrics on $E$ and $\ul\C^{m-1}.$ Then $s^*\ci s\vert_{X\sm Y}:\underline \C^{m-1}\vert_{X\sm Y}\ra \underline \C^{m-1}\vert_{X\sm Y}$ is invertible with positive eigenvalues, and thus has an inverse square root $(s^*\ci s)\vert_{X\sm Y}^{-1/2}:\underline \C^{m-1}\vert_{X\sm Y}\ra \underline \C^{m-1}\vert_{X\sm Y}.$ Consider
\begin{equation*}
s\vert_{X\sm Y}\ci (s^*\ci s)\vert_{X\sm Y}^{-1/2}:\underline \C^{m-1}\vert_{X\sm Y}\longra E\vert_{X\sm Y}.
\end{equation*}
This is an injective linear map of complex vector bundles which is isometric for the Hermitian metrics on $\underline \C^{m-1}\vert_{X\sm Y}$ and $E\vert_{X\sm Y}.$ 

As $E$ has a $\SU(m)$-structure, there is a unique isomorphism of $\SU(m)$-bundles $\rho:\underline \C^m\vert_{X\sm Y}\ra E\vert_{X\sm Y},$ such that $\rho\vert_{\underline \C^{m-1}\vert_{X\sm Y}}=s\vert_{X\sm Y}\ci (s^*\ci s)\vert_{X\sm Y}^{-1/2},$ regarding $\underline \C^{m-1}$ as a vector subbundle of $\underline \C^m=\underline \C^{m-1}\op\underline\C.$

Thus, for any $\SU(m)$-bundle $E\ra X$ we can find a codimension 4 submanifold $Y\subset X$ and a $\SU(m)$-framing $\rho:\ul{\C}^m\vert_{X\sm Y}\,{\buildrel\cong\over\longra}\,\ab E\vert_{X\sm Y}$ of $E$ outside $Y.$

From the definition of Chern classes in terms of degeneracy cycles in Griffiths and Harris \cite[p.~412-3]{GrHa}, we see that the homology class $[Y]\in H_{n-4}(X;\Z)$ is Poincar\'e dual to the second Chern class $c_2(E)\in H^4(X;\Z).$ More generally, if $U$ is any open neighbourhood of $Y$ in $X$ then $[Y]\in H_{n-4}(U;\Z)$ is Poincar\'e dual to the compactly-supported second Chern class~$c_2(E\vert_U,\rho\vert_{U\sm Y})\in H^4_{\mathrm{cpt}}(U;\Z).$
\end{con}

\subsubsection{Two embedding theorems}

We will need the following variation on Whitney's Embedding Theorem:

\begin{thm}[Haefliger~{\cite[p.~47]{Haef}}]
\label{thm.4.5}
Let\/ $Y$ be a compact\/ $3$-manifold. Then:
\begin{itemize}
\setlength{\itemsep}{0pt}
\setlength{\parsep}{0pt}
\item[{\bf(i)}] There is an embedding\/ $Y \hookrightarrow \cS^7.$
\item[{\bf(ii)}] Any two embeddings\/ $Y\hookrightarrow \cS^7$ are isotopic through embeddings.
\end{itemize}
\end{thm}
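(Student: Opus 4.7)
The plan is to appeal to classical smooth embedding theory in the so-called metastable range $2m \geq 3(n+1)$, where $n=\dim Y=3$ and $m=\dim \cS^7=7$. Since $14 \geq 12$, we are comfortably inside this range, and both conclusions follow from the general philosophy that in this range embeddings behave homotopy-theoretically.

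For part \textbf{(i)}, I would simply invoke Whitney's embedding theorem to produce a smooth embedding $Y \hookrightarrow \R^7$ and then compose with the standard inclusion $\R^7 \hookrightarrow \cS^7 = \R^7 \cup \{\infty\}$. Alternatively one could argue directly on $\cS^7$: choose any smooth map $f\colon Y \to \cS^7$, perturb it to be an immersion by transversality (dimension count $2\cdot 3 < 7$ makes this generic), and then remove double points using a Whitney trick, which works because $2\cdot 7 = 14 > 2\cdot 3 + 2\cdot 3 = 12$ leaves enough room to slide one sheet past another.

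For part \textbf{(ii)}, the strategy is in two steps. First, any two embeddings $f_0,f_1\colon Y \to \cS^7$ are homotopic as continuous maps: since $\pi_k(\cS^7)=0$ for $k\leq 6$ and $\dim Y=3$, the obstruction classes in $H^{k+1}(Y;\pi_k(\cS^7))$ vanish, so one can build a homotopy skeleton by skeleton. Second, convert this homotopy to a smooth isotopy between embeddings. This is precisely the content of Haefliger's theorem in the metastable range: a homotopy through maps can be approximated by a concordance through embeddings, and a concordance in this dimension range is in turn isotopic to an isotopy.

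The main obstacle is the isotopy step in part (ii), which is not at all routine. The issue is that a generic homotopy $F\colon Y \times [0,1] \to \cS^7 \times [0,1]$ between two embeddings will have isolated double points (since $2\cdot 4 = 8 > 7+1$ fails by one dimension), and one must remove these via the Whitney trick adapted to one-parameter families, relying on the connectivity estimate $2\cdot 7 \geq 3\cdot 3 + 4$. Rather than reproduce this delicate general-position argument, the cleanest route is to cite Haefliger's paper~\cite{Haef} directly, as done in the theorem statement; for our purposes the existence and isotopy uniqueness are black-box inputs to the orientation construction that follows.
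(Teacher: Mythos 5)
The paper does not offer a proof of this result; it is stated as a black-box citation to Haefliger, exactly as you yourself conclude is the cleanest route. So your outline of how Haefliger's argument works is extra credit rather than a competing proof, and it is essentially correct. Two small points are worth flagging. First, in your alternative argument for (i), the dimension count is mislabelled: the condition $m\ge 2n$ (here $7\ge 6$) is what makes a generic map an immersion, while $m\ge 2n+1$ (here $7\ge 7$) is the general-position condition for injectivity. Since $m=2n+1$ holds, a generic smooth map $Y^3\to\cS^7$ is already an embedding and no Whitney trick is needed; invoking it is harmless but unnecessary. (And of course the straightforward route, Whitney embedding into $\R^6\subset\R^7\subset\cS^7$, suffices.) Second, your reduction of (ii) to homotopy via the $6$-connectivity of $\cS^7$ and the metastable-range isotopy theorem of Haefliger ($2m\ge 3n+4$, i.e.\ $14\ge 13$) is the right argument, and it is indeed the nontrivial input of the theorem. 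Your statement ``$2\cdot 4 = 8 > 7+1$ fails by one dimension'' is awkwardly phrased (you mean that the strict inequality needed for generic concordances to avoid double points just barely fails), but the intended content is correct.
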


Wall has shown the following:

\begin{thm}[Wall~{\cite[p.~567]{Wall}}]
\label{thm.4.6} 
Let\/ $Z$ be a compact connected\/ $4$-manifold with non-empty boundary. Then there exists an embedding\/ $Z\hookrightarrow\cS^7.$
\end{thm}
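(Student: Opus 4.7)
The plan is to build the embedding inductively using a handle decomposition of $Z$. Since $\pd Z \ne \es$, one may arrange $Z$ to have only handles of index $0,1,2,3$ (for instance, by choosing a Morse function with $\pd Z$ as the maximum level set, so that no $4$-handles occur). The base case is trivial: a single $0$-handle $D^4$ embeds as a small round disk in $\cS^7$.

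For the inductive step, suppose the union $Z_{\mathrm{curr}}$ of the handles attached so far is smoothly embedded in $\cS^7$, and let $h = D^k\times D^{4-k}$ with $k \in \{1,2,3\}$ be the next handle, attached along $\phi\colon \pd D^k \times D^{4-k} \to \pd Z_{\mathrm{curr}}$. First I would extend the core: embed $D^k$ in $\cS^7$ extending $\phi(\cdot,0)\vert_{\pd D^k}$, which is possible because $\pi_{k-1}(\cS^7)=0$, and can be arranged to meet $Z_{\mathrm{curr}}$ only along its boundary by a general position argument in codimension $7-k \ge 4$. Second, extend the framing of the rank-$(4-k)$ normal subbundle coming from the product structure on $h$, from $\pd D^k$ to $D^k$ inside the (trivial) normal bundle of $D^k$ in $\cS^7$. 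The obstruction lies in the Stiefel manifold homotopy group $\pi_{k-1}\bigl(V_{4-k}(\R^{7-k})\bigr)$.

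For $k=1,2,3$ these three groups are $\pi_0 V_3(\R^6)$, $\pi_1 V_2(\R^5)$, and $\pi_2 V_1(\R^4) = \pi_2(\cS^3)$ respectively, and all three vanish: the first because $V_3(\R^6)$ is connected, the second from the long exact sequence of the fibration $\SO(3)\to \SO(5)\to V_2(\R^5)$ using that $\pi_1(\SO(3))\to \pi_1(\SO(5))$ is an isomorphism, and the third is classical. Hence the framing extends and the handle embeds smoothly into $\cS^7$ extending the previous embedding. Iterating through all handles yields the desired $Z \hookrightarrow \cS^7$.

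The main obstacle I anticipate is maintaining global disjointness of the embedded handles throughout the induction; this is handled by general position in the generous codimension $7-k \ge 4$, but requires careful bookkeeping. A secondary point is the justification of the handle decomposition with no $4$-handles for compact connected $4$-manifolds with non-empty boundary, which is standard Morse theory. An alternative and slicker route would be to invoke Hirsch's theorem that every compact $n$-manifold with non-empty boundary embeds in $\R^{2n-1}$, applied with $n=4$ and composed with the standard inclusion $\R^7 \hookrightarrow \cS^7$; but the handle-theoretic argument above has the advantage of being essentially self-contained.
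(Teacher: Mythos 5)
The paper states Theorem~\ref{thm.4.6} as a bare citation to Wall and gives no proof, so there is nothing in the paper itself to compare against; I will assess your argument on its own terms.

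Your handle-by-handle scheme and the framing analysis are fine: indeed $V_{4-k}(\R^{7-k})$ is $2$-connected for $k\le 3$, so $\pi_{k-1}(V_{4-k}(\R^{7-k}))=0$ in all three cases, exactly as you compute. But the step ``arrange the core $D^k$ to meet $Z_{\mathrm{curr}}$ only along its boundary by a general position argument in codimension $7-k\ge 4$'' has a genuine gap at $k=3$. The relevant condition for general position to make two submanifolds of $\cS^7$ disjoint is that their dimensions sum to less than $7$, i.e.\ $k+\dim Z_{\mathrm{curr}}=k+4<7$; this holds for $k=1,2$ but fails for $k=3$, where a generic $3$-disk meets the $4$-manifold $Z_{\mathrm{curr}}$ transversally in finitely many interior points rather than not at all. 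Removing these is not formal and is in fact the second, less visible place the hypothesis $\pd Z\ne\es$ is used (the first being to kill the $4$-handles). Concretely: Alexander duality gives $\tilde H_2(\cS^7\sm Z_{\mathrm{curr}})\cong\tilde H^4(Z_{\mathrm{curr}})$, and Lefschetz duality on the compact connected $4$-manifold-with-boundary $Z_{\mathrm{curr}}$ gives $H^4(Z_{\mathrm{curr}})\cong H_0(Z_{\mathrm{curr}},\pd Z_{\mathrm{curr}})=0$ because $\pd Z_{\mathrm{curr}}\ne\es$; combined with $\pi_1(\cS^7\sm Z_{\mathrm{curr}})=0$ (codimension $3$) and Hurewicz this yields $\pi_2(\cS^7\sm Z_{\mathrm{curr}})=0$, so the attaching $S^2$ bounds a map $D^3\to\cS^7\sm Z_{\mathrm{curr}}$, which a genuine general position argument (now $2\cdot 3<7$) turns into an embedded disk with interior in the open complement. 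Your write-up should include some version of this $\pi_2$ computation for the $3$-handle case; as it stands, the $k=3$ step is not established. (As for the alternative route: I would want a precise reference for the assertion that Hirsch proved every compact $n$-manifold with non-empty boundary embeds in $\R^{2n-1}$; I do not recognise that exact formulation and would not accept it without a citation.)
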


\subsubsection{Definition of the orientations $o^F_{Y,\rho,\io,\Psi}(E)$}

\begin{dfn}
\label{dfn.4.7}
Suppose $X$ is a compact, oriented, spin 7-manifold with flag structure $F,$ and $E \to X$ is a rank $m$ complex vector bundle with $\SU(m)$-structure. After making some arbitrary choices, we will define an orientation $o^F_{Y,\rho,\io,\Psi}(E)$ in~$\Or_E.$

As in Construction \ref{con.4.4}, choose a generic morphism $s\colon\underline\C^{m-1}\ra E,$ and from this construct a compact, oriented 3-submanifold $Y\subset X$ and a $\SU(m)$-framing $\rho:\ul{\C}^m\vert_{X\sm Y}\,{\buildrel\cong\over\longra}\,\ab E\vert_{X\sm Y}$ of $E$ outside $Y.$ By Theorem \ref{thm.4.5}(i) we may choose an embedding $\io\colon Y \hookrightarrow \cS^7.$ Set $Y'=\io(Y),$ a 3-submanifold of $\cS^7.$ Write $N_Y,N_{Y'}$ for the normal bundles of $Y,Y'$ in~$X,\cS^7.$

We claim that we may choose an isomorphism $\Psi:N_Y\ra \io^*(N_{Y'})$ of vector bundles on $Y,$ which identifies the orientations and spin structures on the total spaces of $N_Y,N_{Y'}$ induced by the orientations and spin structures on $X,\cS^7.$ Here when we say that $\Psi$ identifies the spin structures, we mean that it has a lift $\hat\Psi$ to the spin bundles of $N_Y,N_{Y'}.$

To see this, choose a spin structure on the oriented 3-manifold $Y$
and transport it along $\io$ to $Y'.$ Using the spin structures on $X,\cS^7$ we get,
by $2$-out-of-$3$ \cite[Prop.~1.15]{LaMi}, spin structures
$P_{\Spin}(N_Y) \to Y$ and $P_{\Spin}(N_{Y'}) \to Y'$ on $N_Y$ and $N_{Y'}.$
As $\dim Y = \dim Y' = 3$ and $\Spin(4)$ is $2$-connected, these are
trivial principal bundles, and therefore we may choose an oriented, spin isomorphism $\Psi$ between the normal bundles~$N_Y,N_{Y'}.$

Choose tubular neighbourhoods $U\subset X$ and $U'\subset \cS^7$ of $Y,Y'$ in $X,\cS^7,$ identified with open $\ep$-balls in $N_Y,N_{Y'}$ for small $\ep>0.$ Then $\Psi$ induces a diffeomorphism $\psi:U\ra U'$ identifying orientations and spin structures on $U,U',$ with $\psi\vert_Y=\io$ and $\d\psi\vert_{N_Y}=\Psi.$ As in Construction \ref{con.4.4}, $[Y]\in H_3(U;\Z)$ is Poincar\'e dual to~$c_2(E\vert_U,\rho\vert_{U\sm Y})\in H^4_{\mathrm{cpt}}(U;\Z).$

Define a rank $m$ complex vector bundle $E'\ra\cS^7$ with $\SU(m)$-structure by
$E'\vert_{\cS^7\sm Y'}\cong\ul{\C}^m$ and $E'\vert_{U'}\cong\psi_*(E\vert_U),$ identified over $U'\sm Y'$ by $(\psi\vert_{U\sm Y})_*(\rho).$ Write $\Xi\colon E\vert_U\ra \psi^*(E'\vert_{U'})$ for the natural isomorphism and $\rho'=\Xi\circ\rho\circ \psi^{-1}$ for the natural $\SU(m)$-framing $\ul{\C}^m\vert_{\cS^7\sm Y'}\,{\buildrel\cong\over\longra}\,\ab E'\vert_{\cS^7\sm Y'}.$ Then Theorem \ref{thm.2.15} gives a canonical excision isomorphism $\Or(\psi,\Xi,\rho,\rho')\colon \Or_E\ra \Or_{E'}$ in~\eqref{equation.2.4}. 

By Example \ref{ex.4.3}, $E'\ra\cS^7$ is stably trivial, so Definition \ref{dfn.4.1} gives an orientation $o^\mathrm{flat}(E')\in \Or_{E'}.$ As in \eqref{equation.1.9}, define $o^F_{Y,\rho,\io,\Psi}(E)\in\Or_E$ by
\begin{equation}
\label{equation.4.2}
o^F_{Y,\rho,\io,\Psi}(E)=\bigl(F\vert_U/\psi^*(F_{\cS^7}\vert_{U'})\bigr)[Y]
\cdot\Or(\psi,\Xi,\rho,\rho')^{-1}(o^\mathrm{flat}(E')),
\end{equation}
where $F_{\cS^7}$ is the unique flag structure on $\cS^7,$ and $F\vert_U/\psi^*(F_{\cS^7}\vert_{U'})$ is as in Proposition \ref{prop.3.3}(b) for the flag structures $F\vert_U$ and $\psi^*(F_{\cS^7}\vert_{U'})$ on~$U.$
\end{dfn}

\subsubsection{Uniqueness of orientations, if they exist}

Uniqueness of orientations, subject to our axioms, is explained in the outline of the proof in \S\ref{section.1}(A).

\subsection{$o^F_{Y,\rho,\io,\Psi}(E)$ is independent of choices}

We will prove the orientation $o^F_{Y,\rho,\io,\Psi}(E)$ in Definition \ref{dfn.4.7} depends only on $X,F$ and $E\ra X,$ and not on the other arbitrary choices.

\subsubsection{$o^F_{Y,\rho,\io,\Psi}(E)$ is independent of $U,U',\psi$ for fixed $Y,\rho,\io,\Psi$}
\label{subsubsection.4(B)(i)}

In the situation of Definition \ref{dfn.4.7}, let $X,F,E,Y,\rho,\io,\Psi$ be fixed, and let $U_0,U'_0,\psi_0$ and $U_1,U'_1,\psi_1$ be alternative choices for $U,U',\psi.$ Then by properties of tubular neighbourhoods we can find families $U_t,U'_t$ and $\psi_t:U_t\ra U_t'$ depending smoothly on $t\in[0,1]$ and interpolating between $U_0,U'_0,\psi_0$ and $U_1,U'_1,\psi_1.$ For each $t\in[0,1]$ we get an orientation $o^F_{Y,\rho,\io,\Psi}(E)_t$ in Definition \ref{dfn.4.7} defined using $U_t,U'_t,\psi_t.$ The families property Theorem \ref{thm.2.15}(ii) of excision isomorphisms implies that $o^F_{Y,\rho,\io,\Psi}(E)_t$ depends continuously on $t,$ and so is constant. Hence $o^F_{Y,\rho,\io,\Psi}(E)$ is independent of the choice of~$U,U',\psi.$

\subsubsection{$o^F_{Y,\rho,\io,\Psi}(E)$ is independent of $\Psi$ for fixed $Y,\rho,\io$}

We will need the following:

\begin{prop}
\label{prop.4.9}
Let\/ $Y$ be a compact\/ $n$-manifold,\/ $N \to Y$ be a rank\/ $2k$ real vector bundle with an orientation and spin structure on its fibres, and\/ $\Phi:N\ra N$ be an orientation and spin-preserving automorphism of\/ $N$ covering\/ $\id_Y:Y\ra Y.$ Suppose\/ $E \to N$ is a rank\/ $m$ complex vector bundle with\/ $\U(m)$-structure for\/ $2m\ge n+2k$ with a framing\/ $\rho$ outside a compact subset of\/ $N.$ Then there exists a\/ $\U(m)$-isomorphism\/ $\Theta\colon E \to \Phi^*(E)$ over\/ $\id_N$ with\/ $\Theta\circ\rho= \Phi^*(\rho)$ outside a compact subset of\/~$N.$

When\/ $n=3$ and\/ $2k=4,$ the same holds with\/ $\SU(m)$ in place of\/~$\U(m).$
\end{prop}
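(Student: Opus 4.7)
We first treat the $\U(m)$ case; the $\SU(m)$ refinement is obtained at the end by a determinant correction. The strategy is to interpret the problem K-theoretically. Fix a compact $K \subset N$ outside which $\rho$ is defined, and set $K_0 = K \cup \Phi^{-1}(K)$, so that $\rho$ and $\Phi^*\rho$ are both $\U(m)$-framings outside $K_0$. Then $(E,\rho)$ and $(\Phi^*E, \Phi^*\rho)$ are complex rank $m$ vector bundles on $N$ trivialized outside a compact set, and so represent classes in the compactly supported K-theory $K^0_{\mathrm{cpt}}(N) \cong \tilde K^0(N^+)$. I will show these two classes coincide, and then invoke stability to upgrade the K-theoretic equality to an actual isomorphism.

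The crucial step is to show that $\Phi^*\colon K^0_{\mathrm{cpt}}(N) \to K^0_{\mathrm{cpt}}(N)$ is the identity. Since $\pi\colon N \to Y$ is a spin vector bundle of rank $2k$, the K-theoretic Thom isomorphism reads
\[
K^0(Y)\, \stackrel{\sim}{\longrightarrow}\, K^0_{\mathrm{cpt}}(N), \qquad x\,\longmapsto\, \pi^*x \cdot \lambda_N,
\]
where $\lambda_N$ is the K-theoretic Thom class determined by the spin structure. Because $\Phi$ covers $\id_Y$, we have $\pi \circ \Phi = \pi$; and because $\Phi$ preserves orientation and spin structure, $\Phi^*\lambda_N = \lambda_N$. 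Hence $\Phi^*(\pi^*x \cdot \lambda_N) = \pi^*x \cdot \lambda_N$, so $\Phi^* = \id$ on $K^0_{\mathrm{cpt}}(N)$, and therefore $[E,\rho] = \Phi^*[E,\rho] = [\Phi^*E, \Phi^*\rho]$. This is the main obstacle, and is precisely where the hypothesis that $\Phi$ preserves the spin structure is essential.

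Next, invoke the stable range $2m \ge n + 2k = \dim N$. Standard obstruction theory, using $\pi_{i-1}(\U(m)) = \pi_{i-1}(\U)$ for $i \le 2m$, shows the natural map
\[
[N^+, B\U(m)]_*\, \longra\, [N^+, B\U]_* \,=\, K^0_{\mathrm{cpt}}(N)
\]
is a bijection; the left hand side classifies rank $m$ complex bundles on $N$ framed outside a compact up to isomorphism of pairs. Consequently, the K-theoretic equality delivers a complex-linear isomorphism $\Theta\colon E \to \Phi^*E$ with $\Theta \circ \rho = \Phi^*\rho$ outside a compact. A pointwise polar decomposition $\Theta = U \cdot (\Theta^*\Theta)^{1/2}$ then replaces $\Theta$ by a $\U(m)$-isomorphism $U$. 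Since $\Theta$ is already unitary outside the compact set, we have $(\Theta^*\Theta)^{1/2} = \id$ there and $U = \Theta$ there, so the boundary condition is preserved.

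Finally, the $\SU(m)$ case with $n=3$, $2k=4$. Take $\Theta$ as above. Then $\det\Theta\colon N \to \U(1)$ equals $1$ outside a compact, so defines a class in $H^1_{\mathrm{cpt}}(N;\Z)$ via $\U(1) \simeq K(\Z,1)$. The cohomological Thom isomorphism gives
\[
H^1_{\mathrm{cpt}}(N;\Z)\, \cong\, H^{1-4}(Y;\Z) \,=\, 0,
\]
so $\det\Theta$ admits a compactly supported lift $\alpha\colon N \to \R$ along $t \mapsto e^{2\pi i t}$. Set $\tilde\Theta = e^{-2\pi i \alpha/m}\cdot\Theta$: then $\det\tilde\Theta = 1$ and $\tilde\Theta = \Theta$ outside a compact, so $\tilde\Theta$ is the desired $\SU(m)$-isomorphism.
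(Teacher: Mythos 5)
Your argument for the $\U(m)$ case is essentially the paper's: both use the Atiyah--Bott--Shapiro Thom isomorphism $K^0(Y)\cong K^0_{\mathrm{cpt}}(N)$ together with naturality (equivalently, $\Phi^*\lambda_N=\lambda_N$) to conclude $\Phi^*=\id$ on compactly supported $K$-theory, then pass from the $K$-theoretic equality $[E,\rho]=[\Phi^*E,\Phi^*\rho]$ to an actual $\U(m)$-isomorphism using the stable range $\dim N=n+2k\le 2m$; your polar-decomposition remark just makes explicit a point the paper leaves implicit. Where you genuinely diverge is the $\SU(m)$ refinement. The paper trivializes the $\Spin(4)$-bundle $N$ over the $3$-manifold $Y$, identifies $N^+$ with $Y^+\wedge\cS^4$, and invokes $\Omega^4 B\SU\simeq\Omega^4 B\U$ to conclude that framed $\SU$- and $\U$-bundles on such a suspension coincide; this is a classification-level argument that uses $n=3$, $2k=4$ twice. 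You instead start from the $\U(m)$-isomorphism $\Theta$, observe that $\det\Theta\colon N\to\U(1)$ equals $1$ outside a compact set and so lives in $H^1_{\mathrm{cpt}}(N;\Z)\cong H^{1-2k}(Y;\Z)=0$, lift it to a compactly supported $\alpha\colon N\to\R$, and twist $\Theta$ by $e^{-2\pi i\alpha/m}$ to kill the determinant. This is a clean pointwise correction that avoids the trivialization of $N$ entirely and in fact only uses $2k\ge 2$ (so $H^{1-2k}(Y;\Z)=0$), making it somewhat more general and more elementary than the paper's route; on the other hand the paper's approach makes the structural reason — that unitary and special unitary framed bundles agree stably on $4$-fold suspensions — more visible.

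One small point to tighten: for the lift $\alpha$ you should note that, since $\det\Theta\equiv 1$ outside a compact set, a null-homotopy of $\det\Theta\colon N^+\to\U(1)$ rel $\infty$ (guaranteed by $H^1_{\mathrm{cpt}}(N;\Z)=0$) lifts through the covering $\R\to\U(1)$ starting from the constant map $0$, and the lift is then integer-valued and hence locally constant on $N\setminus K'$, equal to $0$ on the component containing $\infty$; after enlarging $K'$ to a disc bundle in $N$ (so that $N\setminus K'$ has no bounded components, using $2k\ge 2$) this gives $\alpha=0$ outside a compact set, as needed for $\tilde\Theta\circ\rho=\Phi^*\rho$ there.
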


\begin{proof} By Atiyah, Bott and Shapiro \cite[Th.~12.3(ii)]{ABS}, the orientation and spin structure on $N$ determine a Thom isomorphism $\operatorname{Thom}:K^0(Y)\ra K^0_\mathrm{cpt}(N),$ a form of Bott periodicity. By naturality we have a commutative diagram
\begin{equation*}
\xymatrix@C=90pt@R=15pt{ *+[r]{K^0(Y)} \ar[d]^{\id^*_Y} \ar[r]_{\operatorname{Thom}} & *+[l]{K^0_\mathrm{cpt}(N)} \ar[d]_{\Phi^*} \\
*+[r]{K^0(Y)} \ar[r]^{\operatorname{Thom}} & *+[l]{K^0_\mathrm{cpt}(N).\!}
}\end{equation*}
Since the horizontal maps are isomorphisms we see that $\Phi^*=\id.$ Thus we have $[E,\rho]=[\Phi^*(E),\Phi^*(\rho)]$ in $K^0_\mathrm{cpt}(N).$ As we are in the stable range $2m\ge k,$ the K-theory class determines the bundle up to isomorphism, so $\Th$ exists as claimed.

For the second part, every $\Spin(4)$-bundle over a compact 3-manifold $Y$ is trivializable, and $\Omega^4 B\SU \simeq \Omega^4 B\U$ means that stably there is no difference between unitary and special unitary bundles on $N\cup \{\infty\}\cong Y^+\wedge \cS^4.$
\end{proof}

\begin{prop}
$o^F_{Y,\rho,\io,\Psi}(E)$ is independent of\/~$\Psi.$
\end{prop}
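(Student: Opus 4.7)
Given $\Psi_0,\Psi_1$ with induced diffeomorphisms $\psi_0,\psi_1\colon U\to U'$, set $\sigma\coloneqq \psi_0^{-1}\circ\psi_1\colon U\to U$. Since $\psi_i|_Y=\io$, one has $\sigma|_Y=\id_Y$, and $\sigma$ is orientation and spin preserving. After compactly supported smoothing, extend $\sigma$ to a spin diffeomorphism $\hat\sigma\colon X\to X$ equal to the identity outside $U$. Proposition~\ref{prop.4.9} applied to $N_Y\cong U$ produces an $\SU(m)$-isomorphism $\Theta\colon E|_U\to \sigma^*(E|_U)$ matching $\rho$ outside a compact set, which extends to $\hat\Theta\colon E\to \hat\sigma^*(E)$ by the identity outside $U$. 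On the $\cS^7$-side, $\Theta$ determines a canonical $\SU(m)$-isomorphism $M\colon E'_0\to E'_1$ over $\id_{\cS^7}$, equal to $(\psi_0)_*\bigl((\sigma_*\Theta)^{-1}\bigr)$ on $U'$ and to the identity on $\ul\C^m$ outside $Y'$; a direct fibrewise check shows $M\circ\Xi_0\circ\Theta=\Xi_1$ on $U$, and Proposition~\ref{prop.4.2}(iii) gives $\Or(M)(o^{\mathrm{flat}}(E'_0))=o^{\mathrm{flat}}(E'_1)$.

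Functoriality (Theorem~\ref{thm.2.15}(i)) and restriction (Theorem~\ref{thm.2.15}(v)) for excision then yield
\begin{equation*}
\Or(\psi_1,\Xi_1,\rho,\rho'_1)=\Or(M)\circ\Or(\psi_0,\Xi_0,\rho,\rho'_0)\circ\Or(\hat\sigma,\hat\Theta,\rho,\rho),
\end{equation*}
and substituting into \eqref{equation.4.2} reduces the desired equality $o^F_{Y,\rho,\io,\Psi_0}(E)=o^F_{Y,\rho,\io,\Psi_1}(E)$ to the single identity
\begin{equation*}
(a_1/a_0)\cdot (-1)^{\delta(\hat\sigma,\hat\Theta)}=+1,\qquad a_i\coloneqq (F|_U/\psi_i^*F_{\cS^7}|_{U'})[Y].
\end{equation*}
Here $(-1)^{\delta(\hat\sigma,\hat\Theta)}$ is the sign by which $\Or(\hat\sigma,\hat\Theta,\rho,\rho)$ acts on $\Or_E$, computed by Proposition~\ref{prop.2.18}; the frames are immaterial since $\hat\sigma$ and $\hat\Theta$ equal the identity on their support.

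Both signs now live in the mapping torus $X_{\hat\sigma}$. For the flag sign, $\psi_1^*F_{\cS^7}=\sigma^*\psi_0^*F_{\cS^7}$ on $U$, so extending $\psi_0^*F_{\cS^7}$ to any flag structure on $X$ and applying Proposition~\ref{prop.3.6} to $\hat\sigma$ gives $a_1/a_0=(-1)^{[Y\times\cS^1]\bullet[Y\times\cS^1]}$ in $X_{\hat\sigma}$. For the index sign, Proposition~\ref{prop.2.19} gives $\delta(\hat\sigma,\hat\Theta)\equiv \int_{X_{\hat\sigma}}c_2(E_{\hat\Theta})^2\pmod 2$, and since $c_2(E)$ is Poincar\'e dual to $[Y]$ by Definition~\ref{dfn.4.4} while $\hat\sigma|_Y=\id_Y$, Example~\ref{ex.2.20} identifies this as $[Y\times\cS^1]\bullet[Y\times\cS^1]$ in the same $X_{\hat\sigma}$. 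The two signs coincide and their product is $+1$, giving $o^F_{Y,\rho,\io,\Psi_0}(E)=o^F_{Y,\rho,\io,\Psi_1}(E)$. The crux, and the reason flag structures enter the argument, is exactly this reduction to a common mapping torus on $X$: a naive comparison through $\cS^7_{\hat\tau}$ would trivially give sign $+1$ as $H^4(\cS^7_{\hat\tau})=0$ and so could not match a possibly nontrivial $a_1/a_0$, whereas factoring through the $\sigma$-induced automorphism of $\Or_E$ via Proposition~\ref{prop.4.9} places both signs on $X_{\hat\sigma}$, where Example~\ref{ex.2.20} and Proposition~\ref{prop.3.6} produce the same mod-$2$ self-intersection by the very design of flag structures in~\cite{Joyc3}.
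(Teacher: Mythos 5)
The critical gap is your extension step: that $\sigma=\psi_0^{-1}\circ\psi_1\colon U\to U$ (with $\sigma|_Y=\id_Y$) extends, after ``compactly supported smoothing'', to a spin diffeomorphism $\hat\sigma\colon X\to X$ equal to the identity outside $U$. This is false in general. The germ of $\sigma$ along $Y$ is controlled by the bundle automorphism $d\sigma|_{N_Y}=\Psi_0^{-1}\circ\Psi_1$ of $N_Y$, and the obstruction to cutting $\sigma$ off to the identity near the edge of the tubular neighbourhood $U$ is precisely the isotopy class of this automorphism in the group of oriented spin automorphisms of $N_Y$; trivializing $N_Y$ over the $3$-manifold $Y$, this is an element of $\pi_0\bigl(\mathrm{Map}(Y,\Spin(4))\bigr)$, which for $Y\cong\cS^3$ is $\pi_3(\Spin(4))\cong\Z\oplus\Z$ and hence nonzero in general. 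Worse, the cases you can handle are precisely the trivial ones: if $\Psi_0$ and $\Psi_1$ were isotopic through oriented spin isomorphisms $N_Y\to\io^*(N_{Y'})$, then independence of $\Psi$ would already follow from the families property Theorem~\ref{thm.2.15}(ii), just as in \S\ref{subsubsection.4.2.1} and \S\ref{subsubsection.4.2.3}; the only case requiring a genuine argument is exactly the one in which your $\hat\sigma$ fails to exist.

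Your overall strategy, namely to compute the flag sign via Proposition~\ref{prop.3.6} and the index sign via Propositions~\ref{prop.2.18}--\ref{prop.2.19} and Example~\ref{ex.2.20} on a common mapping torus and observe that they agree, is the same as the paper's, but that common mapping torus cannot sit over $X$. The paper replaces $X$ by the sphere bundle $\cS(N_Y\oplus\underline\R)$, the fibrewise one-point compactification of $N_Y$, which contains $U$ as an open subset and on which $\phi=\Psi_1^{-1}\circ\Psi_0$ extends automatically to a global diffeomorphism $\tilde\phi$ (fixing the section at infinity, so no cut-off is needed). It then defines a bundle $\tilde E\to\cS(N_Y\oplus\underline\R)$ agreeing with $E$ on $U$ and trivial elsewhere, computes the sign of $\Or(\tilde\phi,\Theta,\es,\es)$ on $\Or_{\tilde E\oplus\underline\C^l}$ exactly as you do, and transfers the conclusion back to $\Or_E$ by conjugating through the excision isomorphism $\Or_{E\oplus\underline\C^l}\to\Or_{\tilde E\oplus\underline\C^l}$ given by $\id_U$. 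Your mapping torus calculations should therefore be carried out on $\cS(N_Y\oplus\underline\R)_{\tilde\phi}$ rather than on the nonexistent $X_{\hat\sigma}$. (You also omit the stabilization by $\underline\C^l$ needed so that the hypothesis $2(m+l)\ge 7$ of Proposition~\ref{prop.4.9} holds, but that is minor.)
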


\begin{proof} In the situation of Definition \ref{dfn.4.7}, let $X,F,E,Y,\rho,\io,Y'$ be fixed, and let $\Psi_0,\Psi_1:N_Y\ra\io^*(N_{Y'})$ be alternative choices for $\Psi.$ Using the same tubular neighbourhoods $U,U'$ for $Y,Y'$ in $X,\cS^7,$ which do not affect $o^F_{Y,\rho,\io,\Psi_i}(E)$ by \S\ref{subsubsection.4(B)(i)}, these induce diffeomorphisms $\psi_0,\psi_1:U\ra U'.$ Let $E_0',E_1'\ra\cS^7$ be the corresponding $\SU(m)$-bundles, with $\SU(m)$-framings $\rho_0',\rho_1'$ over $\cS^7\sm Y'.$

Pick a spin structure on $Y\cong Y',$ which determines spin structures on the fibres $N_Y,N_{Y'}$ by 2-out-of-3 for spin structures, where $\Psi_0,\Psi_1$ preserve these spin structures. Write $\phi=\Psi_1^{-1}\ci\Psi_0:N_Y\ra N_Y,$ so that $\phi$ preserves orientations and spin structures on the fibres of $N_Y.$

Write $\cS(N_Y\oplus\underline{\R})$ for the sphere bundle of the vector bundle $N_Y\oplus\underline{\R}\ra Y,$ so that $\cS(N_Y\oplus\underline{\R})\ra Y$ is a $\cS^4$-bundle, containing $N_Y$ as an open set, and obtained by adding a point at infinity to each fibre $\R^4$ of $N_Y\ra Y,$ making the fibres $\R^4\amalg\{\iy\}=\cS^4.$ Then $\cS(N_Y\oplus\underline{\R})$ is a compact, oriented, spin 7-manifold, and $Y$ embeds in $\cS(N_Y\oplus\underline{\R})$ as the zero section of $N_Y.$ Write $\ti\phi:\cS(N_Y\oplus\underline{\R})\ra\cS(N_Y\oplus\underline{\R})$ for the diffeomorphism induced by~$\phi:N_Y\ra N_Y.$

As $U$ is a tubular neighbourhood of $Y$ in $X$ it is diffeomorphic to the bundle of open $\ep$-balls in $N_Y,$ so we can regard $U$ as an open neighbourhood of $Y$ in $N_Y$ and $\cS(N_Y\oplus\underline{\R}).$ Write $\ti E\ra \cS(N_Y\oplus\underline{\R})$ for the rank $m$ complex vector bundle with $\SU(m)$-structure given by $\ti E\vert_U\cong E\vert_U$ (identifying the open subsets $U$ in $X$ and $\cS(N_Y\oplus\underline{\R})$), and $\ti E\vert_{\cS(N_Y\oplus\underline{\R})\sm Y}\cong \ul\C^m\vert_{\cS(N_Y\oplus\underline{\R})\sm Y},$ identified over $U\sm Y$ by $\rho\vert_{U\sm Y}.$ Write $\ti\rho:\ul{\C}^m\vert_{\cS(N_Y\oplus\underline{\R})\sm Y}\,{\buildrel\cong\over\longra}\,\ab \ti E\vert_{\cS(N_Y\oplus\underline{\R})\sm Y}$ for the obvious $\SU(m)$-framing. Then $c_2(\ti E)$ is Poincar\'e dual to $[Y]\in H_3(\cS(N_Y\oplus\underline{\R});\Z).$

After stabilizing by $\ul{\C}^l$ for $l\ge 0$ with $2(m+l)\ge 7,$ using Proposition \ref{prop.4.9} on $N_Y\subset \cS(N_Y\oplus\underline{\R})$ we obtain an isomorphism of $\SU(m+l)$-bundles
\begin{equation*}
\Theta\colon \ti E\op\ul\C^l\longra \ti\phi^*(\ti E\op\ul\C^l)\cong\ti\phi^*(\ti E)\op\ul\C^l,
\end{equation*}
compatible outside a compact subset of $N_Y\subset \cS(N_Y\oplus\underline{\R})$ with the $\SU(m+l)$-framings induced by $\ti\rho.$ Thus Theorem \ref{thm.2.15} gives an isomorphism
\begin{equation}
\label{equation.4.3}
\Or(\ti\phi,\Th,\es,\es) \colon \Or_{\ti E\op\ul\C^l} \longra \Or_{\ti E\op\ul\C^l}.
\end{equation}

Let $\ti F$ be the unique flag structure on $\cS(N_Y\oplus\underline{\R})$ with $\ti F\vert_U=F\vert_U,$ regarding $U$ as an open subset of both $\cS(N_Y\oplus\underline{\R})$ and $X.$ Then combining Propositions \ref{prop.2.19} and \ref{prop.3.6} and Example \ref{ex.2.20}, we find that $\Or(\ti\phi,\Th,\es,\es)$ in \eqref{equation.4.3} is multiplication by the sign
\begin{equation}
\label{equation.4.4}
(\ti F/\ti\phi^*\ti F)[Y]=\bigl(\ti F\vert_U/\ti\phi\vert_U^*(\ti F\vert_U)\bigr)[Y]=
\bigl(F\vert_U/(\psi_1^{-1}\ci\psi_0)^*(F\vert_U)\bigr)[Y],
\end{equation}
since identifying subsets $U$ of $X$ and $\cS(N_Y\oplus\underline{\R})$ identifies $\ti\phi\vert_U$ with $\psi_1^{-1}\ci\psi_0.$

By functoriality of excision there is a commutative diagram
\begin{equation*}
\xymatrix@C=280pt@R=15pt{
*+[r]{\Or_{E\op\ul\C^l}} \ar[d]^{\Or(\psi_1^{-1}\ci\psi_0,\Theta\vert_U,\rho\op\id_{\ul\C^l},\rho\op\id_{\ul\C^l})}\ar[r]_(0.55){\Or(\id_U,\id_{E\op\ul\C^l\vert_U},\rho\op\id_{\ul\C^l},\ti\rho\op\id_{\ul\C^l})} & *+[l]{\Or_{\tilde E\op\ul\C^l}} \ar[d]_{\begin{subarray}{l}\Or(\ti\phi,\Th,\es,\es)= \\ \text{multiplication by \eqref{equation.4.4}}\end{subarray}} \\
*+[r]{\Or_{E\op\ul\C^l}} \ar[r]^(0.55){\Or(\id_U,\id_{E\op\ul\C^l\vert_U},\rho\op\id_{\ul\C^l},\ti\rho\op\id_{\ul\C^l})} & *+[l]{\Or_{\tilde E\op\ul\C^l},} }
\end{equation*}
which implies that $\Or(\psi_1^{-1}\ci\psi_0,\Theta\vert_U,\rho\op\id_{\ul\C^l},\rho\op\id_{\ul\C^l})$ is multiplication by~\eqref{equation.4.4}. 

Similarly, we have a commutative diagram
\begin{equation*}
\xymatrix@C=260pt@R=4pt{
*+[r]{\Or_{E\op\ul\C^l}} \ar[dd]^{\begin{subarray}{l} \Or(\psi_1^{-1}\ci\psi_0,\Theta\vert_U,\rho\op\id_{\ul\C^l},\rho\op\id_{\ul\C^l})= \\ \text{multiplication by \eqref{equation.4.4}}\end{subarray}}\ar@/^.3pc/[dr]^(0.7){\Or(\psi_0,\Xi_0\op\id_{\C^l\vert_U},\rho\op\id_{\ul\C^l},\rho'\op\id_{\ul\C^l})} 
\\
& *+[l]{\Or_{E'\op\ul\C^l},}
\\
*+[r]{\Or_{E\op\ul\C^l}} \ar@/_.3pc/[ur]_(0.7){\Or(\psi_1,\Xi_1\op\id_{\C^l\vert_U},\rho\op\id_{\ul\C^l},\rho'\op\id_{\ul\C^l})}  }
\end{equation*}
which implies that $\Or(\psi_i,\Xi_i\op\id_{\C^l\vert_U},\rho\op\id_{\ul\C^l},\rho'\op\id_{\ul\C^l})$ for $i=0,1$ differ by a factor \eqref{equation.4.4}. And for $i=0,1$ we have commutative diagrams
\begin{equation*}
\xymatrix@C=280pt@R=15pt{
*+[r]{\Or_E} \ar[d]^{\la_{E,\ul\C^l}\ot(-\ot_{\Z_2}o^\mathrm{flat}(\underline\C^l))}\ar[r]_{\Or(\psi_i,\Xi_i,\rho,\rho')} & *+[l]{\Or_{E'}} \ar[d]_{\la_{E',\ul\C^l}\ot(-\ot_{\Z_2}o^\mathrm{flat}(\underline\C^l))} \\
*+[r]{\Or_{E\op\ul\C^l}} \ar[r]^{\Or(\psi_i,\Xi_i\op\id_{\C^l\vert_U},\rho\op\id_{\ul\C^l},\rho'\op\id_{\ul\C^l})} & *+[l]{\Or_{E'\op\ul\C^l},} }
\end{equation*}
so that that $\Or(\psi_i,\Xi_i,\rho,\rho')$ for $i=0,1$ also differ by a factor \eqref{equation.4.4}. Hence
\begin{align*}
&o^F_{Y,\rho,\io,\Psi_0}(E)=\bigl(F\vert_U/\psi_0^*(F_{\cS^7}\vert_{U'})\bigr)[Y]
\cdot\Or(\psi_0,\Xi_0,\rho,\rho')^{-1}(o^\mathrm{flat}(E'))\\
&\quad=\bigl(F\vert_U/\psi_0^*(F_{\cS^7}\vert_{U'})\bigr)[Y]
\cdot\bigl(F\vert_U/(\psi_1^{-1}\ci\psi_0)^*(F\vert_U)\bigr)[Y]\cdot\\
&\qquad\qquad
\Or(\psi_1,\Xi_1,\rho,\rho')^{-1}(o^\mathrm{flat}(E'))\\
&\quad=\bigl(F\vert_U/\psi_1^*(F_{\cS^7}\vert_{U'})\bigr)[Y]
\cdot\Or(\psi_1,\Xi_1,\rho,\rho')^{-1}(o^\mathrm{flat}(E'))=o^F_{Y,\rho,\io,\Psi_1}(E),
\end{align*}
using \eq{equation.4.2} in the first and fourth steps, that $\Or(\psi_i,\Xi_i,\rho,\rho')$ for $i=0,1$ differ by \eqref{equation.4.4} in the second, and functoriality of $F'/F$ in Proposition \ref{prop.3.3}(b) in the third. This completes the proof.
\end{proof}

\subsubsection{$o^F_{Y,\rho,\io,\Psi}(E)$ is independent of $\io:Y\hookra\cS^7$ for fixed $Y,\rho$}
\label{subsubsection.4(B)(iii)}

In a similar way to \S\ref{subsubsection.4(B)(i)}, this is immediate from Theorem \ref{thm.4.5}(ii) and the families property Theorem \ref{thm.2.15}(ii) of excision isomorphisms.

\subsubsection{$o^F_{Y,\rho,\io,\Psi}(E)$ is independent of $s,Y,\rho$}
\label{subsubsection.4(B)(iv)}

\begin{prop}
$o^F_{Y,\rho,\io,\Psi}(E)$ is independent of\/~$s,Y,\rho.$
\end{prop}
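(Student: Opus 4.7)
The plan is a bordism argument combined with the families properties of Proposition~\ref{prop.4.2}(ii) and Theorem~\ref{thm.2.15}(ii). Given two sets of data $(s_0,Y_0,\rho_0,\iota_0,\Psi_0)$ and $(s_1,Y_1,\rho_1,\iota_1,\Psi_1)$, I would first connect $s_0,s_1$ by a generic smooth path $s_t$ in the affine space of sections of $\Hom(\underline{\C}^{m-1},E)$. By genericity the total section $S$ over $X\times[0,1]$ is transverse to the determinantal stratification, so its degeneracy locus $Z\subset X\times[0,1]$ is a compact oriented $4$-submanifold with $\partial Z=(Y_1\times\{1\})\amalg-(Y_0\times\{0\})$, and the restriction of $S$ to the complement yields a framing $R$ of $\pi_X^*E$ extending $\rho_0,\rho_1$. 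After a small perturbation one may arrange $Y_0\cap Y_1=\emptyset$ in $X$.

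The second step is to extend the embedding and normal data across the bordism. Since $\dim Z=4<8=\dim(\cS^7\times[0,1])$ lies in the stable embedding range, I would find an embedding $\tilde\iota\colon Z\hookrightarrow\cS^7\times[0,1]$ restricting to $\iota_i\times\{i\}$ on the boundary components, with $\iota_0(Y_0),\iota_1(Y_1)$ arranged disjointly by Theorem~\ref{thm.4.5}(ii). Next extend the $\Psi_i$ to an orientation- and spin-preserving isomorphism $\tilde\Psi\colon N_Z\to\tilde\iota^*N_{\tilde\iota(Z)}$; if the primary obstruction in $H^4(Z,\partial Z;\pi_3\Spin(4))$ is nonzero, I would modify one of the $\Psi_i$ by a rotation supported near a point, which does not change the orientation by the already-proven independence of $\Psi$ in \S\ref{subsection.4.2}. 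Finally choose tubular neighborhoods $U,U'$ of $Z,\tilde\iota(Z)$ and a spin diffeomorphism $\tilde\psi\colon U\to U'$ covering $(\tilde\iota,\tilde\Psi)$, arranged to preserve the $[0,1]$-coordinate so that $\tilde\psi$ restricts to $\psi_i\colon U_i\to U'_i$ on the boundary slices.

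Defining $\tilde E'\to\cS^7\times[0,1]$ as in Definition~\ref{dfn.4.7} from this data yields a $[0,1]$-family of stably trivial $\SU(m)$-bundles on $\cS^7$ with $\tilde E'|_{t=i}=E'_i$. By Proposition~\ref{prop.4.2}(ii) the flat orientation $t\mapsto o^{\mathrm{flat}}(\tilde E'_t)$ is a continuous section of the orientation double cover over $[0,1]$, and by Theorem~\ref{thm.2.15}(ii) the excision isomorphism $\Or(\tilde\psi_t,\cdot)$ varies continuously in $t$. Composing, the element $t\mapsto\Or(\tilde\psi_t,\cdot)^{-1}\bigl(o^{\mathrm{flat}}(\tilde E'_t)\bigr)\in\Or_E$ is a continuous $\{\pm1\}$-valued function on a connected interval, hence constant.

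The hard part will be to verify that the flag structure correction $(F|_{U_i}/\psi_i^*(F_{\cS^7}|_{U'_i}))[Y_i]$ from \eqref{equation.4.2} takes the same value at $i=0,1$. Since $[Y_0]=[Y_1]$ in $H_3(X;\Z)$ (both Poincar\'e dual to $c_2(E)$) and $\pi_X(Z)$ is a $4$-chain in $X$ with $\partial\pi_X(Z)=Y_1-Y_0$, Definition~\ref{dfn.3.2}(ii) combined with \eqref{equation.3.1} expresses the ratio of the two corrections as $(-1)^N$ where
\begin{equation*}
N=(Y_1'-Y_0')\bullet\pi_X(Z)+((\iota_1Y_1)'-(\iota_0Y_0)')\bullet\pi_{\cS^7}(\tilde Z).
\end{equation*}
Each $7$-dimensional intersection number equals, by genericity and dimension counting, the corresponding $8$-dimensional intersection number in $X\times[0,1]$ (respectively $\cS^7\times[0,1]$) of the cylinder $Y_i'\times[0,1]$ (respectively $(\iota_iY_i)'\times[0,1]$) with $Z$ (respectively $\tilde Z$). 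The diffeomorphism $\tilde\psi$ identifies $Z$ with $\tilde Z$ and sends boundary perturbations to boundary perturbations, so the two $8$-dimensional counts agree mod $2$. Hence $N\equiv0\pmod 2$, the flag corrections cancel, and the argument concludes.
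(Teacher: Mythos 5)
Your high-level plan — connect $s_0,s_1$ by a generic path, look at the degeneracy locus $Z\subset X\times[0,1]$ as a bordism, and then use the families properties to push orientations across — matches the opening of the paper's argument, but after producing $Z$ the proofs diverge and your version has two genuine gaps.

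The first gap is in the claim that $t\mapsto\Or(\tilde\psi_t,\cdot)^{-1}\bigl(o^{\mathrm{flat}}(\tilde E'_t)\bigr)$ is a continuous $[0,1]$-family. Genericity forces $Z$ to be tangent to the slices $X\times\{t_i\}$ at finitely many interior critical points $(x_i,t_i)$; at these $t_i$ the slice $Y_{t_i}$ is a \emph{singular} $3$-cycle (a surgery is happening), not a submanifold, and the normal bundle $N_Z$ at $(x_i,t_i)$ contains the $[0,1]$-direction. So no tubular-neighbourhood diffeomorphism $\tilde\psi$ of $Z$ can be arranged to ``preserve the $[0,1]$-coordinate'' there. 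Moreover Theorem~\ref{thm.2.15}(ii) requires \emph{fixed} open sets $U^\pm\subset X^\pm$ with a $P$-family of diffeomorphisms $U^+\to U^-$; your slices $U_t=U\cap(X\times\{t\})$ change with $t$ and are not covered by that statement as written. So continuity of the excision family across each $t_i$ is exactly the issue that must be addressed, not assumed.

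The second gap is in the flag-structure comparison. Your reduction of the ratio to $(-1)^N$ with $N$ a sum of two intersection numbers is plausible, but the claim that $\tilde\psi$ ``identifies'' the two $8$-dimensional counts does not hold: $\tilde\psi$ is a diffeomorphism of tubular neighbourhoods of $Z$ and $\tilde\iota(Z)$ only, and only near the boundary $Y_i\times\{i\}$ does it send the cylinder $Y_i'\times[0,1]$ approximately to $(\iota_iY_i)'\times[0,1]$. At an interior intersection point of $Z$ with $Y_1'\times[0,1]$ (which can occur at any $t$), $\tilde\psi$ has no reason to carry the cylinder to the corresponding cylinder over $\cS^7$, so the two mod-$2$ counts are not matched up pointwise. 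Showing $D_X\equiv D_{\cS^7}\pmod 2$ this way is essentially re-proving the independence of $\Psi$ and $\iota$ from scratch, and the computation is not closed.

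The paper avoids both problems by working \emph{locally} in $t$. On each connected component of $[0,1]\setminus\{t_1,\ldots,t_k\}$ the family is genuinely a smooth isotopy of embedded $3$-submanifolds, so Theorem~\ref{thm.2.15}(ii) applies and the orientation is locally constant. To cross a critical $t_i$ it observes that for small $\ep$ the projection $\pi_X$ embeds $Z\cap(X\times[t_i-\ep,t_i+\ep])$ as a compact $4$-manifold-with-boundary $W_i\subset X$, invokes Wall's theorem (Theorem~\ref{thm.4.6}) to embed $W_i\hookrightarrow\cS^7$, and then uses a \emph{single} spin diffeomorphism $\chi\colon V\to V'$ of tubular neighbourhoods of $W_i$ for both $t_i\pm\ep$. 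With the same $\chi$ and the same $[Y]\in H_3(V;\Z)$ (since $W_i$ is a bordism in $V$ between $Y_{t_i-\ep}$ and $Y_{t_i+\ep}$) the flag correction $(F\vert_V/\chi^*(F_{\cS^7}\vert_{V'}))[Y]$ is literally the same factor on both sides, so no intersection-theoretic comparison is needed; the remaining equality of excision images follows from a genuine Theorem~\ref{thm.2.15}(ii) deformation of the framings $\rho_t\vert_{X\setminus W_i}$. You should reorganise your argument along those lines, using Wall's embedding theorem to collapse the crossing of each singular time to a problem with fixed excision data.
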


\begin{proof} In Definition \ref{dfn.4.7}, let $s_0,s_1:\ul\C^{m-1}\ra E$ be alternative generic choices for $s,$ and let $Y_0,\rho_0,\io_0,\Psi_0,\ldots$ and $Y_1,\rho_1,\io_1,\Psi_1,\ldots$ be subsequent choices, so we have orientations $o^F_{Y_0,\rho_0,\io_0,\Psi_0}(E)$ and $o^F_{Y_1,\rho_1,\io_1,\Psi_1}(E)$ in $\Or_E.$ 

Choose a generic morphism $\check s:\underline\C^{m-1}\t[0,1] \to E\t[0,1]$ over $X\t[0,1]$ with $\check s\vert_{X\t\{i\}}=s_i$ for $i=0,1,$ and let $Z$ be the degeneracy locus of $\check s.$ Then as in Construction \ref{con.4.4}, $Z\subset X\t[0,1]$ is a compact embedded 4-submanifold with boundary~$\partial Z=(Y_0\t\{0\})\amalg(Y_1\t\{1\}).$

By genericness, $Z$ intersects the hypersurface $X\t\{t\}$ in $X\t[0,1]$ for $t\in[0,1]$ transversely, except at finitely many points $(x_i,t_i)$ for $i=1,\ldots,k,$ with $0<t_1<\cdots<t_k<1.$ Also the projection $\pi_X\vert_Z:Z\ra X$ is an immersion except at finitely many points $(\ti x_j,\ti t_j)$ for $j=1,\ldots,l,$ where~$\{t_1,\ldots,t_k\}\cap\{\ti t_1,\ldots,\ti t_l\}=\es.$

Define $Y_t=\bigl\{y\in X:(y,t)\in Z\bigr\}$ for each $t\in [0,1].$ If  
$t\in[0,1]\sm\{t_1,\ldots,t_k\}$ then $X\t\{t\}$ intersects $Z$ transversely, so $Y_t$ is a compact embedded 3-submanifold of $X,$ which depends smoothly on $t.$ But when $t=t_i,$ $Y_{t_i}$ is generally singular at $x_i,$ and the topology of $Y_t$ changes by a surgery as $t$ crosses $t_i$ in~$[0,1].$ 

For $t\in[0,1]\sm\{t_1,\ldots,t_k\}$ we have an orientation $o^F_{Y_t,\rho_t,\io_t,\Psi_t}(E)$ from Definition \ref{dfn.4.7} with $s_t=\check s\vert_{X\t\{t\}}$ and $Y_t$ in place of $s$ and $Y,$ where  \S\ref{subsubsection.4(B)(i)}--\S\ref{subsubsection.4(B)(iii)} imply these are independent of the additional choices $\io_t,\Psi_t,\ldots.$ Locally in $t$ we can make these additional choices depend smoothly on $t.$ Hence Theorem \ref{thm.2.15}(ii) implies that for $t$ in each connected component of $[0,1]\sm\{t_1,\ldots,t_k\}$ this $o^F_{Y_t,\rho_t,\io_t,\Psi_t}(E)$ depends continuously on $t,$ and hence is constant. Thus, to show that $o^F_{Y_0,\rho_0,\io_0,\Psi_0}(E)=o^F_{Y_1,\rho_1,\io_1,\Psi_1}(E),$ it suffices to prove that 
\begin{equation}
\label{equation.4.5}
o^F_{Y_{t_i-\ep},\rho_{t_i-\ep},\io_{t_i-\ep},\Psi_{t_i-\ep}}(E)=o^F_{Y_{t_i+\ep},\rho_{t_i+\ep},\io_{t_i+\ep},\Psi_{t_i+\ep}}(E)
\end{equation}
for all $i=1,\ldots,k,$ where $\ep>0$ is small.

Since $\{t_1,\ldots,t_k\}\cap\{\ti t_1,\ldots,\ti t_l\}=\es,$ if $\ep$ is small then $[t_i-\ep,t_i+\ep]$ contains no $\ti t_j$ for $j=1,\ldots,l,$ so that 
$\pi_X\vert_{\cdots}:Z\cap (X\t[t_i-\ep,t_i+\ep])\ra X$ is an immersion. As it is injective on $Y_{t_i},$ which is compact, making $\ep$ smaller we can suppose this is an embedding, so that $W_i\coloneqq\pi_X\bigl(Z\cap (X\t[t_i-\ep,t_i+\ep])\bigr)$ is an embedded 4-submanifold in $X$ with boundary $\pd W_i=Y_{t_i-\ep}\amalg Y_{t_i+\ep}.$ As the bordism $W_i$ involves only a single surgery at $(x_i,t_i),$ each connected component of $W_i$ must have nonempty boundary.

By Theorem \ref{thm.4.6} there exists an embedding $\jmath:W_i\hookra\cS^7.$ Since $X$ and $\cS^7$ are both oriented and spin, the normal bundles of $W_i$ in $X$ and in $\cS^7$ are (noncanonically) isomorphic. Hence we can choose open tubular neighbourhoods $V$ of $W_i$ in $X$ and $V'$ of $W_i'=\jmath(W_i)$ in $\cS^7$ and a spin diffeomorphism~$\chi:V\ra V'.$

Let $U_{t_i\pm\ep}$ be tubular neighbourhoods of $Y_{t_i\pm\ep}$ in $V.$ By \S\ref{subsubsection.4(B)(i)}--\S\ref{subsubsection.4(B)(iii)} we are free to define 
$o^F_{Y_{t_i\pm\ep},\rho_{t_i\pm\ep},\io_{t_i\pm\ep},\Psi_{t_i\pm\ep}}(E)$ using $\io_{t_i\pm\ep}=\chi\vert_{Y_{t_i\pm\ep}},$ $U_{t_i\pm\ep},$ $U'_{t_i\pm\ep}=\xi(U_{t_i\pm\ep})$ and $\psi_{t_i\pm\ep}=\chi\vert_{U_{t_i\pm\ep}}.$ Then we have
\begin{align*}
&o^F_{Y_{t_i-\ep},\rho_{t_i-\ep},\io_{t_i-\ep},\Psi_{t_i-\ep}}(E)\\
&=\bigl(F\vert_{U_{t_i-\ep}}/\psi_{t_i-\ep}^*(F_{\cS^7}\vert_{U'_{t_i-\ep}})\bigr)[Y]
\cdot{}\\
&\qquad \Or(\psi_{t_i-\ep},\Xi_{t_i-\ep}\vert_{U_{t_i-\ep}},\rho_{t_i-\ep},\rho'_{t_i-\ep})^{-1}(o^\mathrm{flat}(E'_{t_i-\ep}))\\
&=\bigl(F\vert_V/\chi^*(F_{\cS^7}\vert_{V'})\bigr)[Y]
\cdot\Or(\chi,\Xi_{t_i-\ep},\rho_{t_i-\ep}\vert_{X\sm W_i},\rho'_{t_i-\ep}\vert_{\cS^7\sm W_i'})^{-1}(o^\mathrm{flat}(E'_{t_i-\ep}))\\
&=\bigl(F\vert_V/\chi^*(F_{\cS^7}\vert_{V'})\bigr)[Y]
\cdot\Or(\chi,\Xi_{t_i+\ep},\rho_{t_i+\ep}\vert_{X\sm W_i},\rho'_{t_i+\ep}\vert_{\cS^7\sm W_i'})^{-1}(o^\mathrm{flat}(E'_{t_i+\ep}))\\
&=\bigl(F\vert_{U_{t_i+\ep}}/\psi_{t_i+\ep}^*(F_{\cS^7}\vert_{U'_{t_i+\ep}})\bigr)[Y]
\cdot{}\\
&\qquad \Or(\psi_{t_i+\ep},\Xi_{t_i+\ep}\vert_{U_{t_i+\ep}},\rho_{t_i+\ep},\rho'_{t_i+\ep})^{-1}(o^\mathrm{flat}(E'_{t_i+\ep}))\\
&=o^F_{Y_{t_i+\ep},\rho_{t_i+\ep},\io_{t_i+\ep},\Psi_{t_i+\ep}}(E).
\end{align*}
Here the first and fifth steps come from \eqref{equation.4.2}. In the second and fourth steps we use $\psi_{t_i\pm\ep}=\chi\vert_{U_{t_i\pm\ep}},$ expanding the open sets $U_{t_i\pm\ep},U_{t_i\pm\ep}'$ to $V,V',$ and shrinking the domains $X\sm Y_{t_i\pm\ep},\cS^7\sm Y'_{t_i\pm\ep}$ of $\rho_{t_i\pm\ep},\rho'_{t_i\pm\ep}$ to $X\sm W_i,\cS^7\sm W_i'.$ 

In the third step, with $E,V,\chi$ fixed, we deform the $\SU(m)$-framing $\rho_t\vert_{X\sm W_i}:\ul\C^m\vert_{X\sm W_i}\ra E\vert_{X\sm W_i}$ defined using $s_t$ smoothly over $t\in[t_i-\ep,t_i+\ep],$ and hence also smoothly deforming the data $E_t',\Xi_t,\rho_t'\vert_{\cS^7\sm W_i'}$ constructed using $\rho_t\vert_{X\sm W_i}.$ Theorem \ref{thm.2.15}(ii) implies that the corresponding family of orientations deforms continuously in $t\in[t_i-\ep,t_i+\ep],$ so has the same value at $t_i\pm\ep.$ This proves equation \eqref{equation.4.5}, and the proposition.
\end{proof}

\subsubsection{The orientations $o^F(E)$ are well-defined}
\label{subsubsection.4(B)(v)}

Sections \ref{subsubsection.4(B)(i)}--\ref{subsubsection.4(B)(iv)} have shown that $o^F_{Y,\rho,\io,\Psi}(E)$ in Definition \ref{dfn.4.7} depends only on $X,F,E,$ and not on the additional choices $s,Y,\rho,\io,\Psi,U,U',\psi.$ Thus we can now define canonical orientations $o^F(E)=o^F_{Y,\rho,\io,\Psi}(E)\in\Or_E$ for all $X,F$ and $\SU(m)$-bundles $E\ra X,$ as in the first part of Theorem \ref{thm.1.2}.

\subsection{Verification of the axioms}

Axiom \eqref{equation.1.5} in Theorem \ref{thm.1.2}(a) is obvious.

\begin{prop}
\label{prop.4.12}
Let\/ $E_1,E_2 \to X$ be\/ $\SU(m_1)$- and\/ $\SU(m_2)$-bundles. Then under \eqref{equation.1.2} we have\/ $o^F(E_1\oplus E_2) = o^F(E_1) \cdot o^F(E_2),$ proving Theorem\/ {\rm\ref{thm.1.2}(i)}. Taking\/ $E_2=\ul{\C}$ gives the stabilization axiom \eqref{equation.1.6} in Theorem\/~{\rm\ref{thm.1.2}(b)}.
\end{prop}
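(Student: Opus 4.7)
The plan is to compute both sides of \eqref{equation.1.8} from the defining formula \eqref{equation.4.2} with compatible choices of auxiliary data on each side, arranging things so that the data for $E_1\op E_2$ decouples into the data for $E_1$ and $E_2$ separately. First I would pick generic morphisms $s_i\colon\ul\C^{m_i-1}\ra E_i$ as in Definition \ref{dfn.4.4}, producing degeneracy loci $Y_i\subset X$ and $\SU(m_i)$-framings $\rho_i$ outside $Y_i,$ perturbed so that $Y_1\cap Y_2=\es$. For the rank-$(m_1+m_2)$ bundle $E_1\op E_2$ I would use the pair $(Y,\rho)\coloneqq(Y_1\amalg Y_2,\rho_1\op\rho_2)$. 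This pair does not literally arise as the degeneracy locus and framing of any generic morphism $\ul\C^{m_1+m_2-1}\ra E_1\op E_2,$ so a preliminary step is to extend the independence proof of \S\ref{subsubsection.4.2.4} to cover $(Y,\rho)$: one exhibits a cobordism $Z\subset X\t[0,1]$ with $\pd Z=Y\amalg\ti Y$ and a compatible framing outside $Z$ joining $(Y,\rho)$ to any generic-section-derived pair $(\ti Y,\ti\rho)$ for $E_1\op E_2,$ which exists because $[Y]$ and $[\ti Y]$ are both Poincar\'e dual to $c_2(E_1\op E_2),$ and then the surgery analysis of \S\ref{subsubsection.4.2.4} carries through.

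Next I would make compatible choices of the remaining data by isotoping $\io_i\colon Y_i\hookra\cS^7$ via Theorem \ref{thm.4.5}(ii) so that $\io_1(Y_1)\cap\io_2(Y_2)=\es$, and assembling them into $\io=\io_1\amalg\io_2$, with matching $\Psi=\Psi_1\amalg\Psi_2,$ tubular neighbourhoods $U=U_1\sqcup U_2\subset X,$ $U'=U_1'\sqcup U_2'\subset\cS^7,$ and diffeomorphism $\psi=\psi_1\amalg\psi_2\colon U\ra U'$. The key observation is that the bundle $E'\ra\cS^7$ produced by Definition \ref{dfn.4.7} then canonically decomposes as $E'\cong E_1'\op E_2',$ since near $\io_1(Y_1)$ the summand $E_2'$ is trivialized by $\rho_2$ and vice versa.

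Finally I would combine three multiplicative factors to conclude. The flag-structure factor splits as
\begin{equation*}
\bigl(F\vert_U/\psi^*(F_{\cS^7}\vert_{U'})\bigr)[Y]=\prod_{i=1,2}\bigl(F\vert_{U_i}/\psi_i^*(F_{\cS^7}\vert_{U_i'})\bigr)[Y_i],
\end{equation*}
because this quantity is a group homomorphism $H_3(U;\Z)\ra\{\pm 1\}$ by Proposition \ref{prop.3.3}(b) and $[Y]=[Y_1]+[Y_2]$ in $H_3(U;\Z)=H_3(U_1;\Z)\op H_3(U_2;\Z)$. The flat orientation factor splits by Proposition \ref{prop.4.2}(v) as $o^\mathrm{flat}(E')=o^\mathrm{flat}(E_1')\cdot o^\mathrm{flat}(E_2')$. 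For the excision isomorphism, I would first extend each $\Or(\psi_i,\Xi_i,\rho_i,\rho_i')$ from $\psi_i\colon U_i\ra U_i'$ to the common $\psi\colon U\ra U'$ using Theorem \ref{thm.2.15}(v) (extending $\Xi_i$ trivially across the other component via the framings), and then apply Theorem \ref{thm.2.15}(iv) to conclude that $\Or(\psi,\Xi,\rho,\rho')$ corresponds under the direct-sum isomorphism \eqref{equation.2.3} to $\Or(\psi_1,\Xi_1,\rho_1,\rho_1')\ot\Or(\psi_2,\Xi_2,\rho_2,\rho_2')$. Combining these three decompositions yields \eqref{equation.1.8}. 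The stabilization axiom \eqref{equation.1.6} is then the case $E_2=\ul\C,$ using the already-established normalization \eqref{equation.1.5} and Proposition \ref{prop.2.14}(iv). The main obstacle is the first step, the cobordism-based extension of independence to accommodate the pair $(Y_1\amalg Y_2,\rho_1\op\rho_2),$ since this substitution must be justified before Definition \ref{dfn.4.7} may be applied with this non-generic-section-derived data.
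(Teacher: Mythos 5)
Your proof follows essentially the same route as the paper's: pick generic $s_i$ giving $(Y_i,\rho_i)$ with $Y_1\cap Y_2=\emptyset$, form $Y=Y_1\amalg Y_2$, $\rho=\rho_1\oplus\rho_2$, and compatible disjoint data $\io,\Psi,U,U',\psi$ on both $X$ and $\cS^7$; then split each of the three factors in \eqref{equation.4.2} using (respectively) the homomorphism property of $F'/F$ from Proposition \ref{prop.3.3}(b), Proposition \ref{prop.4.2}(v), and compatibility of excision with $\lambda$ and with restriction (Theorem \ref{thm.2.15}(iv),(v)). That part is exactly the paper's argument, correctly executed.

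The one place where you diverge is the ``preliminary step.'' You assert that $(Y_1\amalg Y_2,\,\rho_1\oplus\rho_2)$ ``does not literally arise as the degeneracy locus and framing of any generic morphism $\ul\C^{m_1+m_2-1}\ra E_1\oplus E_2$,'' and therefore that a separate cobordism argument is needed before Definition \ref{dfn.4.7} can be applied. This premise is doubtful. Taking $s=s_1\oplus s_2\oplus\sigma:\ul\C^{m_1-1}\oplus\ul\C^{m_2-1}\oplus\ul\C\to E_1\oplus E_2$, where $\sigma$ is a generic extension over $Y_1$ of $\rho_1|_{\ul\C}$, one checks directly that $s$ is generic, that its degeneracy locus is exactly $Y_1\amalg Y_2$ (since $\sigma$ lands in $E_1$ orthogonal to $\operatorname{im}s_1$ away from $Y_1$, and for generic $\sigma$ the codimension-$4$ incidence condition over the $3$-manifold $Y_1$ is avoided), and that the derived framing agrees with $\rho_1\oplus\rho_2$ up to a constant permutation of the trivial factors. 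Any such constant $\SU(m_1+m_2)$-reordering leaves the orientation unchanged by Proposition \ref{prop.2.19} (and a determinant $-1$ permutation may be fixed by composing with $\operatorname{diag}(-1,1,\dots,1)$, after which the two framings are homotopic in the connected group $\SU(m_1+m_2)$, so Theorem \ref{thm.2.15}(ii) applies). Hence the pair does arise as in Definition \ref{dfn.4.4}, and the paper rightly invokes $o^F(E_1\oplus E_2)=o^F_{Y,\rho,\io,\Psi}(E_1\oplus E_2)$ without your extra step. Your cobordism detour is also a bit loose as stated: the surgery analysis of \S\ref{subsubsection.4.2.4} uses a $Z$ realized as the degeneracy locus of a generic morphism over $X\times[0,1]$, not merely any $4$-chain with $\partial Z=Y\amalg\tilde Y$, so you would need to realize $Z$ in that form rather than appealing only to $[Y]=[\tilde Y]$. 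None of this affects the correctness of the rest of your argument, which matches the paper's.
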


\begin{proof} In the situation of Definition \ref{dfn.4.7}, pick generic $s_k\colon \underline\C^{m_k-1}\to E_k$ for $k=1,2,$ and let $Y_k,\rho_k,\io_k,Y_k',\Psi_k,U_k,\psi_k,U_k',\ldots$ be the subsequent choices. By genericity we may assume that $Y_1 \cap Y_2=\emptyset$ and $Y'_1\cap Y'_2=\emptyset,$ and making the tubular neighbourhoods smaller we can take $U_1\cap U_2=\emptyset$ and~$U'_1\cap U'_2=\emptyset.$

As in \S\ref{subsubsection.4(B)(v)} we have $o^F(E_k)=o^F_{Y_k,\rho_k,\io_k,\Psi_k}(E_k)$ for $k=1,2.$ Also we may write $o^F(E_1\oplus E_2) =o^F_{Y,\rho,\io,\Psi}(E_1\oplus E_2),$ where $Y=Y_1\amalg Y_2,$ $\rho=\rho_1\vert_{X\sm Y}\op \rho_2\vert_{X\sm Y},$ $\io=\io_1\amalg \io_2,$ $\Psi=\Psi_1\amalg\Psi_2,$ and  $o^F_{Y,\rho,\io,\Psi}(E_1\oplus E_2)$ is defined using $U=U_1\amalg U_2,$ $\psi=\psi_1\amalg\psi_2,$ $U'=U_1'\amalg U_2',$ $E'=E_1'\op E_2',$ and~$\rho'=\rho'_1\vert_{\cS^7\sm Y'}\op \rho_2'\vert_{\cS^7\sm Y'}.$

Proposition \ref{prop.4.2}(v) gives
\begin{equation*}
o^\mathrm{flat}(E') = o^\mathrm{flat}(E_1')\cdot o^\mathrm{flat}(E_2').
\end{equation*}
By applying $\Or(\psi, \Psi,\rho,\rho')$ to this equation and using compatibility of excision with $\lambda$ and with restriction we find that
\begin{align*}
\Or&(\psi, \Psi,\rho,\rho')\bigl( o^\mathrm{flat}(E') \bigr)\\
&= \Or(\psi_1, \Psi_1,\rho_1,\rho_1') \bigl( o^\mathrm{flat}(E_1')\bigr)
\cdot\Or(\psi_2, \Psi_2,\rho_2,\rho_2') \bigl( o^\mathrm{flat}(E_2')\bigr).
\end{align*}
The proposition then follows from \eqref{equation.4.2} by multiplying this equation by
\begin{align*}
&\bigl(F\vert_U/\psi_*(F_{\cS^7}\vert_{U'})\bigr)[Y_1 \cup Y_2] \\
&\quad =\bigl(F\vert_{U_1}/(\psi_1)_*(F_{\cS^7}\vert_{U_1})\bigr)[Y_1]
\cdot\bigl(F\vert_{U_2}/(\psi_2)_*(F_{\cS^7}\vert_{U_2})\bigr)[Y_2].\qedhere
\end{align*}
\end{proof}

\begin{prop}
The excision axiom \eqref{equation.1.7} in Theorem\/~{\rm\ref{thm.1.2}(c)} holds.
\end{prop}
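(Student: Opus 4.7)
The plan is to build the orientations $o^{F^\pm}(E^\pm)$ using \textbf{compatible} choices on the two sides, so that the excision data $(\phi,\Phi,\rho^+,\rho^-)$ intertwines the two constructions up to a flag structure sign. First, choose a generic morphism $\tilde s^+\colon \ul\C^{m-1}|_{U^+}\ra E^+|_{U^+}$ which outside a compact subset of $U^+$ equals the first $m-1$ columns of the frame $\rho^+$; extend by these columns on $X^+\sm U^+$ to obtain a global generic $s^+\colon\ul\C^{m-1}\ra E^+$, whose degeneracy locus $Y^+$ lies in $U^+$ and whose induced $\SU(m)$-framing $\rho_s^+$ (from \S\ref{subsubsection.4.1.2}) agrees with $\rho^+$ outside a compact subset of $U^+$. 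By Definition~\ref{dfn.4.4}, $[Y^+]\in H_3(U^+;\Z)$ is Poincar\'e dual to $c_2(E^+|_{U^+},\rho^+)$, so $[Y^+]=\al^+$. Transport $s^+$ by setting $s^-$ equal to $\phi_*(\Phi\ci\tilde s^+)$ on $U^-$ and equal to the first $m-1$ columns of $\rho^-$ outside; the compatibility $\Phi\ci\rho^+=\phi^*\rho^-$ ensures $s^-$ is well-defined and generic with degeneracy locus $Y^-=\phi(Y^+)\subset U^-$ and induced frame $\rho_s^-$ satisfying $\Phi\ci\rho_s^+=\phi^*\rho_s^-$ outside a compact subset of $U^+$.

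Next, pick an embedding $\io^+\colon Y^+\hookra\cS^7$, a normal bundle isomorphism $\Psi^+$, and tubular data $U_{Y^+}\subset U^+$, $U'\subset\cS^7$, $\psi^+\colon U_{Y^+}\ra U'$, and set $\io^-=\io^+\ci(\phi|_{Y^+})^{-1}$, $\Psi^-=\Psi^+\ci(\d\phi|_{N_{Y^+}})^{-1}$, $U_{Y^-}=\phi(U_{Y^+})$, $\psi^-=\psi^+\ci(\phi|_{U_{Y^+}})^{-1}$, so $Y'^+=Y'^-=Y'$ and $\psi^-\ci\phi|_{U_{Y^+}}=\psi^+$. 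By construction the resulting bundles $E'^\pm\ra\cS^7$ are joined by a canonical $\SU(m)$-isomorphism $\Xi_{\cS^7}\colon E'^+\ra E'^-$ with $\Xi_{\cS^7}|_{U'}=(\psi^-)_*(\Phi)$ and $\Xi_{\cS^7}\ci\rho'^+=\rho'^-$ off $Y'$. Since $E'^\pm$ are stably trivial on $\cS^7$ (Example~\ref{ex.4.3}), Proposition~\ref{prop.4.2}(iii) yields
\begin{equation*}
\Or(\Xi_{\cS^7})\bigl(o^\mathrm{flat}(E'^+)\bigr)=o^\mathrm{flat}(E'^-).
\end{equation*}
Combining Theorem~\ref{thm.2.15}(i), (v) with the identity $\psi^-\ci\phi|_{U_{Y^+}}=\psi^+$ and $\Xi^-\ci\Phi=(\psi^+)^*(\Xi_{\cS^7})\ci\Xi^+$ gives the commutative square
\begin{equation*}
\Or(\psi^-,\Xi^-,\rho_s^-,\rho'^-)\ci\Or(\phi,\Phi,\rho^+,\rho^-)=\Or(\Xi_{\cS^7})\ci\Or(\psi^+,\Xi^+,\rho_s^+,\rho'^+),
\end{equation*}
where I have used that $\rho_s^\pm$ and $\rho^\pm$ agree outside compact subsets of $U^\pm$, so Theorem~\ref{thm.2.15}(v) lets me freely pass between them.

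Substituting into \eqref{equation.4.2} for $o^{F^+}(E^+)$ and applying the square gives
\begin{equation*}
\Or(\phi,\Phi,\rho^+,\rho^-)\bigl(o^{F^+}(E^+)\bigr)=\epsilon\cdot o^{F^-}(E^-),
\end{equation*}
where
\begin{equation*}
\epsilon=\frac{\bigl(F^+|_{U_{Y^+}}/\psi^{+*}(F_{\cS^7}|_{U'})\bigr)[Y^+]}{\bigl(F^-|_{U_{Y^-}}/\psi^{-*}(F_{\cS^7}|_{U'})\bigr)[Y^-]}.
\end{equation*}
Since $\psi^{+*}(F_{\cS^7}|_{U'})=\phi^*\bigl(\psi^{-*}(F_{\cS^7}|_{U'})\bigr)$ on $U_{Y^+}$ and $\phi_*[Y^+]=[Y^-]$, the cocycle property of the quotient in Proposition~\ref{prop.3.3}(b) collapses $\epsilon$ to $\bigl(F^+|_{U_{Y^+}}/\phi^*(F^-|_{U_{Y^-}})\bigr)[Y^+]$. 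Since the quotient group morphism is induced from the ambient $U^\pm$ via restriction, this equals $\bigl(F^+|_{U^+}/\phi^*(F^-|_{U^-})\bigr)[Y^+]=\bigl(F^+|_{U^+}/\phi^*(F^-|_{U^-})\bigr)(\al^+)$, which is \eqref{equation.1.7}. The main technical point is the bookkeeping in the second paragraph: arranging tubular neighbourhoods, frames, and lifts so that Theorem~\ref{thm.2.15}(i) applies with the identification $\psi^+=\psi^-\ci\phi$ on the nose; once this is in place, the flag structure cocycle manipulation in the third paragraph is routine.
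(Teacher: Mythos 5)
Your proposal is correct and follows essentially the same route as the paper: extend the given frame $\rho^\pm$ to a global section $s^\pm$ of $\Hom(\underline\C^{m-1},E^\pm)$ generic inside a compact subset of $U^\pm$, transport all the auxiliary data $(Y^\pm,\rho_s^\pm,\io^\pm,\Psi^\pm,\psi^\pm)$ across $\phi,\Phi$, reduce to the model bundle on $\cS^7$, and combine functoriality of the excision isomorphism (Theorem~\ref{thm.2.15}(i),(v)) with Proposition~\ref{prop.4.2}(iii) and the cocycle property of $F'/F$. The only cosmetic divergence is that the paper arranges things so that one literally obtains a single bundle $E'\to\cS^7$ from either side, whereas you build $E'^+,E'^-$ together with a canonical isomorphism $\Xi_{\cS^7}$ between them; these are equivalent bookkeeping choices, though note a small slip: $\Xi_{\cS^7}|_{U'}$ should be $\psi^+_*(\Phi)$ rather than $(\psi^-)_*(\Phi)$, since $\Phi$ lives over $U^+$ and $\psi^+_*\ci\phi^*=\psi^-_*$ carries $\psi^+_*(\Phi)$ to the correct identification $\psi^+_*(E^+|_{U_{Y^+}})\to\psi^-_*(E^-|_{U_{Y^-}})$.
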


\begin{proof} Work in the set up of Theorem \ref{thm.1.2}(c).
Suppose that $\Phi\circ \rho^+\vert_{U^+ \setminus K^+}=\phi^*\rho^-\vert_{U^+ \setminus K^+}$ holds
for $K^+ \subset U^+$ compact.
Enlarging $K^+$ within $U^+$ to $\check K^+$ which is the closure of an open subset of $U^+,$ we can choose a smooth morphism $s^+\colon \underline\C^{m-1}\to E^+$ on $X^+$ with $\smash{s^+\vert_{X^+\sm\check K^+}=\rho^+\vert_{\underline\C^{m-1}\vert_{X^+\sm\check K^+}}},$ such that $s^+$ is generic in the interior of~$\check K^+.$ 

As in Definition \ref{dfn.4.7}, let $Y^+$ be the degeneracy locus of $s^+,$ and construct a $\SU(m)$-framing $\check\rho^+:\ul\C^m\vert_{X^+\sm Y^+}\ra E^+\vert_{X^+\sm Y^+}$ from $s^+\vert_{X^+\sm Y^+}.$ This satisfies $\check\rho^+\vert_{X^+\sm\check K^+}=\rho^+\vert_{X^+\sm\check K^+}$ as $s^+\vert_{X^+\sm\check K^+}=\rho^+\vert_{\underline\C^{m-1}\vert_{X^+\sm\check K^+}}.$ Choose an embedding $\io^+:Y^+\hookra\cS^7,$ an isomorphism of normal bundles ${\Psi^+:N_{Y^+}\ra \io^{+*}(N_{Y'})}$ for $Y'=\io^+(Y^+),$ tubular neighbourhoods $\check U^+,U'$ of $Y^+,Y'$ in $X^+,\cS^7$ with $\check U^+\subseteq U^+,$ and a spin diffeomorphism $\psi^+:\check U^+\ra U'$ with $\psi^+\vert_{Y^+}=\io^+$ and $\d\psi^+\vert_{N_{Y^+}}=\Psi^+.$ As in Definition~\ref{dfn.4.7} we get from these a vector bundle $E'\ra\cS^7$ with $\SU(m)$\nobreakdash-structure, isomorphism $\Xi^+\colon E^+\vert_{\check U^+}\ra \psi^{+*}(E'\vert_{U'})$ and $\SU(m)$-framing ${\rho':\ul{\C}^m\vert_{\cS^7\sm Y'}\,{\buildrel\cong\over\longra}\,\ab E'\vert_{\cS^7\sm Y'}}.$

Using the isomorphisms $\phi:U^+\ra U^-$ and $\Phi\colon E^+|_{U^+} \to \phi^*(E^-|_{U^-}),$ we can transport $\check K^+,Y^+,\check\rho^+,\io^+,\Psi^+,\check U^+,\psi^+,\Xi^+$ to $\check K^-,\ldots,\Xi^-$ on $X^-$ with 
\begin{gather}
\check K^-\!=\!\phi(\check K^+), \;
Y^-\!=\!\phi(Y^+), \; \check\rho^-\vert_{X^-\sm\check K^-}\!=\!\rho^-\vert_{X^-\sm\check K^-}, \; \check\rho^-\vert_{U^-\sm Y^-}\!=\!\phi_*(\check\rho^+),
\nonumber\\
\io^-=\io^+\ci\phi\vert_{Y^+}^{-1},\;\> \Psi^-=\Psi^+\ci\d\phi\vert_{N_{Y^+}}^{-1},\;\>
\check U^-=\phi(\check U^+), \;\> \psi^-=\psi^+\ci\phi\vert^{-1}_{\check U^+}.
\label{equation.4.6}
\end{gather}
Note that the data $Y',N_{Y'},U',E',\rho'$ on $\cS^7$ is the same in both $+,-$ cases. Then as in \S\ref{subsubsection.4(B)(v)} we have
\begin{equation}
\label{equation.4.7}
o^{F^\pm}(E^\pm)=o^{F^\pm}_{Y^\pm,\check\rho^\pm,\io^\pm,\Psi^\pm}(E^\pm)\in\Or_{E^\pm}.
\end{equation}

We now have
\begin{align*}
&\Or(\phi,\Phi,\rho^+,\rho^-)\bigl(o^{F^+}(E^+)\bigr)=\bigl(F^+\vert_{\check U^+}/\psi^{+*}(F_{\cS^7}\vert_{U'})\bigr)[Y^+]\cdot{} \\
&\qquad\qquad\qquad \Or(\phi,\Phi,\rho^+,\rho^-)\ci\Or(\psi^+,\Xi^+,\check\rho^+,\rho')^{-1}(o^\mathrm{flat}(E'))\\
&=\bigl(F^+\vert_{\check U^+}/\phi\vert_{\check U^+}^*(F^-\vert_{\check U^-})\bigr)[Y^+]\cdot\bigl(F^-\vert_{\check U^-}/\psi^{-*}(F_{\cS^7}\vert_{U'})\bigr)[Y^-]\cdot{}\\
&\qquad\qquad
\Or(\psi^-,\Xi^-,\check\rho^-,\rho')^{-1}(o^\mathrm{flat}(E'))\\
&=\bigl(F^+\vert_{U^+}/\phi^*(F^-\vert_{U^-})\bigr)(\al^+)\cdot o^{F^-}(E^-),
\end{align*}
using \eqref{equation.4.2} and \eqref{equation.4.7} in the first step, \eqref{equation.4.6} and functoriality of $\Or(-)$ and $F'/F$ in the second, and using \eqref{equation.4.2} and \eqref{equation.4.7} and writing $\al^+=[Y^+]$ in $H_3(U^+;\Z)$ in the third. Since $\al^+$ is Poincar\'e dual to $c_2(E^+\vert_{U^+},\rho^+)\in H^4_{\mathrm{cpt}}(U^+;\Z)$ as in Definition \ref{dfn.4.7}, this proves~\eqref{equation.1.7}.
\end{proof}

To check assertion (ii) in Theorem \ref{thm.1.2}, regarding families, let $E \to X \times P$ be a $\SU(m)$-bundle. By compactness of $X$ each $p_0 \in P$ has an open neighbourhood $P_0$ with $E|_{X\times P_0} \cong E|_{X\times \{p_0\}} \times P_0.$ By \eqref{equation.1.7} we have $o^F(E|_{X\times \{p_0\}}) \cong o^F(E|_{X\times \{p\}})$ for every $p\in P_0$ under the excision isomorphism, which depends continuously on $p.$ This completes the proof of the first part of Theorem \ref{thm.1.2}, on $\SU(m)$-bundles.

\subsection{\texorpdfstring{Extension to $\U(m)$-bundles}{Extension to U(m)-bundles}}

Finally we extend Theorem \ref{thm.1.2} to $\U(m)$-bundles. Clearly the canonical orientations $o^F(E)$ for $\U(m)$-bundles $E\ra X$ are well-defined. They also satisfy Theorem \ref{thm.1.2}(a)--(c) and (ii), since mapping the $\U(m)$-bundle $E$ to the $\SU(m+1)$-bundle $\ti E=E\op\La^mE^*$ commutes with all the operations in (a)--(c) and~(ii).

\hbadness 10000\relax

\medskip

\noindent{\small\sc D.~Joyce, The Mathematical Institute, Radcliffe
Observatory Quarter, Woodstock Road, Oxford, OX2 6GG, U.K. E-mail: {\tt joyce@maths.ox.ac.uk}.
\smallskip

\noindent M.~Upmeier, Department of Mathematics, University of Aberdeen, Fraser Noble Building, Elphinstone Rd, Aberdeen, AB24 3UE, U.K.\\E-mail: {\tt markus.upmeier@abdn.ac.uk.}}

\end{document}